\numberwithin{equation}{section}
   \newtheorem{theorem}{Theorem}[section]
    \newtheorem{lemma}[theorem]{Lemma}
   \newtheorem{proposition}[theorem]{Proposition}
   \newtheorem{rem}[theorem]{Remark}
   \newtheorem{conjecture}[theorem]{Conjecture}
\renewcommand{\tilde}{\widetilde}
\renewcommand{\hat}{\widehat}
\newcommand{\tf}{\textsc{f}}
\newcommand{\cc}{\complement}
\newcommand{\cf}{\mathrm{f}}
\newcommand{\lint}{\llbracket}
\newcommand{\rint}{\rrbracket}
\newcommand{\cA}{\ensuremath{\mathcal A}}
\newcommand{\cN}{\ensuremath{\mathcal N}}
\newcommand{\cT}{\ensuremath{\mathcal T}}
\newcommand{\bbE}{{\ensuremath{\mathbb E}} }
\newcommand{\bbN}{{\ensuremath{\mathbb N}} } 
\newcommand{\bbP}{{\ensuremath{\mathbb P}} } 
\newcommand{\bbR}{{\ensuremath{\mathbb R}} }
\newcommand{\bbZ}{{\ensuremath{\mathbb Z}} } 
\newcommand{\bt}{{\mathbf t}}
\newcommand{\ga}{\alpha}
\newcommand{\gb}{\beta}
\newcommand{\gga}{\gamma}            % \gg already exists...
\newcommand{\gd}{\delta}
\newcommand{\gep}{\varepsilon}       % \ge already exists...
\newcommand{\gG}{\Gamma}
\newcommand{\go}{\omega}
\newcommand{\bP}{{\ensuremath{\mathbf P}} }
\newcommand{\bE}{{\ensuremath{\mathbf E}} }
\newcommand{\bbbP}{\mathbb{P}}
\newcommand{\E}{\mathbb{E}}
\newcommand{\N}{\mathbb{N}}
\newcommand{\ind}{\mathbf{1}}
\newcommand{\tZ}{\tilde{Z}}
\DeclareMathSymbol{\leqslant}{\mathalpha}{AMSa}{"36} % nicer `smaller or equal'
\DeclareMathSymbol{\geqslant}{\mathalpha}{AMSa}{"3E} % nicer `larger or equal'
\DeclareMathSymbol{\eset}{\mathalpha}{AMSb}{"3F}     % nicer `emptyset'
\renewcommand{\leq}{\;\leqslant\;}                   % redef. of < or =
\renewcommand{\geq}{\;\geqslant\;}                   % redef. of > or =
\newcommand{\dd}{\,\text{\rm d}}             % a straight d for differentials
\newcommand{\sumtwo}[2]{\sum_{\substack{#1 \\ #2}}} % sum with 2 lines
\title[$\gamma$-stable pinning model]{Disorder relevance without Harris Criterion: \\
the case of pinning model with $\gamma$-stable environment}
\author{Hubert Lacoin}
\address{
  IMPA, Institudo de Matem\'atica Pura e Aplicada, Estrada Dona Castorina 110
Rio de Janeiro, CEP-22460-320, Brasil. 
}
\author{Julien Sohier}
\address{
Laboratoire d'Analyse et de Math\'ematiques Appliqu\'ees (LAMA), UMR 8050
Universit\'e Paris Est Cr\'eteil
61, avenue du G\'en\'eral de Gaulle, 94010 Cr\'eteil Cedex, France
}
\begin{document}

\begin{abstract}
We investigate disorder relevance for the pinning of a renewal whose inter-arrival law has tail exponent $\alpha>0$ 
when the law of the random environment is in the domain of attraction of a stable law with parameter $\gamma \in (1,2)$.
We prove that in this case, the effect of disorder is not decided by the sign of the specific heat exponent as predicted by Harris criterion but that a new criterion emerges to decide 
disorder relevance.
More precisely we show that when $\alpha>1-\gamma^{-1}$ there is a shift of the critical point at every temperature 
whereas when $\alpha< 1-\gamma^{-1}$, at high temperature the quenched and annealed critical point coincide, and the critical exponents are identical. 
\\[10pt]
2010 \textit{Mathematics Subject Classification: 60K35, 60K37, 82B27, 82B44}
\\[10pt]
  \textit{Keywords: Pinning model, disorder relevance, stable laws, Harris criterion.}
\end{abstract}
\maketitle

\section{Introduction}

 It is a common practice in Theoretical Physics to use simplified lattice models in order to study the qualitative behavior of systems with a large number of interacting components. 
A prototypical example 
   is the usual Ising model on $\bbZ^{d}$, which has been used to understand the phenomenon of ferromagnetic transition.
   The reason why these lattice models are believed to yield a fair approximation of real world phenomena is that critical phenomena such as phase transitions 
are not supposed to rely on the detailed structure of interaction and should be preserved after replacing the original system by its simpler lattice version.
 In their simplest expression
   these models display homogeneous interactions in the sense that the Hamiltonian is invariant by the lattice symmetries.

\medskip

 Since real world materials always present at least some infinitesimal lack of regularity, an important issue to assert the validity of this approach
is thus whether the qualitative behavior of a lattice system
(such as the presence of a phase transition, its order, etc...) remains the same when a small amount of irregularity is introduced. This natural 
 question leads to consider "disordered" versions of the models where the interactions are given by the realization of an ergodic field; the simplest example being 
  the case of a field of independent identically distributed random variables.

\medskip

An important step in the understanding of the influence of disorder is due to Harris \cite{cf:Harris} which introduced a celebrated criterion which allows 
to predict whether a small amount of disorder may or may not change the critical properties of the system. Harris criterion relies on the analysis of the 
specific-heat exponent near the critical point and is backed by a heuristic which relies on computing the second moment of the partition function.

\medskip

The validity of this criterion has been checked in a few cases where the homogeneous model is well understood, in particular for the random field Ising model
\cite{cf:IM, cf:AW}.
A case which has generated a rich literature in the past decade is that of pinning of a one dimensional polymer on a defect line:
a model where a renewal process with power-law tails has its law modified due to energetic interactions at renewal points
\cite{cf:DHV, cf:GT05, cf:DGLT07, cf:AZ08, cf:CdH, cf:A06, cf:T08, cf:Lmart, cf:BL}.
We refer to the monographs \cite{cf:GB, cf:GB2} for a detailed introduction to the subject.

\medskip

For this particular family of models, proofs of  disorder relevance (and by this we mean: a radical 
change in the critical properties of the system) \cite{cf:GT05, cf:DGLT07, cf:AZ08, cf:CdH} 
or irrelevance \cite{cf:A06, cf:T08, cf:Lmart} 
have been given, depending on the value 
of the exponent associated to the renewal times, all of which confirmed the validity of Harris criterion.

\medskip

The heuristics behind Harris criterion relies on controlling the asymptotic behavior of the variance 
of the partition function of the system at the critical point of the system without disorder. As a consequence, a natural question occuring is the following:  
 how could this criterion be altered if one considers systems with heavy tail environments, for which
the second moment of the partition function is infinite. We stress that this question is far from being a purely technical one; 
indeed, heavy tail of the environment is likely to create greater fluctuations of the thermodynamics quantities around their
average value, and this could in principle alter the validity of the criterion.

\medskip

 In the particular case of the pinning model detailed heuristic second moment computations were performed in \cite{cf:DHV} to predict whether disorder is relevant, and 
 in the relevant case in which way  the critical point is shifted.
These results have since been made rigorous, and in particular the papers which prove lower-bound results
on the free energy \cite{cf:A06, cf:T08, cf:Lmart} all rely on controlling the second moment of a partition function.

\medskip

In the present paper, we study the disordered pinning model in the case where the environment is IID but with a distribution belonging to the domain of attraction of a 
stable law with parameter $\gamma \in (1,2)$; this entails in particular that the disorder has a first moment but an infinite second moment. 
Our aim is to show that in that case, the model falls into a different universality class and that the original 
formulation of the Harris criterion is not valid anymore. We present and prove an alternative formulation of the criterion in that case, 
which we believe should hold for a wide class of disordered systems.

 \section{Model and results}

   \subsection{Setup}

  Let $\tau = (\tau_{n})_{n \geq 0}$ be a recurrent integer valued renewal process, 
  that is a random sequence whose increments $(\tau_{n+1}-\tau_{n})$ are independent 
   identically distributed (IID) positive integers. We denote by $\bP$ the associated probability distribution. 
   We assume that $\tau_{0} = 0$, and that the inter-arrival distribution is power-law or more precisely that it satisfies  
  \begin{equation}\label{Kt}
   K(n) :=  \bP[\tau_{1} = n] \stackrel{n\to \infty}{\sim} C_K n^{-(1+\ga)}, \ga \in (0,\infty),
  \end{equation} 
where $C_K>0$ is an arbitrary constant. 
Note that $\tau$ can alternatively be considered as an infinite
   subset of $\bbN$ and in our notation, $\{ n\in \tau \}$ is equivalent to $\{ \exists k\in \bbN, \ \tau_k=n\}$.

 \medskip

We consider a sequence of IID random variables $(\go_{n})_{n\ge 0}$ and denote its law by $\bbP$. 
For our main results to hold, we will make some specific assumptions on the distribution of the $\go$'s \eqref{defgamma}, but 
for the sake of this introduction we will only assume that
\begin{equation}\label{star}
\bbP[\go_1\ge -1]=1 \quad \text{ and } \E[\go_1]=0.
\end{equation}
    Given $\gb  \in [0,1)$, $h \in \bbR$, and $N  \in \N$, we define 
    a modified renewal measure $\bP^{\gb,\go}_{N,h}$ whose 
    Radon-Nikodym derivative with respect to $\bP$ is given by 
   \begin{equation}\label{defmod}
    \frac{\dd\bP^{\gb,\go}_{N,h}}{\dd \bP}(\tau) = \frac{1}{ Z^{\gb,\go}_{N,h}} \left( \prod_{n\in \tau \cap [1,N]} e^h(\gb\go_n+1) \right) \delta_N   
   \end{equation} 
    where $\gd_{n} := \ind_{\{n \in \tau\}}$ and 
    \begin{equation}\label{partfunc}
     Z^{\gb,\go}_{N,h} = \bE\left[ \left( \prod_{n\in \tau \cap [1,N]}  \!\!\!\!\! e^h(\gb\go_n+1) \right) \delta_N \right] = \bE\left[\prod_{n=1}^N  \left(1+ [e^h(\gb\go_n+1)-1]\gd_{n}\right)\delta_N \right]
    \end{equation} 
    is the partition function.

    \noindent In the case $\gb=0$, we retrieve the homogeneous pinning model where 
    \begin{equation}
        \frac{\dd\bP_{N,h}}{\dd \bP}(\tau):=\frac{1}{Z_{N,h}}e^{h\sum_{n=1}^N \delta_n}\delta_N  
        \quad \text{and} \quad Z_{N,h}:=\bE\left[ e^{h\sum_{n=1}^N \delta_n}\delta_N \right].
    \end{equation}
    Note that we assumed that the renewal is recurrent, that is 
    $$\bP[\tau_{1} = \infty] :=1-\sum_{n=1}^{\infty} K(n)=0.$$ 
     It is a classic observation (see the e.g.\  \cite[Remark 1.19]{cf:GB}) that this yields no loss of generality: 
 in the case where $\sum_{n=1}^{\infty} K(n)<1$, the definition of the partition function $Z^{\gb,\go}_{N,h}$ is unchanged if the renewal is replaced by 
 a recurrent one with inter-arrival law given by $K(n)/(\sum_{m=1}^{\infty} K(m))$
 and $h$ is replaced by $h+\log (\sum_{m=1}^{\infty} K(m))$.

\medskip 
 
 \begin{rem}\label{laremark}
 The expression that we gave for the partition function of the disordered system differs substantially from the one usually found in the literature.
However, if we set $\tilde \go^{\gb}_n:= \log (1+\gb \go_{n})$ we can rewrite it in a more usual way
\begin{equation}\label{exponota}
  Z^{\gb,\go}_{N,h} = \bE\left[ \exp\left( \sum_{n=1}^N (\tilde \go^{\gb}_n+h) \delta_n \right) \right].
\end{equation}
Our reason for using a different notation is explained in Section \ref{stabloblo}.
\end{rem}
\subsection{Free energy and comparison with the annealed model.} 
 
  An important quantity that encodes a lot of information about the asymptotic behavior (by this we mean in the limit when $N$ becomes large) of  the renewal $\tau$ under 
  the measure $\bP^{\gb,\go}_{N,h}$ is the free energy per monomer, which is defined as the asymptotic growth rate of the partition function
    \begin{equation}
    \tf(\gb,h) := \lim_{N \to \infty} \frac{1}{N} 
    \log Z^{\gb,\go}_{N,h} \stackrel{\bbbP- a.s.}{=} \lim_{N \to \infty} \tfrac{1}{N} 
    \E\left[ \log Z^{\gb,\go}_{N,h} \right]<\infty.
    \end{equation}  
  The existence and the non--randomness of the limit is a well established fact, we refer to \cite[Theorem 4.1]{cf:GB} for a proof. 
   
   \medskip
   
 The reader can check that 
 $\tf(\gb,h)$ is non-negative, and that $h\mapsto \tf(\gb,h)$ is non-decreasing and convex (as a limit of non decreasing convex functions). 
By exchanging  limit and  derivative, as allowed by convexity, we obtain that the derivative of $\tf$ w.r.t. $h$  corresponds to the asymptotic contact fraction
   \begin{equation}\label{contacts}
    \partial_h \tf(\gb,h):= \lim_{N\to \infty} \frac{1}{N}\bE^{\gb,\go}_{N,h}\left[\sum_{n=1}^N \delta_n\right],
   \end{equation}
   as soon as the derivative exists. Note that by convexity we know that  $\partial_h \tf(\gb,h)$ is defined for all but at most countably many values of $h$, 
   but more advanced results proved in \cite{cf:alea} (for $h\neq h_c(\gb)$) \and \cite{cf:GT05} (for $h=h_c(\gb)$) ascertains that the derivative
   exists everywhere except when $\gb=h=0$ and $\alpha>1$ (see \eqref{puresys}).
   
   \medskip
   
  \noindent If one sets 
  \begin{equation}
   h_c(\gb):= \inf\{ h\in \bbR \ | \  \tf(\gb,h)>0 \},
  \end{equation}
then in view of \eqref{contacts}, $h_c(\gb)$ separates a phase where the contact fraction is vanishing ($h<h_c(\gb)$, the delocalized phase),
from another where it is positive ($h>h_c(\gb)$, the localized phase). Very soft arguments exposed below are sufficient to show that 
this phase transition really occurs, that is that $h_c(\gb) \notin \{-\infty,\infty \}$.

\medskip

\noindent For the homogeneous case, the free energy $\tf(h):= \tf(0,h)$ can be computed explicitly (see \cite{cf:GB}) 
    \begin{equation}
     \tf(h)=\begin{cases}
             0 \quad &\text{ if } h\le 0,\\
             g^{-1}(h) \quad &\text{ if } h>0,
             \end{cases}
           \end{equation}
where $g$ is defined on $\bbR_+$ by
$$g(x):=-\log \left( \sum_{n=1}^{\infty} e^{-nx} K(n) \right).$$
For $\alpha\ne 1$, using some Tauberian theorems, Assumption \eqref{Kt} entails that 
            \begin{equation}\label{puresys}
             \tf(h) \stackrel{h\to 0+}{\sim} C(K)h^{\max(\ga^{-1}, 1)},
            \end{equation} 
    where $C(K)$ is an explicit (see \cite[Theorem 2.1]{cf:GB}) function of the renewal function $K$. 
    When $\alpha=1$ the same result holds with a slowly varying correction in front of the power of $h$.     We refer to the exponent $\max(\ga^{-1}, 1)$ appearing in \eqref{puresys} as the critical exponent associated to the free energy.

    \medskip

 To try to understand the behavior of the disordered pinning model, it is tempting to use comparison with the homogeneous one.
 Making use of Jensen's inequality and \eqref{star} we obtain 
       \begin{equation}\label{raging}
        \E[\log Z^{\gb,\go}_{N,h}] \leq \log \E[Z^{\gb,\go}_{N,h}] = \log(Z_{N,h}), 
       \end{equation} 
         and hence 
          \begin{equation}\label{anne}
           \tf(\gb,h) \leq \tf(h) \quad \text{ and } h_c(\gb)\ge 0.
          \end{equation} 
On the other hand, some other convexity considerations (see \cite[Proposition 5.1]{cf:GB}) yield
\begin{equation}\label{hilde}
     \tf(\gb,h) \geq \tf(h + \log \bbE[1+\gb \go_1] ) \quad \text{ and } h_c(\gb)\le - \bbE[\log(1+\gb \go_1)].
\end{equation}

\subsection{Harris criterion, second moment and stable laws} \label{stabloblo}
 
While \eqref{hilde} is never sharp (see \cite{cf:AS})
the question whether $h_{c}(\gb)$ is equal to zero is a much more subtle one and is very much related  to the question of disorder relevance:
\begin{center}
``Does the introduction of a disorder of small amplitude (small $\gb$)\\
implies a change of the critical behavior of the system ?"
\end{center}
More precisely the question can decomposed in two points:
\begin{itemize}
 \item [(A)] Does the critical point of the disordered system coincide with the one obtained after averaging 
 \eqref{raging}? ( With our conventions: is $h_c(\gb)=0$? )
  \item [(B)] If at the vicinity of the critical point we have 
$$\tf(\gb,h_c(\gb)+u) \approx u^{\nu},$$
does $\nu$ coincide with the exponent of the pure system $\max(\ga^{-1}, 1)$?
\end{itemize}
These questions received a lot of attention from the mathematical community since the publication of heuristic predictions made by Derrida \textit{et al.} \cite{cf:DHV}
based on an interpretation of the Harris criterion \cite{cf:Harris}.

\medskip

In substance, the argument in \cite{cf:DHV} is based on the following observation: if one considers the disordered system at the pure critical point $h=0$,
then the variance of the partition function diverges (exponentially with the size of the system) for any $\gb>0$ if the return exponent is larger or equal to $1/2$ 
and remains bounded for small values of $\gb$ if
$\alpha$ is strictly smaller than $1/2$. From these observations and some heuristic computations, they conclude that 
\begin{itemize}
 \item  [(1)] When $\gb$ is small and $\alpha<1/2$ the critical point $h_c(\gb)$ is equal to $0$ which is that of the pure system,
 and furthermore, the critical exponent associated to the free energy is equal  to $\alpha^{-1}$.
 \item [(2)] When $\alpha\ge 1/2$, there is a shift of the critical point ($h_c(\gb)>0$) for all values of $\gb$, 
 which is of order $\gb^{\frac{2\alpha}{2\alpha-1}}$ when $\gb$ is small.
 \end{itemize}
These predictions were confirmed when $\alpha<1/2$ \cite{cf:A06, cf:T08, cf:Lmart}, $\alpha>1/2$  \cite{cf:DGLT07,cf:AZ08} and $\alpha=1/2$
\cite{cf:GLT08, cf:BL}, in the case where $\go$ has a finite second moment.

\medskip

From the proof heuristics, one is led to believe that the assumption $\bbE[\go_n^2] < \infty$ is not only a technical detail, and that 
considering disorders with heavier tail might considerably change the picture of disorder relevance.

\medskip

In this paper we decide to consider the case where the $\go_n$'s are heavy tailed, and more precisely are in the domain of attraction of 
a $\gamma$-stable law with $\gamma\in(1,2)$.
To keep things simples we assume that there exists a constant $C_{\bbP}$ such that
\begin{equation}\label{defgamma}
    \bbP[\go_n\ge x]\stackrel{x\to \infty}{\sim} C_{\bbP}x^{-\gamma}, \quad \gamma\in (1,2).
\end{equation}
This justify our unorthodox choice for the writing of the partition function (underlined in Remark \ref{laremark}):
writing things in the usual way, we would end-up with an exponent $\gamma$ which depends on $\gb$ which would be unpractical.

\begin{rem}\label{slova}
All the results presented in this paper, would also extend to the case where one allows the presence of a slowly varying function instead of the constants $C_K$ and $C_{\bbP}$ in the tail distribution of the renewal process \eqref{Kt}
or of the environment \eqref{defgamma}.
We made the choice of a more restrictive assumption to simplify the notation. 
\end{rem}

      \subsection{Results}

Our main achievement is to show that when environment with heavier tail is considered, Harris criterion is not valid anymore:
 the value of $\alpha$ that separates disorder relevance and irrelevance is no longer equal to $1/2$ but to $1-\gamma^{-1}$.
 In all the results stated below we assume that \eqref{Kt} and \eqref{defgamma} holds.

 \medskip
 
 Our first result states that when $\alpha< [1-\gamma^{-1}]$, the critical point and critical exponent are the same as the one of the pure system. 

 \begin{theorem}\label{diso1}
 If $\alpha< [1-\gamma^{-1}]$, 
 there exists $\gb_0 > 0$ (depending on the renewal function $K$ and of the distribution of $\go$) 
 such that for all $\gb \in (0,\gb_0)$,  we have 
 $h_c(\gb)=0$ and 
\begin{equation}\label{freene}
  \lim_{h\to 0+} \frac{\log \tf(\gb,h)}{\log h}=\frac{1}{\alpha}.
\end{equation}
 \end{theorem}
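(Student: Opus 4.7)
The plan is to prove the matching lower bound $\tf(\gb,h)\ge (1-o(1))\tf(h)$ as $h\to 0+$, which together with the annealed upper bound \eqref{anne} yields simultaneously $h_c(\gb)=0$ and \eqref{freene}. The key observation is that $\alpha<1-\gamma^{-1}$ forces $\alpha<1/2$ (because $\gamma<2$), so once the environment is made bounded we fall inside the classical disorder-irrelevant regime where the second-moment method applies. The heavy tail of $\go$ will then be handled by a truncation argument whose parameter is constrained simultaneously by three requirements: the truncation level $A$ should be large enough that no truncation occurs with high probability at the correlation scale $L_h:=1/\tf(h)$, small enough for the variance bound $\mathrm{Var}(\min(\go,A))\le CA^{2-\gamma}$ (coming from \eqref{defgamma}) to yield a useful second-moment estimate, and balanced so that the annealed shift it induces is negligible. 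These three constraints are simultaneously compatible exactly when $\alpha<1-\gamma^{-1}$, with the choice $A=L_h^{1/\gamma+\eps}$ for a small $\eps>0$.

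Concretely, set $N=JL_h$ for a large fixed constant $J$ and let $\go_n^A:=\min(\go_n,A)$. Since $\go_n^A\le \go_n$ and all factors $(1+\gb\go_n)$ are positive (by \eqref{star} and $\gb<1$), one has the monotonicity $Z^{\gb,\go}_{N,h}\ge Z^{\gb,\go^A}_{N,h}$. The annealed normalization $\bbE[Z^{\gb,\go^A}_{N,h}]=Z_{N,h_A}$ with $h_A:=h+\log(1+\gb c_A)$ and $c_A:=-\bbE[(\go-A)_+]\sim -A^{1-\gamma}$ satisfies $|h_A-h|/h\to 0$ thanks to the assumption, so that $Z_{N,h_A}=(1+o(1))Z_{N,h}$ at our scale. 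The second moment admits the classical representation
$$\frac{\bbE[(Z^{\gb,\go^A}_{N,h})^2]}{Z_{N,h_A}^2}=\bE^{\otimes 2}_{N,h_A}\Bigl[(1+c\gb^2 A^{2-\gamma})^{|\tau\cap\tau'\cap[1,N]|}\Bigr],$$
and at the chosen scale the quantity $\gb^2 A^{2-\gamma}\sum_{k=1}^N \bP_{N,h_A}(k\in\tau)^2$ equals a positive power of $h$, namely $\sim h^{2(\gamma(1-\alpha)-1)/(\alpha\gamma)}$, which vanishes as $h\to 0$. Since $\alpha<1/2$, the intersection $|\tau\cap\tau'|$ has finite exponential moments under the pinning measure for small enough exponents, uniformly in $N$, so the ratio above converges to $1$; Chebyshev's inequality then gives $\bbbP(Z^{\gb,\go^A}_{N,h}\ge \tfrac12 Z_{N,h_A})\to 1$. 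Combined with $\bbbP(\forall n\in[1,N]:\go_n\le A)\ge 1-L_h^{-\eps\gamma}$, we conclude that on an event $\cA_h$ with $\bbbP(\cA_h^\compl)=O(L_h^{-c})$ for some $c>0$, we have $Z^{\gb,\go}_{N,h}\ge \tfrac13 Z_{N,h}$.

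On $\cA_h^\compl$ the deterministic lower bound $Z^{\gb,\go}_{N,h}\ge K(N)(1-\gb)$ (obtained by restricting $\tau$ to $\{N\}$) yields $\log Z^{\gb,\go}_{N,h}\ge -C\log N$; since $\bbbP(\cA_h^\compl)\log N=O(L_h^{-c}\log L_h)=o(1)$ while $\log Z_{N,h}\asymp J$, we deduce $\bbE[\log Z^{\gb,\go}_{N,h}]\ge \log Z_{N,h}-\log 3-o(1)$ uniformly in $h$. Superadditivity of $N\mapsto \bbE[\log Z^{\gb,\go}_{N,h}]$ (obtained by restricting $\tau$ to contain the separation point) then gives $\tf(\gb,h)\ge \tf(h)-(\log 3+o(1))/(JL_h)$, so $\liminf_{h\to 0+}\tf(\gb,h)/\tf(h)\ge 1-(\log 3)/J$; letting $J\to\infty$ and invoking \eqref{anne} proves \eqref{freene} and $h_c(\gb)=0$. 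The main obstacle is the simultaneous fine-tuning of $A$ and the quantitative second-moment estimate at the scale $N=JL_h$: this demands sharp uniform control of the renewal intersection $\sum_k \bP_{N,h}(k\in\tau)^2$ at the critical scale (including boundary effects) and the upgrade from mean to exponential moment that exploits $\alpha<1/2$. The strict inequality $\gamma(1-\alpha)>1$ is the quantitative incarnation of $\alpha<1-\gamma^{-1}$ and is exactly what reconciles the three competing demands on $A$.
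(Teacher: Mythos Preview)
Your approach has a genuine gap at the second-moment step. You choose the truncation level $A=L_h^{1/\gamma+\eps}$, which makes the effective coupling in the two-replica representation equal to $\chi:=\log\bigl(1+\gb^2\mathrm{Var}(\hat\go^A)/(1+\gb\bbE[\hat\go^A])^2\bigr)\sim \gb^2 A^{2-\gamma}\to\infty$ as $h\to 0$. You then claim that $\gb^2 A^{2-\gamma}\sum_{k=1}^N \bP_{N,h_A}(k\in\tau)^2$ is a positive power of $h$; the exponent you display, $2(\gamma(1-\alpha)-1)/(\alpha\gamma)$, comes from evaluating the sum as $N^{2\alpha-1}$. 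But that asymptotic is only correct for $\alpha\ge 1/2$. In your regime $\alpha<1/2$ the sum $\sum_{k\le N}\bP_{N,h_A}(k\in\tau)^2$ is bounded \emph{below} by a positive constant (contributions from $k=O(1)$ already give $\sum_k u(k)^2<\infty$ but $>0$), so the product does not tend to zero. More fundamentally, the ratio $\bbE[(Z^A_{N,h})^2]/Z_{N,h_A}^2=\bE_{N,h_A}^{\otimes 2}[(1+\chi)^{|\tau\cap\tau'\cap[1,N]|}]$ is a true exponential moment: for $\alpha<1/2$ the intersection renewal is transient and $|\tau\cap\tau'|$ has only \emph{bounded} exponential moments (the threshold being determined by $\bP^{\otimes2}[\tilde\tau_1<\infty]$). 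Since $\chi\to\infty$, this exponential moment diverges, so the Paley--Zygmund/Chebyshev step fails and you cannot conclude $\bbP(Z^A_{N,h}\ge \tfrac12 Z_{N,h_A})\to 1$.

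The three constraints you list on $A$ are in fact \emph{not} simultaneously satisfiable: ``no truncation at scale $L_h$'' forces $A\gg L_h^{1/\gamma}\to\infty$, while ``second moment controlled'' forces $\gb^2 A^{2-\gamma}$ bounded, hence $A$ bounded. This incompatibility is precisely why the paper abandons second moments altogether. Its proof (Proposition~\ref{fractionalmoment}) controls a \emph{fractional} moment $\bbE[(Z^{\gb,\go,\cf}_{N,0})^p]$ with $p\in(\tfrac{1}{1-\alpha},\gamma)$, for which no truncation is needed. The key technical device is a size-biased rewriting $\bbE[Z^p]=\bE'\tilde\bbE[(Z[\tau'])^{p-1}]$ followed by a coarse-graining decomposition along long gaps of $\tau\cap\tau'$ and a change of measure; this produces a uniform-in-$N$ bound and then Lemma~\ref{simplifiex} converts it into a free-energy lower bound. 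Your truncation-plus-second-moment route, as written, cannot reach $h_c(\gb)=0$.
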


Our second result deals with the relevant disorder case: we show that  when  $\alpha>[1-\gamma^{-1}]$ 
the critical point is shifted for every value of $\gb$.

 \begin{theorem}\label{diso}
 If $\alpha>[1-\gamma^{-1}]$, then for all $\gb < \gb_0$ we have $h_c(\gb)>0$.
 Furthermore we have the following estimate on the critical point shift 
 \begin{equation}\label{shiftee}
  \lim_{\gb\to 0+} \frac{\log h_c(\gb)}{(\log \gb)}=\begin{cases} \frac{\alpha\gamma}{(\alpha-1)\gamma+1}, \quad & \text{if } \alpha\in (1-\gamma^{-1},1) \\
  \gamma  \quad &\text{if } \alpha\ge 1.                                                   \end{cases}
\end{equation}
     \end{theorem}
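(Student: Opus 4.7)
The plan is to produce matching asymptotic upper and lower bounds on $h_c(\gb)$ for $\gb$ small: $c\gb^{\kappa}\leq h_c(\gb)\leq C\gb^{\kappa}$, where $\kappa:=\frac{\ga\gga}{(\ga-1)\gga+1}$ if $\ga\in(1-\gga^{-1},1)$ and $\kappa:=\gga$ if $\ga\geq 1$. Together these will yield both $h_c(\gb)>0$ and the logarithmic equivalent \eqref{shiftee}. As a consistency check, the formal substitution $\gga=2$ in $\kappa$ recovers the classical Harris exponent $2\ga/(2\ga-1)$, and $\kappa\to\infty$ as $\ga\downarrow 1-\gga^{-1}$, matching the threshold in Theorem \ref{diso1}. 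Moreover, the trivial upper bound $h_c(\gb)\leq C\gb^{\gga}$ read off from \eqref{hilde} is already sharp when $\ga\geq 1$ (where $\kappa=\gga$), but strictly weaker than what is claimed when $\ga\in(1-\gga^{-1},1)$ (where $\kappa>\gga$); so most of the work is concentrated in that latter regime.

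\textbf{Lower bound $h_c(\gb)\geq c\gb^{\kappa}$ (fractional moment plus coarse graining).} I would prove that $\tf(\gb,h)=0$ whenever $h\leq c\gb^{\kappa}$. Since $\bbE[\go_1^2]=\infty$, the second moment method is unavailable, so I would bound $\bbE\bigl[(Z^{\gb,\go}_{N,h})^{\theta}\bigr]$ for some $\theta\in(0,1)$ with $\theta<\gga$, which keeps $\bbE[(1+\gb\go_1)^{\theta}]$ finite. Cut $[1,N]$ into blocks of length $\ell$ of the order of the pure-system correlation length at level $h$ (so $\ell\sim h^{-1/\ga}$ for $\ga<1$ and $\ell\sim h^{-1}$ for $\ga\geq 1$), decompose $Z^{\gb,\go}_{N,h}$ as a sum over the set of visited blocks, apply the fractional moment inequality $(\sum a_i)^{\theta}\leq \sum a_i^{\theta}$, and reduce to a defective-renewal computation whose kernel contains a single-block fractional moment. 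To estimate this block moment under heavy-tailed $\go$ I would use a two-scale decomposition: split $\go=\go^{\leq T}+\go^{>T}$ at threshold $T\sim \ell^{1/\gga}$, perform a tilting (change of measure) only on the bounded part $\go^{\leq T}$ in the spirit of \cite{cf:DGLT07,cf:GLT08}, and control the rare-atom contribution by the tail bound \eqref{defgamma}. The balance between the finite cost of tilting the bounded part and the polynomial cost of producing a large atom is precisely what forces the exponent $\kappa$.

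\textbf{Upper bound $h_c(\gb)\leq C\gb^{\kappa}$ (energetic strategy on rare atoms).} I would show $\tf(\gb,h)>0$ for $h\geq C\gb^{\kappa}$. When $\ga\geq 1$, \eqref{hilde} already gives $h_c(\gb)\leq C\gb^{\gga}=C\gb^{\kappa}$, so no further work is needed. The substantial case is $\ga\in(1-\gga^{-1},1)$, where \eqref{hilde} is strictly too weak and one must exhibit a trajectory strategy exploiting exceptional atoms. Fix a scale $\ell=\ell(\gb,h)$ and a threshold $M\sim \ell^{1/\gga}$ so that each block of length $\ell$ contains, with probability bounded away from $0$, at least one site with $\go\geq M$; restrict the renewal to pass through one such exceptional site per successful block and lower-bound the block partition function by the corresponding restricted contribution. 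The energy gain per block is then $\log(1+\gb M)$, set against the entropic cost $(1+\ga)\log\ell$ dictated by \eqref{Kt}. Optimising $\ell$ in $\gb$ and $h$ produces a positive free energy precisely at $h\asymp \gb^{\kappa}$, with the same exponent as on the lower-bound side.

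\textbf{Main obstacle.} The crux is the fractional moment upper bound. A naive Gaussian-type tilt becomes infinitely costly against a $\gga$-stable environment: the relative entropy between $\gb\go_1$ and any non-trivial shift of itself diverges. The two-scale truncation sketched above keeps the tilt confined to the bounded piece $\go^{\leq T}$ and pays for large atoms via the tail \eqref{defgamma}, but the joint tuning of $T$ and the coarse-graining scale $\ell$ must be carried out with care — any looseness propagates into a strictly weaker exponent than $\kappa$, and matching $\kappa$ on both sides forces the rare-atom strategy to use exactly the same scales. A subsidiary (but genuine) technical point is the propagation of the single-block estimate into a bound on $\tf$ itself: one must ensure that the associated defective renewal has Malthusian parameter below one for $h\leq c\gb^{\kappa}$, which is the precise criterion that delivers the claimed exponent.
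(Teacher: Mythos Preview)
Your lower-bound sketch (fractional moments plus a change of measure handling the heavy tail at a threshold $T\sim\ell^{1/\gga}$) is in the right direction and, with care, does produce the exponent $\kappa$. The paper's implementation is somewhat different and lighter: it uses the finite-volume criterion of \cite{cf:DGLT07} (Proposition~\ref{prop:derau}) rather than a full block coarse graining, and instead of tilting the truncated part it applies a direct penalty on the \emph{presence} of large atoms, $g(\go_i)=\exp\bigl(-(\log k)^{-1}\ind_{\{\go_i\ge k^{1/\gga}\}}\bigr)$, whose effect is that $\bbE[g(\go_1)(1+\gb\go_1)]-1\sim -c\,\gb\, k^{(1-\gga)/\gga}/\log k$; this negative drift is precisely what drives the exponent.

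Your upper bound for $\ga\in(1-\gga^{-1},1)$, however, contains a genuine gap: the rare-atom energetic strategy cannot yield $\kappa$. Visiting a single atom of size $M\sim\ell^{1/\gga}$ per block gives energy at most $\log(1+\gb M)$, against an entropic cost $(1+\ga)\log\ell$ for targeting one prescribed site; since $1/\gga<1+\ga$ the cost dominates at every scale, and carrying out your optimisation only delivers $h_c(\gb)\lesssim|\log\gb|$, not $\gb^{\kappa}$. The paper proceeds by the opposite mechanism: it \emph{discards} large atoms, truncating $\go$ from above at level $N_\gb^{1/\gga}$ so that the truncated environment $\hat\go$ satisfies $Z^{\gb,\hat\go}\le Z^{\gb,\go}$ pointwise and has finite moments, and then applies the \emph{irrelevance machinery} of Section~\ref{proofirel} to $\hat\go$ (second-moment control of $Z^{\gb,\hat\go,\cf}_{N_\gb,h_\gb}$ when $\ga\ge 1/2$, $p$-th moment with $p\in(1,2)$ when $\ga<1/2$) to show that the truncated system already has positive free energy at $h_\gb:=-\log\bbE[1+\gb\hat\go_1]\asymp\gb^{\kappa}$. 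The correct upper bound thus comes from reducing to a finite-variance problem, not from exploiting rare atoms.
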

Note that in the case where $\go$ has finite second moment both \eqref{freene} and \eqref{shiftee} hold (with proofs given respectively in \cite{cf:A06, cf:T08, cf:Lmart} and 
 \cite{cf:DGLT07, cf:AZ08}) with $\gamma$ replaced by $2$.
 
 \medskip

 \begin{rem}
 We prove in fact more quantitative upper and lower bounds for \eqref{freene} and \eqref{shiftee}, but our upper and lower bounds do not match.
 We believe that for $\alpha< [1-\gamma^{-1}]$,  $\gb \in (0,\gb_0)$,  the annealed bound \eqref{anne} is asymptotically sharp, that is
 $$\tf(\gb,h)\stackrel{h\to 0+}{\sim} \tf(0,h).$$
 When $\alpha>[1-\gamma^{-1}]$ we believe, that, similarly to what occurs in the $L_2$ case \cite{cf:CTT} we have 
 $$h_c(\gb)\stackrel{\gb \to 0_+}\sim c \gb^{\frac{\alpha\gamma}{(\alpha-1)\gamma+1}}.$$
 \end{rem}

 \subsection{Open questions and conjectures}
  
  \subsubsection{The marginal case}
  
  An important observation about our results is that they do not  solve the marginal case $\alpha=[1-\gamma^{-1}]$.
  In a forthcoming companion paper \cite{cf:proceeding}, we show that when $\alpha=[1-\gamma^{-1}]$ when \eqref{Kt} is satisfied disorder is also relevant,
  a result that bears some similarity with that proved in \cite{cf:GLT08}. 
  
  \medskip

  However this does not completely solves the problem of disorder relevance:
  for instance when \eqref{defgamma} holds, we would like to find a necessary and sufficient condition on $K$ (assuming only regular variations) 
  for the occurrence of 
  a critical point shift at every temperature similar to the one proved in \cite{cf:BL} under the assumption of second moment.
  
  \medskip
  
  Heuristic computations suggest the following picture:
  
  \begin{conjecture}\label{c:conjectos}
  Assuming that Assumption \eqref{defgamma} holds and that the inter-arrival law $K(\cdot)$ is regularly varying, the following equivalence holds
    \begin{equation}\label{conjectos}
   \left\{ \forall \gb>0, \ h_c(\gb)>0 \right\} \ \Leftrightarrow  \ \left\{ \sum_{n\ge 1} \bP[n\in \tau]^{\gamma} \right\}. 
 \end{equation}
  \end{conjecture}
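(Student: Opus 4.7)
I interpret the right-hand side of \eqref{conjectos} as $\{S_\tau = \infty\}$ where $S_\tau := \sum_{n \geq 1} \bP[n \in \tau]^{\gamma}$. Under the assumed regular variation of $K$, the renewal theorem ensures that $\bP[n \in \tau]$ is regularly varying with index $\alpha - 1$ when $\alpha < 1$ and converges to $1/\bE[\tau_1]$ when $\alpha > 1$, so $S_\tau < \infty$ amounts to $\alpha < 1 - \gamma^{-1}$ in the strict regime, with a slowly varying subtlety at the boundary. This shows compatibility with Theorems \ref{diso1} and \ref{diso}; the conjecture thus plays in the $\gamma$-stable setting the role that the $\sum_n \bP[n \in \tau]^2 < \infty$ criterion of \cite{cf:BL} plays in the $L^2$ framework.

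For the implication $S_\tau < \infty \Rightarrow h_c(\gb) = 0$ at high temperature, my plan is to extend the martingale argument underlying Theorem \ref{diso1}. Fix $p \in (1, \gamma)$ close to $\gamma$, so that $\bbE[|\go_1|^p] < \infty$ by \eqref{defgamma}, and apply the Von~Bahr--Esseen inequality
\[
\bbE\Big|\sum_n Y_n\Big|^p \leq C_p \sum_n \bbE|Y_n|^p
\]
to the martingale increments produced by revealing $(\go_n)_{n \leq N}$ one at a time in $Z^{\gb,\go}_{N,h}/\bbE[Z^{\gb,\go}_{N,h}]$. The dominant term in the resulting bound is a series comparable to $\sum_n \bP[n \in \tau]^p$, which, thanks to the regular variation of $\bP[\cdot \in \tau]$, is finite as soon as $S_\tau < \infty$ and $p$ is chosen close enough to $\gamma$ (outside the slowly varying marginal regime). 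Uniform $L^p$-boundedness of the normalised partition function then yields $\tf(\gb, h) > 0$ for every $h > 0$ via a Paley--Zygmund-type estimate applied after coarse-graining, as in \cite{cf:Lmart}.

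For the converse $S_\tau = \infty \Rightarrow h_c(\gb) > 0$, my plan is to run a coarse-graining plus fractional moment plus change of measure scheme as in \cite{cf:DGLT07, cf:AZ08, cf:BL}. Fix $q \in (0,1)$ and a block length $\ell$, and replace $\bbP$ by a tilted law $\widetilde\bbP$ with Radon--Nikodym density $\prod_{n=1}^\ell \phi_n(\go_n)$, each $\phi_n$ implementing a site-dependent downward shift of $\go_n$ whose amplitude is tuned to the weight $\bP[n \in \tau]$, truncated to keep the Radon--Nikodym moment $\bbE[(\dd\bbP/\dd\widetilde\bbP)^{q/(1-q)}]$ finite under \eqref{defgamma}. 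Hölder's inequality gives
\[
\bbE\big[(Z^{\gb,\go}_{\ell,h})^q\big] \leq \bbE\Big[\big(\tfrac{\dd\bbP}{\dd\widetilde\bbP}\big)^{q/(1-q)}\Big]^{1-q} \widetilde\bbE\big[Z^{\gb,\go}_{\ell,h}\big]^q,
\]
where the cost factor is controlled by $\sum_{n=1}^\ell \bP[n \in \tau]^\gamma$ since this is the natural scaling of the Radon--Nikodym moment for small shifts under $\gamma$-stable tails, while the gain factor is a tilted annealed partition function whose effective pinning field has been reduced on every renewal site. When $S_\tau = \infty$, the block length $\ell$ can be chosen large enough that this reduction overcomes any fixed $h > 0$; the resulting fractional moment estimate then feeds into the standard coarse-graining machinery to give $\tf(\gb, h) = 0$.

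The main obstacle, and what makes the conjecture genuinely harder than Theorems \ref{diso1} and \ref{diso} or the specific marginal case treated in \cite{cf:proceeding}, is the slow-divergence regime of $S_\tau$. In that regime the homogeneous site-independent tilt above is too weak to produce a useful reduction of the pinning field, and one must implement an inhomogeneous change of measure correlating the shifts across the block, in the spirit of \cite{cf:GLT08, cf:BL}. Adapting such a construction to a $\gamma$-stable environment is delicate because the likelihood ratio of a correlated inhomogeneous tilt has heavy tails and infinite $\gamma$-th moment, so the $L^2$ concentration estimates of \cite{cf:BL} must be replaced by truncation plus fractional moment bounds, and the truncation scale, the tilt profile and the coarse-graining length all have to be tuned simultaneously against the stable fluctuations. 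This simultaneous calibration is what I expect to be the technical heart of the proof.
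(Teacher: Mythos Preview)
The statement you are addressing is a \emph{conjecture} in the paper, not a proved result; there is no proof to compare your proposal against. The authors explicitly leave both implications open, remarking that the direction $\{S_\tau=\infty\}\Rightarrow\{h_c(\gb)>0\ \forall\gb\}$ looks accessible by adapting the techniques of \cite{cf:BL}, whereas the reverse direction ``seems to be much more challenging with the techniques we have at hand.'' Note that this is the opposite of your own assessment: you locate the main obstacle in the slow-divergence relevance regime, while the authors regard the irrelevance direction as the harder one.

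On the irrelevance side, your Von~Bahr--Esseen plan has a circularity that you should be aware of. Writing $W_N=Z^{\gb,\go,\cf}_{N,0}$ and revealing the $\go_n$ one at a time, the martingale increment at step $n$ equals $\gb\go_n\,Y_n$ with
\[
Y_n=\bE\Big[\ind_{\{n\in\tau\}}\prod_{m\in\tau\cap[1,n-1]}(1+\gb\go_m)\Big],
\]
which is itself a constrained partition function of size $n$. Von~Bahr--Esseen gives $\bbE|W_N-1|^p\le C_p\gb^p\,\bbE|\go_1|^p\sum_{n\le N}\bbE[Y_n^p]$, but to conclude that this sum is comparable to $\sum_n u(n)^p$ you need $\sup_n\bbE[(Y_n/u(n))^p]<\infty$, i.e.\ exactly the uniform $L^p$ bound on (pinned) partition functions you are trying to establish. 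The paper's proof of the strict case $\alpha<1-\gamma^{-1}$ (Proposition~\ref{fractionalmoment}) circumvents this by a substantially different mechanism: a size-biasing/spine representation, a coarse-graining decomposition of $Z^{\gb,\tilde\go}[\tau']$ according to long gaps in $\tau\cap\tau'$, and a change of measure on the tilted environment. Whether that machinery extends to the full regularly varying marginal regime is precisely what the authors leave open, and your sketch does not address it.

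Your plan for the relevance direction (fractional moments, coarse graining, inhomogeneous tilt tuned to $u(n)$ with truncation to handle heavy tails) is reasonable as a programme and is in line with what the paper suggests; but as you yourself note, carrying it through in the slow-divergence regime would require new ideas beyond \cite{cf:BL}, and nothing in your outline resolves that.
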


While it seems plausible that, with a consequent amount of work, the techniques developed in \cite{cf:BL} could be adapted to prove one 
side of the implication (that is, that the proposition on the r.h.s.\ implies that on the l.h.s.\ ), 
the other direction seems to be much more challenging with the techniques we have at hand.

\subsection{Smoothing of the phase transition}

It is a general paradigm that the presence of disorder tends to make the free energy curve smoother at the vicinity of the critical point (see e.g. \cite{cf:AW} for a 
celebrated result of this kind for the random field Ising model).
For disordered pinning models in particular the first result of this type was proved in \cite{cf:GT05} and generalized in \cite{cf:CdH}.
This last generalization applies without restriction to our setup and we have \cite[Theorem 1.9 ]{cf:CdH}, for every $\gb>0$ (and every $\gamma$ and $\alpha$),
there exists a constant $C_{\gb}>0$ such that for all $u\in[0,1]$, 
\begin{equation}
 F(\gb,h_c(\gb)+u)\le C_{\gb} u^2.
\end{equation}
There are various reasons to believe that an heavy-tailed environment should make the free energy curve even smoother than quadratic at criticality.
More precisely
  \begin{conjecture}\label{supersmooth}
Assuming that \eqref{Kt} and \eqref{defgamma} holds, for every $\gb>0$ 
there exists $C_{\gb}>0$ such that for all
$u\in[0,1]$,
    \begin{equation}\label{conjectos2}
  F(\gb,h_c(\gb)+u)\le C_{\gb} u^{\frac{\gamma}{\gamma-1}}.
 \end{equation}
  \end{conjecture}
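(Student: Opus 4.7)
The natural strategy is a truncation of the disorder combined with the existing quadratic smoothing of \cite[Theorem 1.9]{cf:CdH}. For $T>0$ let $\go_n^{(T)}:=\go_n\wedge T$, which by \eqref{defgamma} has $\E[(\go^{(T)})^2]\asymp T^{2-\gamma}$. Applied to the truncated system, and after tracking the dependence on $\var(\go^{(T)})$ in the proof of \cite{cf:CdH}, the quadratic smoothing bound should give
\begin{equation*}
    F^{(T)}(\gb,\, h_c^{(T)}(\gb)+u) \;\le\; \frac{C\,u^2}{T^{2-\gamma}},
\end{equation*}
where the superscript $(T)$ denotes objects computed with the truncated environment. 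On the other hand, a direct comparison of the two partition functions (Jensen plus the estimate $\E[\log(1+\gb\go)\mathbf{1}_{\{\go>T\}}]\asymp T^{-\gamma}$) yields an inequality of the form $F(\gb, h)\le F^{(T)}(\gb, h) + C T^{-\gamma}$. Combining the two and optimising $T$ via the balance $u^2/T^{2-\gamma}\asymp T^{-\gamma}$, i.e.\ $T\asymp u^{-1/(\gamma-1)}$, delivers exactly $F \le C_\gb u^{\gamma/(\gamma-1)}$. The fact that this optimisation reproduces the conjectured exponent is reassuring, and points to this truncation route rather than a direct adaptation of the Giacomin--Toninelli rare-stretch method of \cite{cf:GT05} (whose Gaussian shift cost has no analogue for stable $\go$).

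Executed concretely the plan splits into three steps. \emph{Step 1}: extract from \cite[Theorem 1.9]{cf:CdH} the precise dependence on the second moment of the environment, to obtain $F^{(T)}(\gb, h_c^{(T)}(\gb)+u)\le C u^2/\var(\go^{(T)})$; this is essentially a book-keeping exercise since the argument there is already finite-variance based. \emph{Step 2}: prove the free-energy comparison $F(\gb, h)\le F^{(T)}(\gb, h)+ C T^{-\gamma}$ at a fixed value of $h$, by controlling the log-Radon--Nikodym cost of replacing $\go$ by $\go\wedge T$. \emph{Step 3}: optimise $T=T(u)$ as above.

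The main obstacle lies in reconciling Step 2 with the fact that \eqref{conjectos2} is formulated at the critical point of the \emph{untruncated} model. To align the two systems one would need a quantitative control on the critical-point shift between truncated and untruncated models of the form
\begin{equation*}
    0 \;\le\; h_c^{(T)}(\gb) - h_c(\gb) \;\le\; C\, T^{-\gamma},
\end{equation*}
and such an estimate is itself of a flavour comparable to Theorem \ref{diso}. Whether it actually holds with the correct exponent, and how to prove it without losing polynomial factors, is to our knowledge open: a direct application of the techniques of this paper yields only a substantially weaker bound (in particular, the rare-stretch coarse-graining of \cite{cf:DGLT07, cf:AZ08} adapted to heavy-tail $\go$ does not automatically detect the correct $T^{-\gamma}$ scaling). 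Resolving this shift-of-critical-point issue---perhaps through a two-scale scheme in which the extreme order statistics of $\go$ above level $T$ are analysed separately from the bulk, so that only the contribution of the latter enters the comparison---is what stands between us and a proof of \eqref{conjectos2}.
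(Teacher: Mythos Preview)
First, note that in the paper this statement is labelled as a \emph{conjecture}: the authors give only heuristic motivation and explicitly state that ``there are serious technical obstacles to transform these heuristics into a proof''. There is thus no proof in the paper to compare against; what follows is an assessment of your strategy on its own merits.

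Your route has two issues, one fatal and one a red herring.

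\medskip

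\noindent\textbf{The fatal issue is Step 1.} In this paper's parametrisation the Hamiltonian is $\sum_n (\log(1+\gb\go_n)+h)\gd_n$ (Remark~\ref{laremark}), so the disorder variable that enters the Giacomin--Toninelli / Caravenna--den Hollander rare-stretch argument linearly is $\tilde\go_n:=\log(1+\gb\go_n)$, not $\go_n$. The constant in the quadratic smoothing bound is governed by the curvature at $0$ of the Cram\'er rate function of $\tilde\go$, i.e.\ essentially by $1/\var(\tilde\go)$. Since the logarithm tames the heavy tail, $\tilde\go$ has exponential tails (indeed $\bbP[\tilde\go>y]\sim C_{\bbP}\gb^{\gamma} e^{-\gamma y}$) and in particular \emph{finite} variance; moreover $\var\bigl(\log(1+\gb\go^{(T)})\bigr)\to\var\bigl(\log(1+\gb\go)\bigr)<\infty$ as $T\to\infty$. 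So truncation does not make the smoothing constant scale like $T^{-(2-\gamma)}$: it tends to a positive limit, and the balance $u^2/T^{2-\gamma}\asymp T^{-\gamma}$ that reproduces the exponent $\gamma/(\gamma-1)$ is illusory. This is exactly why the paper already records that \cite[Theorem 1.9]{cf:CdH} applies ``without restriction to our setup'' yet still leaves \eqref{conjectos2} open.

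\medskip

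\noindent\textbf{The red herring is the critical-point shift.} The obstacle you single out is in fact not one. Since $\go^{(T)}\le\go$ pointwise, one has $Z^{(T)}\le Z$, hence $F^{(T)}\le F$ and therefore $h_c^{(T)}(\gb)\ge h_c(\gb)$. Writing $h_c(\gb)+u=h_c^{(T)}(\gb)+v$ with $v=u-(h_c^{(T)}(\gb)-h_c(\gb))\le u$, monotonicity plus the smoothing bound for the truncated system give $F^{(T)}(\gb,h_c(\gb)+u)\le C\,(v\vee 0)^2\le C\,u^2$ directly, with no need whatsoever for an upper bound on $h_c^{(T)}(\gb)-h_c(\gb)$. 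Steps~2--3 would therefore go through without the estimate you flag as missing---if only Step~1 were correct.

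\medskip

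The paper's own heuristic points elsewhere: rather than truncating and falling back on the $L^2$ machinery, one should refine the rare-stretch localisation strategy of \cite{cf:GT05,cf:CdH} to exploit extreme values of $\go$ directly (a single site with $\go_n\ge M$ contributes $\approx\log M$ to the Hamiltonian, and under \eqref{defgamma} such sites occur far more often than a Gaussian-type tilt of the same magnitude would). Making this quantitative is precisely the open problem.
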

   
A first justification for this conjecture is that for relevant disorder, the free energy of the disordered system should be smoother than that of the pure system.
Thus the critical exponent for the disordered system should be larger than $\alpha^{-1}$ for every $\alpha>1-\gamma^{-1}$.
Perhaps a more convincing one is that the proofs in \cite{cf:GT05, cf:CdH} are based on localization strategies which 
take advantage of rare fluctuations of the environment.
With heavy tailed $\go$'s, this strategy could in principle be improved using the presence of larger fluctuations.
However, there are serious technical obstacle to transform these heuristics into a proof.

\subsubsection{Scaling limits}

Important efforts have been recently performed in the community to understand in which way systems with relevant disorder scale to 
continuous limits by tuning the intensity
of the disorder to zero while the system grows \cite{cf:CSZ2, cf:AKQ}. So far, to our knowledge only the case of 
disorder with finite second moment has been considered, and 
limits have been found to be related to Gaussian multiplicative chaos.

\medskip

Here, due to the different nature of the noise, the scaling limit should no longer be Gaussian but should involve some Levy noise, and the results we obtain also suggest that  
the appropriate scaling should be different. 

\begin{conjecture}\label{levyscaling}
 When $\alpha\in (1-\gamma^{-1},1)$, given $\hat \gb>0$ and $\hat h\in \bbR$, one sets 
 \begin{equation}\begin{cases}
   \gb_N&:= \hat \gb N^{1-\alpha-\gamma^{-1}},\\
    h_N&:= \hat h N^{-\alpha},
 \end{cases}\end{equation}
then the sequence $(N^{1-\alpha} Z^{\gb_N,\go}_{N,h_N})_{N}$ converges in law to a non-degenerate random variable.
\end{conjecture}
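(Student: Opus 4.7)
My approach is to use the polynomial chaos expansion of the partition function and identify the scaling limit as a series of multiple stochastic integrals against a $\gamma$-stable L\'evy noise on $[0,1]$. Starting from \eqref{partfunc}, factoring out the $n=N$ term and expanding the remaining product yields
\begin{equation}\label{chaos}
Z^{\gb,\go}_{N,h} \; = \; e^h(1+\gb\go_N)\sum_{k=0}^{N-1}\sum_{0<i_1<\cdots<i_k<N}\prod_{j=0}^{k}\bP[i_{j+1}-i_j\in\tau]\,\prod_{j=1}^{k}\bigl(e^h(1+\gb\go_{i_j})-1\bigr),
\end{equation}
with the conventions $i_0=0$, $i_{k+1}=N$. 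Since $\alpha\in(1-\gamma^{-1},1)\subset(0,1)$, a classical Tauberian argument gives the renewal asymptotics $\bP[n\in\tau]\sim C_\alpha n^{\alpha-1}$ for an explicit $C_\alpha>0$, while the expansion $e^h(1+\gb\go)-1 = h+\gb\go+O\bigl((h+\gb|\go|)^2\bigr)$ identifies the noise field $h+\gb\go$ as the leading contribution.

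Introduce the random signed measure $\mu_N:=\sum_{i=1}^N(\gb_N\go_i+h_N)\delta_{i/N}$ on $[0,1]$. A classical functional limit theorem for triangular arrays in the domain of attraction of a stable law yields
$$N^{\alpha-1}\mu_N \;\xrightarrow{(d)}\; \eta \; := \; \hat h\,\mathrm{Leb}+\hat\gb\,W_\gamma,$$
where $W_\gamma$ is a centered $\gamma$-stable L\'evy noise with scale determined by $C_\bbP$; the choice of exponent $1-\alpha-\gamma^{-1}$ in $\gb_N$ is precisely what couples the stable normalisation $N^{1/\gamma}$ of the noise sums to the renewal normalisation $N^{1-\alpha}$ of $\bP[N\in\tau]^{-1}$. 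Combined with uniform versions of $\bP[n\in\tau]\sim C_\alpha n^{\alpha-1}$, this yields, for each fixed $k$, that $N^{1-\alpha}$ times the $k$-th summand in \eqref{chaos} converges in distribution to
$$I_k \; := \; C_\alpha^{k+1}\int_{0<t_1<\cdots<t_k<1}\prod_{j=0}^{k}(t_{j+1}-t_j)^{\alpha-1}\prod_{j=1}^{k}\eta(dt_j),$$
the exponents of $N$ balancing as $1-\alpha+(k+1)(\alpha-1)+k(1-\alpha)=0$, with the kernel integrable on the simplex because $\alpha>0$. The prefactor $e^{h_N}(1+\gb_N\go_N)$ tends to $1$ in probability, since $h_N\to 0$ and $\gb_N\go_N\to 0$ in probability (for this, note $\bbP[\gb_N|\go_N|>\eps]\sim C_\bbP(\eps/\gb_N)^{-\gamma}\to 0$).

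The main obstacle is to control the tail $\sum_{k>K}$ uniformly in $N$, in order to exchange the limits in $K$ and $N$. The $L^2$ chaos bounds used in the finite-variance scaling limits of \cite{cf:CSZ2} are unavailable because $\bbE[\go_1^2]=\infty$. My plan is to truncate the environment at a level $M_N\to\infty$, writing $\go_i=\go_i^{\le M_N}+\go_i^{>M_N}$: by Karamata's theorem $\bbE[(\go_i^{\le M_N})^2]\sim c\,M_N^{2-\gamma}$, which allows a tweaked $L^2$ analysis of the chaos built from $\go^{\le M_N}$, while the ``big atoms'' $\go_i>M_N$ form a Poisson point process of intensity $\sim C_\bbP N M_N^{-\gamma}$ on $[0,1]$, to be handled by conditioning on the positions of the big atoms and using the L\'evy--It\^o decomposition of the limiting noise. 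The delicate point is to tune $M_N$ (as a suitable fractional power of $N$) so that the $L^2$-error $\gb_N^2 M_N^{2-\gamma}$ against a bounded renewal factor and the error from the macroscopic big atoms vanish simultaneously; this should be possible precisely in the regime $\alpha>1-\gamma^{-1}$. Once this uniform tail estimate is in place, the non-degeneracy of $Z_\infty:=\sum_{k\ge 0}I_k$ is straightforward: for $\hat\gb=0$ the limit reduces to the deterministic series $\sum_k C_\alpha^{k+1}\hat h^k \Gamma(\alpha)^{k+1}/\Gamma((k+1)\alpha)$, a strictly positive, non-constant function of $\hat h$, while for $\hat\gb>0$ the stable noise $W_\gamma$ injects genuine randomness into the higher-order terms.
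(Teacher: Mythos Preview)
The statement you are trying to prove is labelled \emph{Conjecture} in the paper, and the authors explicitly write that it ``could prove to be quite daring as most of the tools used in \cite{cf:CSZ2} do not seem to adapt to the Levy case, and some new ideas should be developed.'' There is therefore no proof in the paper to compare against; you are attempting to settle an open problem.

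Your outline follows the natural strategy suggested by the finite-variance scaling limits of \cite{cf:CSZ2}, but there are two genuine gaps. First, the limiting objects $I_k$ are asserted rather than constructed: multiple stochastic integrals against a $\gamma$-stable noise with $\gamma<2$ are far more delicate than Wiener--It\^o integrals. There is no isometry, off-diagonal $L^2$-type criteria do not apply, and the kernel $\prod_j (t_{j+1}-t_j)^{\alpha-1}$ is singular along every face of the simplex. Even making sense of $I_k$ for $k\ge 2$, let alone proving that the series $\sum_k I_k$ converges, requires a substantial construction that you have not supplied; the paper's own allusion to L\'evy multiplicative chaos \cite{cf:RSV} hints that the correct limiting object may not even be a chaos series in the usual sense. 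Second, the crucial uniform tail control is only a plan: you acknowledge that $L^2$ bounds are unavailable, propose a truncation $\go=\go^{\le M_N}+\go^{>M_N}$, and say the tuning of $M_N$ ``should be possible precisely in the regime $\alpha>1-\gamma^{-1}$'' without carrying out a single estimate. The interaction between the truncated chaos (whose $L^2$ norm blows up with $M_N$) and the Poissonian big atoms (which must be re-inserted multiplicatively into the partition function, not additively) is exactly where the difficulty lies, and nothing in your sketch shows why the two errors can be made to vanish simultaneously. As it stands the proposal is a reasonable heuristic for why the conjecture is plausible, but it is not a proof.
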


The factor $N^{1-\alpha}$ is present to compensate for the cost of the conditioning $N\in \tau$ and would not be present if one considered another type of boundary condition.
We believe that the limit could be expressed as a multiplicative chaos over a Levy Noise similar to the Levy multiplicative chaos defined in \cite{cf:RSV}
but defined on a state of paths (see \cite{cf:Shamov} for a general definition of chaos in the Gaussian setup).

When $\alpha>1$, $h_N=h N^{-1}$ and $\gb_N:=\hat \gb N^{-\gamma^{-1}}$, the sequence  $Z^{\gb_N,\go_N}_{N,h_N}$ should also converge to some non-degenerate distribution. 
While the rigorous  proof in the case $\alpha>1$ does not seem to present a big challenge, 
Conjecture \ref{levyscaling} on the other hand could prove to be quite daring as most of the tools used in \cite{cf:CSZ2} 
do not seem to adapt to the Levy case, and some new  ideas should be developed.  
  
 \subsection{About the proofs}
 
 \subsubsection{Disorder irrelevance and upper bound on the critical point shift}
 
 The proof of Theorem \ref{diso1} requires a new method as all the proof of results of this type in the literature rely on controlling the second moment 
 which is infinite in our case. What we do instead is to try to control the moment of order $p$ for some $p$ in the interval $(1,\gamma)$. 
 
 \medskip
 
 The problem that arises then is that while integer moments of partition functions have generally a nice expression involving several 
 replicas of the system, this is not the case for fractional moments. To obtain suitable upper-bounds on the 
 fractional moments, we first rewrite the problem as the estimation of the $(p-1)$-th moment of a partition of the system where
 the environment has been tilted along a quenched renewal trajectory.
 
 \medskip
 
To obtain an upper bound on this modified partition function, we then perform an adequate partial
annealing and a decomposition of the partition function which takes into account the high-cost
of making long jumps. These computations require a fine intuition of the mechanism that yields self-averaging in the partition function.

\medskip

To control the value of $h_c(\gb)$, like in \cite{cf:AZ08,cf:T08}, we try to control the rate of explosion of the partition function for small values of $\gb$.
Depending on the value of $\alpha$, we perform this by controlling either the second moment 
($\alpha\ge 1/2$) or a fractional moment $\alpha< 1/2$ of a partition function with truncated 
environment.

\subsubsection{Disorder relevance}

In the case of relevant disorder, we estimate the $q$-fractional moments of the partition function for some $q\in (0,1)$ with the help of a coarse graining procedure 
combined with a change of measure argument. This method has been introduced  \cite{cf:GLT07} and has been improved several times \cite{cf:GLT08, cf:BL}. 
The underlying idea is to introduce a penalization for atypical environments, that is environments which have small probability but give an important contribution to 
the partition function $\bbE[Z^{\gb,\go}_{N,h}]$.

\medskip

The important novelty added in this paper is that we do not penalize the environment for which the empirical mean of the $\go$ is too large like in the $L^2$ case,
but we choose to penalize environments for which the extremal values of $\go$ are large, as heavier-tailed distributions tend to make these values meaningful.

\subsection{Organization of the paper}  
  In Section \ref{technicos} we introduce a couple of technical results that which are required for the proofs.
  Sections \ref{proofirel} and \ref{fracof} are dedicated to the proof of Theorem \ref{diso1}, this is the most novel part of the paper where an original method is introduced
  to treat disorder irrelevance in the absence of second moment. In Section \ref{upperboundcps} the upper bound on $h_c(\gb)$ present in Equation \eqref{shiftee} are proved, 
  partly using the method used to prove disorder irrelevance.
  Finally in Section \ref{rerel} we prove the lower bounds from  Equation \eqref{shiftee} which completes the proof of Theorem \ref{diso}.

\section{A few technical tools}\label{technicos}

\subsection{Estimate on probability of visiting a given point}

Set $u(n)=\bP[n \in \tau]$. 
Under our power-law tail assumption the asymptotic behavior of $u(n)$ is well identified.

\begin{lemma}\label{lem:don}
If the assumption \eqref{Kt} holds, then there exists $C'_K$ (depending on the renewal function $K$) such that 
\begin{equation}\label{doney}
 u(n)\sim \begin{cases} C'_K n^{\alpha-1}, \quad & \text{ if } \alpha\in(0,1),\\
 C'_K (\log n)^{-1}, \quad & \text{ if } \alpha = 1,\\
                        C'_K \quad & \text{ if } \alpha>1.
          \end{cases}
\end{equation}
\end{lemma}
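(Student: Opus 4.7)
The plan is a case-by-case application of standard renewal-theoretic asymptotics; the assumption \eqref{Kt} ensures $K(n)>0$ for all sufficiently large $n$, so $\gcd\{n : K(n)>0\}=1$ and $\tau$ is aperiodic, which is the standing hypothesis of the results I will cite below.

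For $\alpha>1$ the mean $\mu:=\bE[\tau_1]=\sum_{n\ge 1} n K(n)$ is finite (since $nK(n)\sim C_K n^{-\alpha}$ is summable), so the Erd\H os--Feller--Pollard version of Blackwell's renewal theorem for aperiodic lattice renewals gives $u(n)\to 1/\mu$, yielding $C'_K=1/\mu$. For $\alpha\in(0,1)$ the mean is infinite and one invokes Doney's sharp renewal asymptotic (which refines Garsia--Lamperti): for an aperiodic renewal whose inter-arrival law is regularly varying of index $-(1+\alpha)$ with $\alpha\in(0,1)$, one has $u(n)\sim \tfrac{\alpha\sin(\pi\alpha)}{\pi}\, n^{\alpha-1}/L(n)$ with $L$ the slowly varying factor. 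Specialising to $L\equiv C_K$ produces the stated form with
\[
C'_K=\frac{\alpha\sin(\pi\alpha)}{\pi C_K}.
\]

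The marginal case $\alpha=1$ is handled by a Tauberian argument applied to the generating-function identity $\sum_{n\ge 0} u(n) z^n = (1-\sum_{n\ge 1} K(n) z^n)^{-1}$. From $\bar K(n):=\sum_{m>n} K(m)\sim C_K/n$ one derives, by summation by parts, the small-$s$ estimate $1-\sum_n K(n)e^{-sn}\sim C_K\, s\log(1/s)$ as $s\downarrow 0$, hence $\sum_n u(n)e^{-sn}\sim [C_K\, s\log(1/s)]^{-1}$. Since $u(n)$ is not itself monotone, Karamata's Tauberian theorem is applied to the nondecreasing cumulative sum $U(n):=\sum_{k\le n} u(k)$, giving $U(n)\sim n/(C_K\log n)$, and the pointwise statement $u(n)\sim 1/(C_K\log n)$ is then obtained by a local argument exploiting the renewal equation $u(n)=\sum_{k=1}^{n}K(k) u(n-k)$ together with the slow variation of the envelope.

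The genuinely delicate step is the marginal regime $\alpha=1$: none of the three statements is new in itself, but there the direct Tauberian conclusion only applies to averaged quantities and a careful passage from $U(n)$ to $u(n)$ is required. In the other two regimes the argument reduces to quoting a classical theorem and identifying the constant. I therefore expect the entire proof to amount to little more than assembling these three references, with the only real writing effort devoted to the $\alpha=1$ boundary case.
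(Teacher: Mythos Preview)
Your proposal is correct and follows essentially the same route as the paper: the paper's ``proof'' is nothing more than a list of references --- Doney's local renewal theorem for $\alpha\in(0,1)$, the classical (Erd\H os--Feller--Pollard) renewal theorem for $\alpha>1$, and a pointer to \cite[Theorem~2.2(3)]{cf:GB} for the marginal case $\alpha=1$. You have supplied more detail than the paper does, in particular the explicit constants and a sketch of the Tauberian passage for $\alpha=1$; this is fine, though in the paper's spirit a bare citation would also suffice.
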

The case $\alpha>1$ comes from \cite{cf:doney}, and the case $\alpha>1$ is a consequence of the renewal theorem 
(we refer to \cite[Theorem 2.2(3)]{cf:GB} for full references, including the case $\alpha=1$).

\subsection{Finite volume criteria}

To obtain a lower on the free energy, it is sufficient to obtain a bound on the partition function of finite size.
Indeed we can observe that 
 $\bbE[\log Z^{\gb,\go}_{N,h}]$ is a super-additive sequence \cite[Proposition 4.2]{cf:GB} and thus, for every $N$:
 \begin{equation}
   \tf(\gb,h)\ge \frac{1}{N}\bbE[\log Z^{\gb,\go}_{N,h}].
 \end{equation}
  
 However, in our lower bound computations, it will be much more convenient for us to work with the free-boundary partition function where the constraint 
  $\{N\in \tau\}$ is dropped, that is 
 \begin{equation*}
    Z^{\gb,\go,\cf}_{N,h} := \bE\left[\prod_{n\in \tau \cap [1,N]} e^h(\gb\go_n+1)\right].
 \end{equation*}
  
 Note that  $Z^{\gb,\go,\cf}_{N,h}$ compares well with $Z^{\gb,\go}_{N,h}$ (cf. \cite[Equation (4.25)]{cf:GB}); indeed, this quantity 
 also provides a lower bound on the free energy with the loss of a $\log$ factor (for a proof see \cite[Proposition 2.6]{cf:albezhou}).
 
 \begin{lemma}
 There exists a constant $C(\gb)$ such that for any value of $N$ and $h$,
    \begin{equation}\label{finitfree}
   \tf(\gb,h)\ge \frac{1}{N}\bbE[\log Z^{\gb,\go,\cf}_{N,h}]-C(\gb) \frac{\log N}{N}. 
    \end{equation}
\end{lemma}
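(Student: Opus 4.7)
The plan is to combine super-additivity of the constrained partition function with a polynomial-loss comparison between $Z^{\gb,\go,\cf}_{N,h}$ and $Z^{\gb,\go}_{N,h}$. Since $N \mapsto \bbE[\log Z^{\gb,\go}_{N,h}]$ is super-additive, one has $\tf(\gb,h) \ge \tfrac{1}{N}\bbE[\log Z^{\gb,\go}_{N,h}]$, so it suffices to establish
$$\bbE[\log Z^{\gb,\go,\cf}_{N,h}] \le \bbE[\log Z^{\gb,\go}_{N,h}] + C(\gb)\log N \qquad \text{for all } h.$$

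For $h \ge 0$, I would decompose the free-boundary partition function according to the position of the last renewal of $\tau$ in $[0,N]$,
$$Z^{\gb,\go,\cf}_{N,h} \;=\; \sum_{k=0}^{N} Z^{\gb,\go}_{k,h}\, \bar K(N-k), \qquad \bar K(n):=\bP[\tau_1>n],$$
with the conventions $\bar K(0):=1$ and $Z^{\gb,\go}_{0,h}:=1$. Each term can be related to $Z^{\gb,\go}_{N,h}$ via the elementary one-jump lower bound
$$Z^{\gb,\go}_{N,h} \;\ge\; Z^{\gb,\go}_{k,h}\, K(N-k)\, e^h(\gb\go_N+1),$$
obtained by forcing a direct jump from $k$ to $N$. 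Under the tail estimate \eqref{Kt} there exist constants depending on $K$ with $K(n) \ge c\, n^{-(1+\alpha)}$ and $\bar K(n) \le C\, n^{-\alpha}$ for every $n \ge 1$, hence $\bar K(N-k)/K(N-k) \le C'(N-k)$. Combining this with the deterministic bound $\gb\go_N+1\ge 1-\gb$ (from \eqref{star} and $\gb<1$) and summing over $k=0,\dots,N$ yields
$$Z^{\gb,\go,\cf}_{N,h} \;\le\; Z^{\gb,\go}_{N,h}\!\left(1 + \frac{C''\, N^2\, e^{-h}}{1-\gb}\right).$$
Taking logarithms, the additional term is deterministically bounded by $C(\gb)\log N$ when $h\ge 0$, and taking expectations gives the desired comparison.

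The remaining case $h<0$ is automatic: $\log Z^{\gb,\go,\cf}_{N,h}$ is monotone in $h$, so together with Jensen's inequality at $h=0$,
$$\bbE[\log Z^{\gb,\go,\cf}_{N,h}] \;\le\; \bbE[\log Z^{\gb,\go,\cf}_{N,0}] \;\le\; \log\bbE[Z^{\gb,\go,\cf}_{N,0}] \;=\; 0,$$
while $\tf(\gb,h)\ge 0$ always, so \eqref{finitfree} holds trivially. The only step where moments of $\go$ could enter is the control of $(\gb\go_N+1)^{-1}$, but this is bounded deterministically by $(1-\gb)^{-1}$ thanks to the lower truncation $\go_n\ge -1$; hence the heavy-tailed assumption \eqref{defgamma} plays no role here and the argument proceeds without any second-moment input. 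The main (mild) obstacle is purely bookkeeping: tracking the polynomial losses from $\bar K/K$ and the deterministic lower bound on $\gb\go_N+1$ in a way that survives the $h<0$ regime, for which the annealed bound must replace the constrained super-additive one.
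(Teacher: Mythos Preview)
The paper does not give its own proof of this lemma; it simply refers to \cite[Proposition 2.6]{cf:albezhou}. Your argument---super-additivity of $N\mapsto\bbE[\log Z^{\gb,\go}_{N,h}]$ together with the last-renewal decomposition and a one-jump comparison between the free and constrained partition functions---is exactly the standard route and is correct in spirit. The split into $h\ge 0$ (polynomial comparison) and $h<0$ (Jensen at $h=0$) is clean, and your observation that $(1+\gb\go_N)^{-1}\le (1-\gb)^{-1}$ deterministically is the right way to avoid any moment assumption on $\go$.

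There is one small gap worth flagging. You assert that \eqref{Kt} yields $K(n)\ge c\,n^{-(1+\alpha)}$ for \emph{every} $n\ge 1$, but \eqref{Kt} is only an asymptotic statement: it guarantees this lower bound for $n\ge n_0$, not for small $n$. In particular the one-jump inequality $Z^{\gb,\go}_{N,h}\ge Z^{\gb,\go}_{k,h}K(N-k)e^h(1+\gb\go_N)$ is vacuous whenever $K(N-k)=0$, which can happen for $0<N-k<n_0$. The fix is routine: for those finitely many gap lengths one replaces the single jump by a short multi-step path from $k$ to $N$ (aperiodicity of $\tau$, which follows from \eqref{Kt}, ensures $u(N-k)>0$ once $N-k$ is large enough, and the remaining cases are absorbed into the constant), or one simply adds the harmless standing hypothesis $K(n)>0$ for all $n$, as is customary in the pinning literature. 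With this adjustment your proof goes through.
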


     \section{Disorder irrelevance: the proof of Theorem \ref{diso1}}\label{proofirel}
     
The idea used in the proof is similar to the one introduced in \cite{cf:Lmart}:
if at the pure critical point the partition function behaves like its average, 
it implies that the measure $\bP^{\gb,\go}_{N,0}$ is in a sense close to the original one $\bP$.
We want to use this information to prove that the expected number of contacts at criticality is large, from which we get a 
lower bound on the partition function $Z^{\gb,\go,\cf}_{N,h}$ at positive times, and finally conclude using \eqref{finitfree}.
Using this procedure, we can prove the following result:

\begin{proposition}\label{irrelevex}
 Assume that \eqref{defgamma} holds and that $\alpha<[1-\gamma^{-1}]$.
 There exists $\gb_0$ such that for all $\gb\in[0,\gb_0]$, there exists $C_\gb$ (that may depend also on the inter-arrival distribution and the distribution of $\go$) such that
 \begin{equation}\label{eq:dalowbound}
\forall h\in[0,1], \quad \tf(\gb,h)\ge C_{\gb}  \frac{h^{\alpha^{-1}}}{|\log h|^{(\alpha-1)/\alpha}}.
 \end{equation}
\end{proposition}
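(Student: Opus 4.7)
The plan is to combine the finite volume criterion \eqref{finitfree} with a first-order convexity bound in $h$. Setting $W_N := Z^{\gb,\go,\cf}_{N,0}$ and using that $h \mapsto \log Z^{\gb,\go,\cf}_{N,h}$ is convex, we have for every $h \ge 0$,
\begin{equation*}
\log Z^{\gb,\go,\cf}_{N,h} \geq \log W_N + h \cdot \bE^{\gb,\go,\cf}_{N,0}\!\left[\sum_{n=1}^N \delta_n\right],
\end{equation*}
where $\bE^{\gb,\go,\cf}_{N,0}$ denotes expectation under the free-boundary Gibbs measure associated with $W_N$. For the first term on the right, restricting the renewal expectation to $\{\tau_1 > N\}$---on which the product in \eqref{partfunc} is empty and equal to $1$---yields the deterministic bound $W_N \geq \bP[\tau_1 > N] \geq c\, N^{-\alpha}$ by \eqref{Kt}, hence $\log W_N \geq -\alpha \log N - O(1)$; divided by $N$ this is absorbed in the $(\log N)/N$ error of \eqref{finitfree}.

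The crux is therefore to lower bound
\begin{equation*}
\bbE\!\left[\bE^{\gb,\go,\cf}_{N,0}\!\left[\sum_{n=1}^N \delta_n\right]\right] = \bbE\!\left[Y_N/W_N\right],
\end{equation*}
where $Y_N := \bE[(\sum_{n=1}^N \delta_n)\prod_{m=1}^N(1+\gb\go_m\delta_m)]$; by \eqref{star} we have $\bbE[W_N]=1$, and $\bbE[Y_N] = U(N):= \sum_{n=1}^N u(n) \sim C'_K N^\alpha$ by Lemma~\ref{lem:don}. The target is the estimate $\bbE[Y_N/W_N] \geq c_\gb\, U(N)$. The main technical input---and the core content of Section~\ref{fracof}---is a uniform fractional moment bound $\sup_N \bbE[W_N^p] \leq C_\gb$ for some $p \in (1,\gamma)$, and this is precisely where the assumption $\alpha < 1-\gamma^{-1}$ enters. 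From it we extract a matching bound $\bbE[Y_N^p] \leq C'_\gb\, U(N)^p$ by decomposing $Y_N = \sum_{n=1}^N \bE[\delta_n \prod_m(1+\gb\go_m\delta_m)]$ and exploiting the renewal Markov property at $n$: on $\{n\in\tau\}$ the pre- and post-$n$ parts of the trajectory are independent of each other and of $\go_n$, so each summand factorizes into $u(n)(1+\gb\go_n)$ times a product of two conditioned partition functions of the same form as $W$. Applying the fractional moment bound to each factor and then the $L^p$-triangle inequality produces $\|Y_N\|_p \leq C_\gb \sum_n u(n) = C_\gb\, U(N)$.

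Given both fractional moments, the target bound follows from a truncation. On $A_M := \{W_N \leq M\}$ we have $Y_N/W_N \geq Y_N/M$, while H\"older's inequality with exponents $p$ and $p/(p-1)$ gives
\begin{equation*}
\bbE[Y_N \ind_{A_M^\compl}] \leq \bbE[Y_N^p]^{1/p}\, \bbP[W_N > M]^{(p-1)/p} \leq C''_\gb\, U(N)\, M^{-(p-1)},
\end{equation*}
which is less than $U(N)/2$ once $M$ is chosen large enough, uniformly in $N$. Hence $\bbE[Y_N \ind_{A_M}] \geq U(N)/2$ and $\bbE[Y_N/W_N] \geq U(N)/(2M)$. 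Injecting this into the convexity inequality and then \eqref{finitfree} yields $\tf(\gb,h) \geq c_\gb h N^{\alpha-1} - C'_\gb (\log N)/N$, and \eqref{eq:dalowbound} follows by optimizing $N \asymp (|\log h|/h)^{1/\alpha}$, at which both the main term and the error are of order $h^{1/\alpha}/|\log h|^{(1-\alpha)/\alpha}$. The principal obstacle is the fractional moment bound on $W_N$ itself, which is deferred to Section~\ref{fracof}; the passage to $Y_N$ and the truncation step are then natural consequences of that estimate.
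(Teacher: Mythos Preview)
Your overall architecture matches the paper's: convexity in $h$, the deterministic lower bound $\log W_N\ge \log\bP[\tau_1>N]$, the finite volume criterion \eqref{finitfree}, and the optimisation $N\asymp(|\log h|/h)^{1/\alpha}$ are exactly what is done in Section~\ref{decompex}. The divergence is in how you lower bound $\bbE\bigl[\bE^{\gb,\go,\cf}_{N,0}[\cN_N(\tau)]\bigr]$, and there your argument has a genuine gap.

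The paper does \emph{not} attempt to control $\|Y_N\|_p$. Instead it proves Lemma~\ref{simplifiex}: from $\sup_N\bbE[W_N^p]<\infty$ alone one gets a $\delta>0$ such that any event with $\bP[A]\ge 1-\delta$ satisfies $\bbE[\bP^{\gb,\go,\cf}_{N,0}(A)]\ge\delta$. Applying this to $A_{N,\gep}=\{\cN_N\ge\gep N^\alpha\}$ yields directly $\bbE[\bE^{\gb,\go,\cf}_{N,0}[\cN_N]]\ge\gep\delta N^\alpha$. Only the \emph{free} moment bound of Proposition~\ref{fractionalmoment} is used.

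Your truncation route instead needs $\|Y_N\|_p\le C\,U(N)$. After the Markov factorisation, each summand equals $u(n)(1+\gb\go_n)\,A_n\,B_n$ with $B_n=W_{N-n}(\theta_n\go)$ and $A_n=\bE\bigl[\prod_{m=1}^{n-1}(1+\gb\go_m\delta_m)\,\big|\,n\in\tau\bigr]$ the \emph{bridge} partition function. Triangle inequality plus independence of the three factors give $\|Y_N\|_p\le C\sum_n u(n)\,\|A_n\|_p$, so you need $\sup_n\|A_n\|_p<\infty$. This is \emph{not} a consequence of Proposition~\ref{fractionalmoment}: the only comparison available, $u(n)(1+\gb\go_n)A_nB_n\le W_N$, yields $\bbE[A_n^p]\le C\,u(n)^{-p}$, which diverges; and the crude bound $Z^{\gb,\go}_{n,0}\le W_n$ gives only $\|Y_N\|_p\le CN$, after which your H\"older step produces $\bbE[Y_N/W_N]\gtrsim N^{\alpha-(1-\alpha)/(p-1)}$, a negative power of $N$ for every admissible $p<\gamma$ when $\alpha<1-\gamma^{-1}$. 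So the argument as written does not close.

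A uniform bound $\sup_n\bbE[A_n^p]<\infty$ is very plausibly true and should follow by rerunning the size-biasing and coarse-graining of Section~\ref{fracof} for the constrained endpoint (Lemma~\ref{suZ} already establishes the analogous pinned estimate for the intersection renewal), but this is a substantial extra piece of work, not something you can wave through as ``of the same form as $W$''. The paper's event-transfer Lemma~\ref{simplifiex} is precisely the device that sidesteps this difficulty.
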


Contrarily to what we do in  \cite{cf:Lmart}, we do not use the convergence of the partition function (as a martingale), but find a more efficient way to
use uniform integrability in order to extract quantitative statements. Then we use the same technique to prove upper-bounds on $h_c(\gb)$ in the 
disorder relevant case.

  \subsection{Decomposition of the proof} \label{decompex}

In this short section, we show how Proposition \ref{irrelevex} follows from two key statements.
The most important one, whose proof is detailed in Section \ref{fracof}, is that some non-integer moments of order $p>1$
of the partition function are uniformly bounded in the size of the system.

  \begin{proposition}\label{fractionalmoment}
   For any $p\in(1, \gamma)$ and for $\gb\le  \gb_0(p)$, we have 
   \begin{equation}
    \sup_{N\ge 0} \bbE\left[ (Z^{\gb,\go,  \cf}_{N,0})^p \right]<\infty.
    \end{equation}
    \end{proposition}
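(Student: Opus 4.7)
The plan is to use a size-biasing (tilting) trick to convert the $p$-th moment with $p \in (1,\gamma)$ into a $(p-1)$-th moment (with $p-1 \in (0,1)$) of a modified partition function, which can then be controlled via a two-replica decomposition exploiting the cost of long renewal jumps. This replaces the classical second-moment approach, which is unavailable here because $\bbE[\go^2] = \infty$. The key point is that, for each fixed renewal configuration $\tau$, the random variable $\prod_{n \in \tau \cap [1,N]}(1+\gb\go_n)$ is nonnegative (since $\go_n \geq -1$ and $\gb < 1$) and has $\bbP$-mean equal to one, so it is the density of a probability measure $\tbP_\tau$ with respect to $\bbP$. Inserting $Z^{\gb,\go,\cf}_{N,0} = \bE\bigl[\prod_{n \in \tau \cap [1,N]}(1+\gb\go_n)\bigr]$ into $\bbE[(Z^{\gb,\go,\cf}_{N,0})^p] = \bbE[Z^{\gb,\go,\cf}_{N,0}\cdot(Z^{\gb,\go,\cf}_{N,0})^{p-1}]$ and applying Fubini yields
\begin{equation*}
\bbE\bigl[(Z^{\gb,\go,\cf}_{N,0})^p\bigr] \;=\; \bE\bigl[\tbE_\tau[(Z^{\gb,\go,\cf}_{N,0})^{p-1}]\bigr].
\end{equation*}
Under $\tbP_\tau$ the $\go_n$ remain independent: for $n \notin \tau$ the distribution is the original $\bbP$, so integrating these variables out is free (the \emph{partial annealing} alluded to in the introduction); for $n \in \tau$ the law is biased by the density $1+\gb\go_n$, with $\tbE_\tau[(1+\gb\go_n)^{p-1}]=\bbE[(1+\gb\go)^p]=:C_\gb$ finite precisely because $p<\gamma$, and satisfying $C_\gb-1 = O(\gb^\gamma)$ as $\gb \to 0$ (by splitting the integral at $|\go| = 1/\gb$).

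To bound $\tbE_\tau[(Z^{\gb,\go,\cf}_{N,0})^{p-1}]$, introduce an independent copy $\tau'$ of the renewal, so that $Z^{\gb,\go,\cf}_{N,0} = \sum_S P(S)\prod_{n \in S}(1+\gb\go_n)$ with $S = \tau' \cap [1,N]$ and $P(S) = \bP[\tau'\cap[1,N] = S]$. Since $x \mapsto x^{p-1}$ is concave on $\bbR_+$ with value $0$ at the origin, it is subadditive, giving the fractional inequality $(Z^{\gb,\go,\cf}_{N,0})^{p-1} \leq \sum_S P(S)^{p-1}\prod_{n \in S}(1+\gb\go_n)^{p-1}$. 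After taking $\tbE_\tau$, each site in $S$ contributes either $C_\gb$ (if $n \in \tau$) or a factor at most $1$ (if $n \notin \tau$, by Jensen applied to the concave $x \mapsto x^{p-1}$). Averaging over $\tau$ and summing over $S$ calls for a careful long-jump decomposition: a naive bound using only $\sum_n K(n)^{p-1}$ is too weak since this sum is generically divergent in our regime (indeed $K(n)^{p-1} \sim n^{-(1+\alpha)(p-1)}$, with $(1+\alpha)(p-1) < 1$ when $\alpha$ is close to $0$). Instead, configurations $S$ are classified by their intersection pattern with $\tau$, and each excursion of $S$ away from $\tau$ is paired with a corresponding renewal transition of $\tau$, so that its cost $K(\cdot)^{p-1}$ is absorbed by a renewal weight $u(\cdot)$. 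Rearranging the sums appropriately yields a geometric-type series whose base involves $(C_\gb - 1)\sum_{n\geq 1} u(n)^{\gamma'}$ with $\gamma'$ close to $\gamma$; under $\alpha < 1 - \gamma^{-1}$, Lemma \ref{lem:don} gives $u(n) \sim C'_K n^{\alpha-1}$ so that $\sum_n u(n)^\gamma < \infty$, and since $C_\gb - 1 = O(\gb^\gamma)$, choosing $\gb_0(p)$ small enough ensures the series converges uniformly in $N$.

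The main obstacle is the combinatorial bookkeeping in the long-jump decomposition. The weights $K(n)^{p-1}$ are not a priori summable with any useful exponent, and only once they are paired with the renewal hit-probabilities $u(n)$ of $\tau$ does the correct summability criterion $\sum_n u(n)^\gamma < \infty$ emerge. The proof therefore hinges on finding the right order in which to perform the sums over $S$, $\tau$ and the intersection pattern, so that the heavy-tail threshold $\alpha < 1 - \gamma^{-1}$ appears naturally as the controlling condition rather than some weaker, non-optimal variant. Getting this matching right, while keeping track of the (finitely many, but numerous) constants, constitutes the technical heart of Proposition \ref{fractionalmoment}.
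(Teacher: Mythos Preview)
Your opening move is correct and matches the paper: size-biasing turns the $p$-th moment into a $(p-1)$-th moment under a tilted law, and you correctly identify that the condition $\sum_n u(n)^\gamma < \infty$ (equivalently $\alpha < 1-\gamma^{-1}$) must emerge as the controlling one. However, the middle of the argument has a genuine gap.

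Applying subadditivity directly to the full expansion $Z = \sum_S P(S)\prod_{n\in S}(1+\gb\go_n)$ produces $\sum_S P(S)^{p-1}\,C_\gb^{|S\cap\tau|}$, and you yourself note that the weights $K(n)^{p-1}$ are not summable in the regime of interest. Your proposed repair --- ``pairing each excursion of $S$ away from $\tau$ with a renewal weight $u(\cdot)$'' --- is too vague to be a proof, and it is not clear how such a pairing could recover summability once subadditivity has already been applied termwise: at that stage each configuration $S$ carries the full weight $P(S)^{p-1}$, and no averaging over $\tau$ can undo that loss. Your claimed endpoint, a geometric series with base $(C_\gb-1)\sum_n u(n)^{\gamma'}$, does not follow from the ingredients you have written down.

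The paper's route is different in two essential respects. First, after size-biasing along a quenched renewal $\tau'$, it applies Jensen to the \emph{untilted} environment variables before any subadditivity, which collapses the problem to a partition function indexed by $\tau\cap\tau'$ rather than by $\tau'$ alone. Second, subadditivity is applied not termwise but only after a coarse-graining: the trajectory of $\tau\cap\tau'$ is cut at gaps longer than a scale $L$, and $(\sum a_i)^q\le\sum a_i^q$ is used at the level of this block decomposition. Each block (where all gaps are $\le L$) is then bounded by a separate \emph{change-of-measure} argument with density $g(\tilde\go)=(1+\gb\tilde\go)^{q-1}$, exploiting that the intersection renewal is transient. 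The final condition takes the form $\bigl(\sum_{m\ge L}u(m)^{q+1}\bigr)\bigl(\sum_n u(n)^{1-q}\,\tilde\bbE[(\tilde Z^L_n)^q]\bigr)<1$, with smallness obtained by a two-parameter choice of $L$ large and then $\gb$ small --- a mechanism quite different from the one you describe. The change-of-measure step, which is where $\gb$ small is actually used, is absent from your proposal.
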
    
    
The second result, whose short proof is detailed in Section \ref{simplex} uses this bound to show that typical events for 
$\bP$ cannot be atypical for $\bP^{\gb,\go,\cf}_{N,0}$. 
 \begin{lemma}\label{simplifiex}
Given  $p>1$, if $\bbE\left[ (Z^{\gb,\go,\cf}_{N,0})^p \right]=: M < \infty$, there exists $\delta = \delta(M,p)>0$ such that 
for any event $A$
\begin{equation}
 \left\{  \ \bP[A]\ge 1-\delta  \ \right\}\quad \Rightarrow  \quad  \left\{ \ \bbE\left[ \bP^{\gb,\go,\cf}_{N,0}[A] \right]\ge \delta \right\}.
\end{equation}
\end{lemma}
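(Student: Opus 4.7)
The plan is to reduce the statement to an upper bound on $\bbE[Y/Z]$ where $Z := Z^{\gb,\go,\cf}_{N,0}$ and $Y := \bE[\ind_{A^c}\prod_{n\in\tau\cap[1,N]}(\gb\go_n+1)]$, and to prove this bound in two steps: first a Paley--Zygmund-style lower tail estimate for $Z$ coming from the $p$-th moment assumption, and then a splitting argument using $Y\le Z$. Concretely, from the definition of $\bP^{\gb,\go,\cf}_{N,0}$ we have $\bP^{\gb,\go,\cf}_{N,0}[A]=1-Y/Z$, so it suffices to show $\bbE[Y/Z]\le 1-\delta$. A Fubini interchange combined with $\bbE[\gb\go_n+1]=1$ (which follows from $\bbE[\go_1]=0$) yields the identity $\bbE[Y]=\bP[A^c]\le \delta$ and in particular $\bbE[Z]=1$.

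The first substantive step is a Paley--Zygmund argument. For any $t\in(0,1)$, Hölder's inequality with conjugate exponents $p$ and $p/(p-1)$ gives
\begin{equation*}
\bbE[Z\ind_{Z\ge t}] \;\le\; \bbE[Z^p]^{1/p}\,\bbP[Z\ge t]^{(p-1)/p} \;=\; M^{1/p}\,\bbP[Z\ge t]^{(p-1)/p},
\end{equation*}
while $\bbE[Z\ind_{Z<t}]\le t$. Combining these with $\bbE[Z]=1$ yields a constant $c_0=c_0(M,p)>0$ (explicitly, one may take $t=1/2$ and $c_0=(2M^{1/p})^{-p/(p-1)}$) such that $\bbP[Z\ge 1/2]\ge c_0$.

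For the second step I split
\begin{equation*}
\bbE[Y/Z] \;=\; \bbE\bigl[(Y/Z)\ind_{Z\ge 1/2}\bigr] + \bbE\bigl[(Y/Z)\ind_{Z<1/2}\bigr] \;\le\; 2\,\bbE[Y] + \bbP[Z<1/2] \;\le\; 2\delta + (1-c_0),
\end{equation*}
using on the first piece that $1/Z\le 2$, and on the second piece that $Y/Z\le 1$ since $0\le Y\le Z$. This gives $\bbE[\bP^{\gb,\go,\cf}_{N,0}[A]]\ge c_0-2\delta$, and choosing $\delta:=c_0/3$ (which depends only on $M$ and $p$) closes the proof.

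There is no serious obstacle here: the argument is essentially a soft use of uniform integrability packaged through the $L^p$-bound with $p>1$. The only point worth highlighting is why one needs a moment strictly larger than $1$ rather than just $\bbE[Z]=1$: the fractional moment is used exclusively to upgrade the trivial bound $\bbE[Z\ind_{Z\ge t}]\le 1$ into the Hölder-type inequality that produces a quantitative lower bound on $\bbP[Z\ge 1/2]$. This is the same mechanism by which a second moment bound would normally yield the conclusion, but the present version only costs a finite moment of order $p\in(1,\gamma)$, which is exactly the regime in which Proposition~\ref{fractionalmoment} applies.
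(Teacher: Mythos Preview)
Your proof is correct and follows essentially the same approach as the paper: both split $\bbE\bigl[\bP^{\gb,\go,\cf}_{N,0}[A^c]\bigr]$ according to whether $Z\ge 1/2$, use $1/Z\le 2$ together with $\bbE[Y]=\bP[A^c]$ on one piece and $Y/Z\le 1$ on the other, and control $\bbP[Z\ge 1/2]$ via the same H\"older/Paley--Zygmund inequality (the paper isolates this as a separate sublemma). Your explicit choice $\delta=c_0/3$ with $c_0=2^{-p/(p-1)}M^{-1/(p-1)}$ matches the paper's constant.
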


 Set $\cN_N(\tau):= \#([1,N]\cap \tau)$ be the number of renewal points in the interval $[1,N]$.
Given $\gep>0$, we apply Lemma \ref{simplifiex} to the event
 \begin{equation*}
  A_{N,\gep}:=\left\{ \tau \, : \, \cN_N(\tau)  \ge \gep N^{\alpha} \right\}.
 \end{equation*}  
As a consequence of the convergence of $(n^{-1/\alpha}\tau_{\lceil nt \rceil})_{t\ge 0}$ to an $\alpha$-stable subordinator (see  \cite[Chapter XVII]{cf:Feller}),
$N^{-\alpha}\cN_N(\tau)$ converges in law to a random variable (the first hitting time of $[1,\infty)$ for this subordinator) 
whose distribution has no atom at zero. In particular 

 \begin{equation}\label{crocodiles}
 \lim_{\gep\to 0}\limsup_{N\to \infty} \bP(A_{N,\gep})=0.
 \end{equation}

 \begin{proof}[Proof of Proposition \ref{irrelevex}]
 We have now all the ingredients to prove \eqref{eq:dalowbound}.
 We fix $p\in(1,\gamma)$ arbitrarily, consider $\gb\le  \gb_0(p)$, set $M=M(\gb):= \sup_{N\ge 0} \bbE\left[ (Z^{\gb,\go,\cf}_{N,0})^p \right]$
 and choose $\gep_0$ and $N_0$ such that  
 $\bP(A_{N,\gep_0})\ge 1-\delta(M,p)$, for all $N\ge N_0$.
 From Lemma \ref{simplifiex}, we have 
 \begin{equation}\label{deltouze}
  \bbE\left[ \bP^{\gb,\go,\cf}_{N,0}[A_{N,\gep_{0}}] \right]>\delta.
 \end{equation}
 Using the convexity of the function $h \mapsto \log Z^{\gb,\go,  \cf}_{N,h}$, we observe that for some constant $C$
 \begin{multline}
 \log Z^{\gb,\go,\cf}_{N,h}\ge  \log Z^{\gb,\go,\cf}_{N,0} + h\partial_u \log Z^{\gb,\go,\cf}_{N,u}|_{u=0}\\
 \ge  \log  P[\tau_1> N]+ h \E^{\gb,\go,\cf}_{N,0}[ \cN_N(\tau)] \ge  h \gep_0 N^{\alpha}  \bbP^{\gb,\go,\cf}_{N,0}[A_{N,\gep_{0}}]- \alpha[\log N] -C.
\end{multline}
Taking  the expectation and using \eqref{deltouze} and \eqref{finitfree} we obtain that for some $N_0$ sufficiently large, for any $h>0$ and $N\ge N_0$ we have 
 
 \begin{equation}
  \tf(\gb,h) \ge  h  N^{\alpha-1}\gep_0 \delta -C' \frac{\log N}{N},
 \end{equation}
and we conclude by taking $N=C'' \left(h^{-1}|\log h|\right)^{\alpha^{-1}}$, for $C''$ sufficiently large.
\end{proof}

\subsection{Proof of Lemma \ref{simplifiex}}      \label{simplex}

 We have 
 \begin{multline}
 \bbE\bP^{\gb,\go,\cf}_{N,0}\left[A^{\cc}\right]= \bbE\left[ \frac 1 {Z^{\gb,\go,\cf}_{N,0}} \bE\left[A^{\cc}\prod_{i=1}^N(1+\gb \go_n\delta_n)\right] \right]\\
 \le 2 \bbE \bE\left[A^{\cc}\prod_{i=1}^N(1+\gb \go_n\delta_n)\right]+ \bbP\left[Z^{\gb,\go,\cf}_{N,0}< 1/2\right]
 \\=2\bP\left[A^{\cc}\right]+\left(1- \bbP\left[Z^{\gb,\go,\cf}_{N,0}\ge  1/2\right]\right).
  \end{multline}
  To conclude we use the following estimate for $\theta=1/2$.
  
     \begin{lemma}\label{superzygmund}
      If $X$ is a positive random variable and $p>1$, we have 
      \begin{equation}
       \bbP\left[X\ge \theta \bbE[X]\right]\ge (1-\theta)^{\frac{p}{p-1}}\frac{\bbE[X]^{\frac{p}{p-1}}}{\bbE[X^p]^{\frac{1}{p-1}}}.
      \end{equation}
\end{lemma}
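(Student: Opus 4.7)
The plan is to prove the stated bound via a standard Paley--Zygmund style argument, extending the usual $L^2$ version to general $L^p$ with $p>1$ by replacing Cauchy--Schwarz with Hölder's inequality. The two-moment inequality $\bbP[X\ge \theta \bbE[X]]\ge (1-\theta)^2 \bbE[X]^2/\bbE[X^2]$ corresponds to $p=2$, and the same strategy works verbatim once Hölder is used with the right conjugate exponent.

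The first step is to split the expectation of $X$ according to whether $X$ exceeds $\theta \bbE[X]$ or not:
\[
\bbE[X] = \bbE\left[X \, \ind_{\{X<\theta \bbE[X]\}}\right] + \bbE\left[X \, \ind_{\{X\ge \theta \bbE[X]\}}\right].
\]
The first summand is at most $\theta \bbE[X]$ (using positivity of $X$), which yields the lower bound
\[
(1-\theta) \bbE[X] \le \bbE\left[X \, \ind_{\{X\ge \theta \bbE[X]\}}\right].
\]

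The second step is to apply Hölder's inequality to the right-hand side with conjugate exponents $p$ and $q=p/(p-1)$:
\[
\bbE\left[X \, \ind_{\{X\ge \theta \bbE[X]\}}\right] \le \bbE[X^p]^{1/p}\, \bbP\left[X\ge \theta \bbE[X]\right]^{(p-1)/p}.
\]
Combining the two displays gives
\[
(1-\theta)\bbE[X] \le \bbE[X^p]^{1/p}\, \bbP\left[X\ge \theta \bbE[X]\right]^{(p-1)/p},
\]
and raising both sides to the power $p/(p-1)$ and rearranging yields the claimed inequality.

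There is essentially no obstacle: the argument is routine, and all that has to be verified is that the algebraic bookkeeping of exponents is correct. One minor sanity check worth doing is the degenerate case $\bbE[X^p]=\infty$, in which the inequality is trivial (the right-hand side is zero), and the case $\bbE[X]=0$, where $X=0$ a.s.\ and both sides vanish; otherwise everything is finite and the manipulation is legitimate. Note also that the hypothesis $p>1$ is used to ensure $q=p/(p-1)$ is finite, so that Hölder's inequality applies in the stated form.
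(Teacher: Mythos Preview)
Your proof is correct and follows exactly the same approach as the paper: split $\bbE[X]$ according to the event $\{X\ge \theta\bbE[X]\}$, bound the contribution from its complement by $\theta\bbE[X]$, and apply H\"older with exponents $p$ and $p/(p-1)$ to the remaining term. The paper's proof is a one-line version of what you wrote.
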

 Using the above result for $X= Z^{\gb,\go,\cf}_{N,0}$ and $\theta=1/2$ together with the assumption on $A$ this yields 
  \begin{equation}
    \bbE\bP^{\gb,\go,\cf}_{N,0}\left[A^{\cc}\right]\le 2\delta + 1 - 2^{-\frac{p}{p-1}}M^{-\frac{1}{p-1}} \le 1-\delta, 
   \end{equation}
provided $\delta$ is chosen sufficiently small.

\begin{proof}[Proof of Lemma \ref{superzygmund}]
   Using H\"older's inequality (with $p'=\frac{p}{p-1}$), we get 
      \begin{equation}
       \bbE[X]\le \theta\bbE[X] + \bbE\left[X\ind_{\{X\ge \theta\bbE[X]\}}\right]\le \theta\bbE[X] + (\bbE[X^p])^{1/p}(\bbP\left[X\ge \theta\bbE[X] \right])^{1/p'}.
      \end{equation}
    \end{proof}

\section{Bounding the fractional moments: Proof of Proposition \ref{fractionalmoment}}\label{fracof}
  
  \subsection{Decomposing the proof}
  
  By monotonicity of $p\mapsto \bbE[ (Z_N)^p]^{1/p}$,  it is sufficient to treat the case $p\in \left(\frac{1}{1-\alpha}, \gamma\right)$.
     We set $q=p-1$, and we assume during the whole proof that
    \begin{equation}\label{asumpq}
   \frac{\alpha}{1-\alpha}<q<\gamma-1.
    \end{equation}
While it is clear from the assumption \eqref{defgamma} that for $p\ge \gamma$ the moment of order $p$ is equal to infinity, 
it is not obvious at this stage why we also need a lower bound on $q$.
This will appear in the course of the proof when using coarse graining arguments.

\medskip

Our proof goes as follow, first (Section \ref{sizebias}) we rewrite the $p$ moment of the partition function as the $q$ 
moment of a different partition function involving an extra quenched copy of the renewal and a tilted environment. 
We also perform some partial annealing to simplify the expression which is obtained.

\medskip

In a second step (Section \ref{corsgrain}) we perform a decomposition of this new partition function, based on the classic inequality $(\sum a_i)^q\le \sum a^q_i$.
This helps us to reduce our proof to the estimate of (the $q$ moment of) the partition function of a system of finite size.

\medskip

Finally we use a change of measure technique (Section \ref{changameas}) to show that this last partition function is small. 
      
      \subsection{Rewriting the partition function using size-biasing} \label{sizebias}
      
    To simplify the quantity we have to bound, we decide to rewrite is as $\tilde \bbE_N \left[\left( Z_{N,0}^{\gb,\go,\cf}\right)^{q}\right]$,
    where $\tilde \bbP_N$ is the probability defined by
    \begin{equation}\label{deftildepn}
    \frac{\dd \tilde \bbP_N}{ \dd \bbP}(\go)= Z_{N,0}^{\gb,\go,\cf}.
    \end{equation}
    The partition function having expectation one under $\bbP$ it defines indeed a probability density.
    The reason for which this consideration might be useful is that more techniques are available to control $p$ moments for $p$ in the interval $(0,1)$ than 
    for $p$ in $(1,2)$ (we refer for example to the key role of the inequality \eqref{eq:subadt} in our proof). 
    A first important thing to do however, is to re-express $\tilde \bbP_N$ in a form which will be more adequate to perform computations.
    This representation of the sized biased measure which we present below, sometimes called "spine representation" in the case of branching structures (see 
    the recent monograph
    \cite{cf:Shi} and references therein) 
    is classical, and has been used several times in the framework of polymer measures (see e.g. \cite{cf:Bir}).
    
    \medskip

    Let $\tau'$ be an independent copy of the renewal $\tau$ (we  denote its law by $\bP'$). 
    We notice that, for a fixed realization of $\tau'$, the quantity $\prod_{n=1}^N (1 + \gb \go_{n} \delta_n)$ averages to one under 
       $\bbbP$, and thus can be considered as a probability density. Given a realization of $\tau'$, we introduce the probability measure $\bbP_{\tau',N}$ whose density with respect to 
        $\bbbP$ is given by
        \begin{equation}\label{souz}
         \bbP_{\tau',N}(d \go) = \prod_{n=1}^N (1 + \gb \go_{n} \ind_{\{n \in \tau'\}}) \bbbP(d \go). 
        \end{equation} 
     With these notations, we can write  
      \begin{equation}\label{siz}
       \bbE\left[\left(Z_{N,0}^{\gb,\go,\cf}\right)^{p}\right] = \bbE \bE'\left[\prod_{n=1}^N (1 + \gb \go_{n} \ind_{\{n \in \tau'\}}) 
       \left(Z_{N,0}^{\gb,\go,\cf}\right)^{q}\right]=\bE' \left[ \bbE_{\tau',N} \left[  \left(Z_{N,0}^{\gb,\go,\cf}\right)^{q}\right] \right].
      \end{equation} 
        
        Note that under $\bbP_{\tau',N}$, the random variables $\go_{n}$ are still independent but
        they are no longer identically distributed, since the law of $(\go_{n})_{n \in [1,N] \cap \tau'}$ has been tilted by the quantity $(1 + \gb \go_n)$.
        However we can construct an environment $\hat \go$ of law $\bbP_{\tau',N}$ using two IID environment as follows:
      \begin{itemize}
       \item   [(1)]    First we set $(\tilde \go_{n})_{n \geq 1}$ to be an IID tilted environment; namely, 
        all $\tilde \go_n$'s are IID, and 
        \begin{equation}\label{defitilt}
        \tilde \bbP[\tilde \go_1 \in \dd x]:=(1+\gb x) \bbP[\go_1 \in \dd x].
        \end{equation}

       \item [(2)]
        Given a realization of $\tau', \go, \tilde \go$, we define the sequence $\hat{\go}$  in 
          the following way
          \begin{equation}\label{defahah}
           \hat{\go}_{n} := \hat{\go}_{n}(\go,\tilde \go,\tau',N) = \go_{n} \ind_{\{n \notin \tau'\cap [1,N] \}} + \tilde \go_{n} \ind_{\{n \in \tau'\cap [1,N]\}}.
          \end{equation}
      \end{itemize}
  With this notation we have for every realization of $\tau'$ 
          $$\bbP_{\tau',N}\left[ \go \in \cdot \right]= \tilde \bbP \otimes \bbP\left[ \hat \go \in \cdot \right],$$
 and Fubini's identity yields
              \begin{equation*}
         \bE\left[\bbE_{\tau',N}\left[\left(Z_{N,0}^{\gb,\go,\cf}\right)^{q}\right]\right]  = 
                          \tilde\bbE \otimes  \bbE \otimes \bE'\left[\left(Z_{N,0}^{\gb,\go,\cf}\right)^{q}\right].           
     \end{equation*}
                   Using Jensen's inequality with respect to the measure $\bbE$ (recall that we have chosen $q\in(0,1)$): 
                   \begin{equation}
                      \tilde\bbE \otimes  \bbE \otimes \bE'\left[\left(Z_{N,0}^{\gb,\go,\cf}\right)^{q}\right]\le 
                       \tilde \bbE \bE' \left[ \left(  \bbE \left[Z_{N,0}^{\gb,\go,\cf}\right]\right)^{q} \right]
                      =  \tilde \bbE   \bE' \left[ (Z^{\gb,\tilde\go}_{N}[\tau'])^q \right]
                   \end{equation}
where, for a given realization of $\tau'$, we defined 
    $$Z^{\gb,\tilde\go}_{N}[\tau']:= \bE\left[ \prod_{n=1}^N(1+\gb \tilde \go \ind_{\{n\in\tau\cap \tau'\}}) \right].$$
   To conclude this section, let us thus record that we reduced the proof of Proposition \ref{fractionalmoment} to the proof of the  following statement.
    \begin{lemma}\label{conclouse}
    For any $q\in\left( \frac{\alpha}{1-\alpha}, \gamma-1 \right)$, and $\gb\le \gb_0(q)$ we have 
    \begin{equation}
     \sup_{N\ge 0} \tilde \bbE   \bE' \left[ (Z^{\gb,\tilde\go}_{N}[\tau'])^q \right]<\infty.
    \end{equation}
    \end{lemma}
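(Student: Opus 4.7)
\smallskip
\noindent\textbf{Proof plan.} The strategy combines a coarse graining of the system into blocks of size $L$ (to be tuned as a function of $\gb$), the subadditivity inequality $(\sum a_i)^q \le \sum a_i^q$ available since $q=p-1\in(0,1)$, and a change of measure adapted to the heavy-tailed environment. The size-biasing performed in Section \ref{sizebias} is what makes this viable: rather than having to control a moment of order $p>1$, we now face a concave $q$-moment of a random sum, which is amenable to such techniques.

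\smallskip
Partition $[1,N]$ into blocks $B_i=((i-1)L,iL]$. For each subset $\cJ$ of block indices, let $Z_\cJ[\tau',\tilde\go]$ denote the contribution to $Z_N^{\gb,\tilde\go}[\tau']$ coming from trajectories $\tau$ whose intersection with $\tau'$ lies exactly in the blocks indexed by $\cJ$. Subadditivity gives $(Z_N^{\gb,\tilde\go}[\tau'])^q \le \sum_\cJ Z_\cJ^q$, and the renewal Markov property of $\tau$ factorizes each $Z_\cJ$ into per-block partition functions tied together by jumps across unvisited runs; a jump over $m$ consecutive empty blocks contributes at most $u(mL)\sim C'_K(mL)^{\alpha-1}$ by Lemma \ref{lem:don}. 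Taking $\tilde\bbE\bE'[\cdot]$, and using independence of $\tilde\go$ across blocks together with the renewal structure of $\tau'$ (which, upon averaging the position of $\tau'$ in each visited block, produces an additional factor of $u$ on each gap), one obtains a bound of the shape
\begin{equation*}
\tilde\bbE\bE'\bigl[(Z_N^{\gb,\tilde\go}[\tau'])^q\bigr]\le \sum_{k\ge 0} A_L^k \sum_{1\le i_1<\cdots<i_k\le N/L}\prod_{j=1}^k\phi((i_j-i_{j-1})L),
\end{equation*}
with $A_L:=\tilde\bbE\bE'[(Z_L^{\gb,\tilde\go}[\tau'])^q]$ the basic per-block fractional moment and $\phi$ a gap factor whose tail is controlled by $u(\cdot)^{q+1}$. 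A short calculation shows that $\sum_m\phi(mL)$ converges precisely when $(q+1)(1-\alpha)>1$, that is $q>\alpha/(1-\alpha)$; this is the previously unexplained lower bound in \eqref{asumpq}. Under this summability the $k$-fold sum becomes geometric, and the whole right-hand side is finite uniformly in $N$ as soon as $A_L$ is small enough compared to an explicit polynomial factor in $L$.

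\smallskip
The main obstacle, and the genuinely novel ingredient, is to make $A_L$ small when $\gb$ is small, for an appropriate choice $L=L(\gb)$. The $L^2$-based change of measure used in the classical Gaussian setting is unavailable since $\tilde\go$ has only moments of order strictly less than $\gamma$. Instead, following the heuristic emphasized in the paper, we introduce a tilted law $\hat\bbP_L$ on $(\tilde\go_1,\ldots,\tilde\go_L)$ that softly penalizes the occurrence of atypically large values---say by a multiplicative reweighting discouraging $\max_{n\le L}\tilde\go_n$ from exceeding a threshold $T_L$. H\"older's inequality then splits
\begin{equation*}
A_L \le \Bigl(\tilde\bbE\Bigl[\bigl(\tfrac{\dd\tilde\bbP^{\otimes L}}{\dd\hat\bbP_L}\bigr)^{q/(1-q)}\Bigr]\Bigr)^{1-q} \hat\bbE\bE'\bigl[(Z_L^{\gb,\tilde\go}[\tau'])^q\bigr]
\end{equation*}
into a Radon--Nikodym cost and a modified partition function. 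Because of the stable tail $\bbP[\go\ge x]\sim C_\bbP x^{-\gamma}$, the cost of a mild penalization grows only polynomially in $L$, whereas the modified partition function shrinks faster thanks to the suppression of the rare large values of $\tilde\go$ that drove its inflation under $\tilde\bbP$. Balancing these two effects via an appropriate scaling $L=L(\gb)$ and threshold $T_L$ is the technical heart of the proof; it is also where the assumption $\alpha<1-\gamma^{-1}$ enters decisively, ensuring that the gain in the modified partition function dominates the polynomial cost and hence driving $A_L\to 0$ as $\gb\to 0$.
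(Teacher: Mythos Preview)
Your high-level strategy---coarse graining, subadditivity for $q\in(0,1)$, and a change of measure---matches the paper's. However, the execution diverges at two important points, and the decisive step is left too schematic to be a proof.

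\textbf{Coarse graining.} The paper does not partition $[1,N]$ into fixed blocks. It decomposes $Z^{\gb,\tilde\go}[\tau']$ according to the positions of the \emph{long excursions} (gaps $>L$) of $\tau\cap\tau'$: between consecutive long jumps sit constrained partition functions $Z^L_{[a,b]}[\tau']$ with all gaps $\le L$. After subadditivity, the Markov property for $\tau'$, and a Jensen step averaging over $\tau'$, the problem reduces to showing
\[
\Bigl(\sum_{m\ge L}u(m)^{q+1}\Bigr)\Bigl(\sum_{n\ge 0}u(n)^{1-q}\,\tilde\bbE\bigl[(\tilde Z_n^{L,\tilde\go})^q\bigr]\Bigr)<1,
\]
where $\tilde Z_n^{L,\tilde\go}$ is an explicit partition function for the truncated renewal $\tilde K^L$. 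The first factor tends to $0$ as $L\to\infty$ precisely because $q>\alpha/(1-\alpha)$; the second is shown to be \emph{bounded uniformly in $L$} (not small). Crucially $L$ is fixed first and $\gb_0$ chosen afterwards, the reverse of your ordering $L=L(\gb)$.

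\textbf{Change of measure.} This is where your proposal departs most sharply. You suggest a threshold-type penalization of large values of $\tilde\go$; that is indeed the device the paper uses in Section~\ref{rerel}, but for disorder \emph{relevance}, not here. For irrelevance the paper takes the specific tilt $g(\tilde\go_i)=(1+\gb\tilde\go_i)^{q-1}$, whose point is the algebraic identity
\[
\tilde\bbE\bigl[g(\tilde\go_1)^{-q/(1-q)}\bigr]=\tilde\bbE\bigl[g(\tilde\go_1)(1+\gb\tilde\go_1)\bigr]=\bbE\bigl[(1+\gb\go_1)^{1+q}\bigr],
\]
so the H\"older cost and the per-contact gain are the \emph{same} number, finite exactly when $q<\gamma-1$ and sent to $1$ as $\gb\to 0$ by dominated convergence. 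This is what yields the clean estimate $\tilde\bbE[(\tilde Z_n^{L,\tilde\go})^q]\le C\,u(n)^{2q}$ of Lemma~\ref{suZ}. Your threshold penalty offers no such cancellation, and you give no argument for why the ``balancing'' you announce would close; the claim that $\alpha<1-\gamma^{-1}$ enters decisively at that stage is also misplaced---in the paper it enters solely through the nonemptiness of the interval $(\alpha/(1-\alpha),\gamma-1)$ for $q$.
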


   \begin{rem} 
  We used Jensen inequality here to obtain a more tractable expression to estimate (in particular we are back with only one IID environment).
  A way to justify that this step does not make us lose much is that when disorder is irrelevant (which is what we aim to prove)
  the disorder is self-averaging.
  On the other hand,  we cannot simply use Jensen's inequality for $\tilde \bbE$ as $\tilde \go$ has infinite mean.
       Applying Jensen to $\bP'$ is also not optimal as in some cases, one could prove that 
         under our assumptions,  $\tilde \bbE\left[(\bE'[Z^{\gb,\tilde\go}_{N}[\tau']])^q \right]$ diverges.
            
           \end{rem}
           
           \subsection{Coarse graining} \label{corsgrain}

          Our idea  to estimate the $q$-th moment of the partition function $Z^{\gb,\tilde\go}_{N}[\tau']$ it to use a change of measure argument; namely, 
              given a positive function $G$ of $\go$, by H\"older's inqeuality we have
          \begin{equation}\label{davycrockett}
          \tilde \bbE \left[ (Z^{\gb,\tilde\go}_{N}[\tau'])^q \right]\le         \tilde \bbE \left[ (G(\go) Z^{\gb,\tilde\go}_{N}[\tau'])\right]^{q}  
          \tilde \bbE \left[ (G(\go))^{-\frac{q}{1-q}}\right]^{1-q}.  
          \end{equation}
          If we apply this inequality for  $G(\go)=\prod_{i=1}^N g(\go_i)$ with $\tilde \bbE[ g(\go_i) ]=1$, the first term 
          corresponds to the expectation of $\tilde Z$ 
          under a new measure for which the law of $\tilde \go$ has been changed and the second one can be interpreted as a cost for this change of measure.
          
          \medskip
          
          The problem in this approach is that the cost grows exponentially in the size of the system $N$ and the first term, which is always larger than 
          $\bP[\tau_1>N]\approx N^{-\alpha}$ cannot compensate for it. We want to apply this idea in a more subtle way by coupling it with a coarse graining argument. 
          To perform this we decompose the partition function in order to make sure that we apply our change of measure in regions that have a positive density of contacts
          involving the environment $\tilde \go$ (i.e contacts for $\tau\cap \tau'$).
          Hence we decompose $\tilde Z$ according to the location of the large jumps.
          Let us introduce a few notations in order to apply this decomposition.
          
          \medskip

            \noindent    Given  a realization of $\tau'$, for $b>a$ we set
                \begin{equation}\label{defuz}
                u(a,b,\tau')= \ind_{\{a,b \in \tau' \}} \bP[ b\in \tau \text{ and } \tau\cap \tau' \cap (a,b)=\emptyset \ | \ a \in \tau ],
                \end{equation}
                which is the probability that $b$ is the next point in $\tau \cap \tau'$ after $a$. Setting by convention $t_0:=0$,
                   we have 
      
      \begin{multline}
       Z^{\gb,\tilde\go}_{N}[\tau']=\sum_{k\ge 1} \sum_{0<t_1<\dots<t_k\le N} \prod_{i=1}^k u(t_{i-1},t_i,\tau')(1+\gb \tilde \go_{t_i})
       \bP[ (t_i,N]\cap \tau \cap \tau'=\emptyset]\\
       \le  \sum_{k\ge 1} \sum_{0<t_1<\dots<t_k} \prod_{i=1}^k u(t_{i-1},t_i,\tau')(1+\gb \tilde \go_{t_i})=:  
      Z^{\gb,\tilde\go}[\tau'].
       \end{multline}
       In the remaining part of this proof, we will be interested only in bounding the $q$-th moment of  $Z^{\gb,\tilde\go}[\tau']$.
             
         \medskip
         
      We want to express    $Z^{\gb,\tilde\go}[\tau']$ in terms of partition functions where all the gaps in $\tau \cap \tau'$ are smaller than $L$.
          For $L>0$ (meant to be large) and $b>a> 0$ all integers, we define  $Z_{[a,b]}^{L}[\tau']$ as
      the partition function on the interval  $[a,b]$ restricted to trajectories for which $\tau \cap \tau'$ has no gap larger than $L$, that is
\begin{equation}\label{defZll}
Z_{[a,b]}^{L}[\tau'] :=  (1+\gb \tilde \go_{a})\sum_{k\ge 1} \sumtwo{a=t_0<t_1<\dots<t_k=b}{t_{i}-t_{i-1}\le L} 
\prod_{i=1}^{k} u(t_{i-1},t_i,\tau')(1+\gb \tilde \go_{t_i}).
\end{equation}
By convention we set            
       \begin{equation}
 Z_{[a,a]}^{L}[\tau']  = (1 + \gb \tilde\go_{a})\ind_{\{a\in \tau'\}}.
        \end{equation} 
Note that $Z_{[a,b]}^{L}[\tau']=0$ if either $a\notin \tau'$, $b \notin \tau'$ or $\tau'_{i+1}-\tau'_i\ge L$ for some $i$ with $\tau_i\in[a,b)$
and that $Z_{[a,b]}^{L}[\tau'] > 0$ in all other cases.

        Decomposing according to the cardinality, the locations and the lengths of the excursions which are longer than $L$, we get the expression
    \begin{equation}
    (1 + \gb \tilde\go_{0}) Z^{\gb,\tilde \go}[\tau'] = \sum_{k\ge 1}\sum_{(\bt,\bt')\in \cT^{k}_L}
  \left(\prod_{i=1}^{k-1}  Z_{[t_{i},t'_i]}^{L}[\tau']u(t'_{i},t_{i+1},\tau')  \right) Z_{[t_{k},t'_k]}^{L}[\tau']. 
    \end{equation}  
         where
         \begin{equation}
          \cT^{k}_L:= \{ (\bt,\bt')\in \bbN^{2k} \ : \ t_0=0, \ \forall i\in \lint 1,  k \rint,  t'_i\ge t_i \text{ and }  t_{i+1}-t'_i> L  \}.
        \end{equation}
Hence using the inequality 
\begin{equation}\label{eq:subadt}
       \left( \sum_{i\in I} a_{i}\right)^{q} \leq \sum_{i\in I} a_{i}^{q}
       \end{equation}
       valid for any collection of positive numbers, and combining it 
       with the IID nature of the environment, we obtain
       \begin{multline}
      \tilde \bbE \left[ (1 + \gb \tilde\go_{0})^q (Z^{\gb,\tilde \go}[\tau'])^q\right]\\
       \le  \sum_{k\ge 1}\sum_{(\bt,\bt')\in \cT^{k}_L}
  \left(\prod_{i=1}^{k-1}  \tilde \bbE\left[\left(Z_{[t_{i},t'_i]}^{L}[\tau']\right)^q \right] u(t'_{i},t_{i+1},\tau')^q  \right) 
  \tilde\bbE\left[\left(Z_{[t_{k},t'_k]}^{L}[\tau']\right)^{q}\right].
       \end{multline}
       The terms in the sum are zero unless $t_i\in \tau'$ and $t'_i\in \tau'$ for all $i$. We use the spatial Markov property for $\tau'$ 
       and obtain that 
       \begin{multline}\label{lasomcomplexe}
             \bE'\tilde \bbE \left[ (1 + \gb \tilde\go_{0})^q (Z_{N}^{\gb,\tilde \go})^q[\tau']\right]
                     \le  \sum_{k\ge 1}\sum_{(\bt,\bt')\in \cT^{k}_L} \bP'\left[\forall i \in \lint 1,k\rint,\ t_i,t'_i\in \tau' \right] \\
 \times \left(\prod_{i=1}^{k-1} \bE' \tilde \bbE\left[\left(Z_{[t_{i},t'_i]}^{L}[\tau']\right)^q \ | \ t_i,t'_i \in \tau' \right] \bE'[ u(t'_{i},t_{i+1},\tau')^q \ | \  t'_i,t_{i+1} 
  \in \tau' ] \right) \\
  \times \tilde\bbE\left[\left(Z_{[t_{k},t'_k]}^{L}[\tau']\right)^q \ | \  t_k,t'_k \in \tau'  \right].
       \end{multline}
       Let us now observe that most of the terms in the above expression are translation invariant which will help for factorization.
       We have 
       \begin{multline}
         \bE' \tilde \bbE\left[\left(Z_{[t_{i},t'_i]}^{L}[\tau']\right)^q \ | \ t_i,t'_i \in \tau' \right]
         =  \bE' \tilde \bbE\left[\left(Z_{[0,t'_i-t_i]}^{L}[\tau']\right)^q \ | \ t'_i-t_i \in \tau' \right]\\
         \le  \tilde \bbE\left[\left( \bE'\left[Z_{[0,t'_i-t_i]}^{L}[\tau'] \ | \ t'_i-t_i \in \tau' \right] \right)^q \right],
                \end{multline}
                where we used Jensen's inequality in the last line.
          We can rewrite the last term as  
          \begin{equation}
          u(t'_i-t_i)^{-q}\tilde \bbE \left[ \left(\tilde Z^{L,\tilde \go}_{t'_i-t_i}\right)^{q}\right],
          \end{equation}
where  $\tilde Z^{L,\tilde \go}_{n}$ is the partition function associated to the (terminating) renewal $ \tilde \tau$ (with probability denoted by  $\tilde \bP^L$)
whose inter-arrival distribution is given by
\begin{multline}\label{deftildek}
  \tilde \bP^L[\tilde\tau_1=n]=\tilde K^L(n)\\
  :=\bP\otimes\bP'\left[ n\in\tau\cap \tau'\ ; \ (0,n)\cap\tau\cap \tau'=\emptyset \right] \ind_{\{n\le L\}}=:
  \tilde K(n)\ind_{\{n\le L\}}.
\end{multline}
We have
     \begin{equation}\label{oroko}
 \tilde Z^{L,\tilde \go}_{n}:= \tilde \bE^L \left[ \prod_{i=0}^n (1+ \gb \tilde \go_{i}\ind_{\{i \in \tilde \tau\}}) \ind_{\{n\in \tilde \tau\}}
 \right].
\end{equation}         
Regarding the contribution of the long jumps, we can ignore the constraint $\tau\cap \tau' \cap (a,b)=\emptyset$ in \eqref{defuz} and obtain that 
\begin{multline}
  \bE'[ u(t'_{i},t_{i+1},\tau')^q \ | \  t'_i,t_{i+1} 
  \in \tau' ]\\
  =\bE'[ u(0,t_{i+1}-t'_i,\tau')^q \ | \ t_{i+1}-t'_i\in \tau' ]\le u(t_{i+1}-t'_i)^q. 
\end{multline}
Factorizing $\bP'\left[\forall i \in \lint 1,k\rint,\ t_i,t'_i\in \tau' \right]$ and reorganizing the sum in the r.h.s.\ of \eqref{lasomcomplexe}, we have
       \begin{multline}\label{antes}
  \bE'\tilde \bbE \left[ (1 + \gb \tilde\go_{0})^q \left(Z_{N}^{\gb,\tilde \go}[\tau']\right)^q\right]\\
  \le \sum_{k\ge 1} \left(\sum_{m= L}^{\infty}  u(m)^{q+1} \right)^{k-1}\left(\sum_{n=0}^{\infty} u(n)^{1-q}
   \tilde \bbE \left[ \left(\tilde Z^{L,\tilde \go}_{n}\right)^{q}\right] \right)^k.
\end{multline}
Hence we can conclude the proof of Lemma \ref{conclouse} as soon as we can show that for some arbitrary $L$  
     \begin{equation}\label{plutiqueun}
\left(\sum_{m= L}^{\infty}  u(m)^{q+1} \right)\left(\sum_{n=0}^{\infty} u(n)^{1-q}
   \tilde \bbE \left[ \left(\tilde Z^{L,\tilde \go}_{n}\right)^q\right]\right) <1.
     \end{equation}
The first sum is easy to estimate considering Lemma \eqref{lem:don}. Using our assumption on $q$ \eqref{asumpq}, we have 
   \begin{equation}\label{loterm}
    \sum_{m= L}^{\infty} u(m)^{q+1}\le C L^{1-(1-\alpha)(q+1)},
   \end{equation}
   which can be made as small as we wish by choosing $L$ large.
  
\medskip

We need thus a uniform bound on the second sum. The idea is that for $\gb$ small, and ignoring the constraint of having no long jumps,
$\tilde Z^{L,\tilde \go}_{n}$ looks like the partition function of a pinning model in the 
delocalized phase, and thus it should be of order $P[n\in \tilde\tau]=u(n)^2$ (see \cite[Theorem 2.2]{cf:GB}).
Hence we should try to prove that  $  \tilde\bbE\left[ \left(\tilde Z^{L,\tilde \go}_{n}\right)^q \right]$ is of order $u(n)^{2q}$.
We prove the following result in the next section.
 
         \begin{lemma}\label{suZ}
     Given $q\le \gamma-1$,    
     there exist a constant $ C = C(q)$ and $\gb = \gb_0(L,q)$ such that for all $\gb\in (0,\gb_0)$ and 
        for all $n\ge 0$
     \begin{equation}\label{dabound}
          \tilde\bbE\left[\left(\tZ_{n}^{L,\tilde \go}\right)^{q}\right] \le  C u(n)^{2q}.           
     \end{equation}
\end{lemma}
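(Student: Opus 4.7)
The plan is to bound $\tilde\bbE\bigl[(\tZ_n^{L,\tilde\go})^q\bigr]$ by combining the concavity inequality $(\sum_i a_i)^q \le \sum_i a_i^q$ (valid for $q\in(0,1)$ and $a_i\ge 0$) with a Green's function estimate via the one-big-jump principle for terminating heavy-tailed renewals.

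First I expand $\tZ_n^{L,\tilde\go}$ from \eqref{defZll} over trajectories $(0=t_0<t_1<\cdots<t_k=n)$ of $\tilde\tau^L$ with $t_i - t_{i-1} \le L$, apply the concavity inequality term by term, and take $\tilde\bbE$ using the independence of the $\tilde\go_{t_i}$'s. Setting $A_\gb := \tilde\bbE[(1+\gb\tilde\go)^q] = \bbE[(1+\gb\go)^{q+1}]$ (finite since $q+1 < \gamma$), this produces
\begin{equation*}
\tilde\bbE\bigl[(\tZ_n^{L,\tilde\go})^q\bigr] \;\le\; A_\gb\,G_L(n),
\end{equation*}
where $G_L$ is the Green's function of the sub-probability renewal with inter-arrival $\bar K_L(m) := A_\gb\tilde K(m)^q\ind_{\{m\le L\}}$. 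A careful Taylor-type estimate using $\bbE[\go]=0$ and the tail $\bbP[\go>x]\sim C_\bbP x^{-\gamma}$ shows $A_\gb = 1 + O(\gb^\gamma)$ as $\gb \to 0$, so $A_\gb$ can be made arbitrarily close to $1$.

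To estimate $G_L(n)$ I use that, since $\alpha < 1-\gamma^{-1} < 1/2$, the intersection renewal $\tau\cap\tau'$ is transient (i.e.\ $\sum_n u(n)^2 < \infty$), and by the standard inversion formula for terminating renewals the first-return probability satisfies $\tilde K(m) \sim c'\,m^{2(\alpha-1)}$. Provided $q > 1/(2(1-\alpha))$, the series $\sum_m \tilde K(m)^q$ converges uniformly in $L$, and for $\gb$ small enough (depending on $L$) one can ensure $\bar\rho_L := A_\gb\sum_{m\le L}\tilde K(m)^q < 1-\eta$ uniformly in $L$. The one-big-jump principle for heavy-tailed terminating renewals then gives $G_L(n)\sim \bar K_L(n)/(1-\bar\rho_L)^2 \le C u(n)^{2q}$ for large $n$, while $G_L(n) \le 1/(1-\bar\rho_L)$ is bounded for small $n$. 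Combining these yields the desired uniform bound $\tilde\bbE[(\tZ_n^{L,\tilde\go})^q] \le C(q)\, u(n)^{2q}$.

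The main obstacle is the range $q \in \bigl(\alpha/(1-\alpha),\,1/(2(1-\alpha))\bigr]$, which is non-empty when $\gamma < 1 + 1/(2(1-\alpha))$: there $\sum_m \tilde K(m)^q$ diverges as $L\to\infty$, no choice of small $\gb$ makes $\bar\rho_L < 1$, and the Green's function estimate above breaks down. To cover that regime I would refine the argument using the change-of-measure technique announced in the introduction --- penalizing environments containing extremal values of $\tilde\go_i$, rather than large empirical means as in the $L^2$ case --- combined with H\"older's inequality with exponents $(1/q, 1/(1-q))$. Under the truncated environment the partition function behaves like its annealed counterpart and is comparable to $u(n)^{2q}$, while the entropy cost of the change of measure can be made small by tuning the truncation threshold against $\gb$ and $L$. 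Tracking this cost uniformly in $L$ is the delicate technical heart of the proof.
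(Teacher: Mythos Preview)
Your subadditivity route has a gap that arises even in the range you claim it works. After $(\sum a_i)^q \le \sum a_i^q$ you need the renewal with increment law $\bar K_L(m) = A_\gb\tilde K(m)^q\ind_{\{m\le L\}}$ to be terminating. But since $x^q > x$ for $x,q\in(0,1)$, one has $\sum_{m\le L}\tilde K(m)^q > \sum_{m\le L}\tilde K(m)$, and nothing forces this below $1$; sending $\gb\to 0$ only drives $A_\gb\to 1$, not the sum, so the claim $\bar\rho_L < 1-\eta$ is unjustified (and typically fails for the large $L$ required in \eqref{plutiqueun}). Separately, the one-big-jump asymptotic $G_L(n)\sim \bar K_L(n)/(1-\bar\rho_L)^2$ is vacuous for $n>L$, since $\bar K_L(n)=0$ there while $G_L(n)>0$: the truncated increment law is compactly supported, not heavy-tailed, so that principle does not apply.

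The paper takes a different route that sidesteps both issues. It applies H\"older as in \eqref{davycrockett} with the specific weight $g(\tilde\go_i) = (1+\gb\tilde\go_i)^{q-1}$, chosen so as to \emph{exactly undo the size-bias tilt}: both the cost $\tilde\bbE[g^{-q/(1-q)}]$ and the per-contact factor $\tilde\bbE[g\,(1+\gb\tilde\go)]$ collapse to $\bbE[(1+\gb\go)^{1+q}] =: 1+\gep$, with $\gep\to 0$ as $\gb\to 0$. The benefit term then becomes, up to a factor $(1+\gep)^{k+1}$, the \emph{untilted} sum $\sum_k\sum_{0=t_0<\dots<t_k=n,\ t_i-t_{i-1}\le L}\prod_i\tilde K(t_i-t_{i-1})$, which is bounded either by $u(n)^2$ (drop the jump constraint) or by $Ce^{-cn/L}$ (the constraint forces $k\ge n/L$ steps of a strict sub-probability). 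Taking $\gep = L^{-2}$ handles $n\le L^2$ and $n>L^2$ respectively. This is \emph{not} the extremal-value truncation you propose --- that device is used in Section~\ref{rerel} for the opposite (relevance) direction and would not produce the cancellation needed here.
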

        Recalling \eqref{asumpq} again, this result implies that
         \begin{equation}
       \sum_{n=0}^{\infty} u(n)^{1-q}
   \tilde \bbE \left[ \left(\tilde Z^{L,\tilde \go}_{n}\right)^q\right]\le  C \sum_{n \geq 0}
          u(n)^{1+q} \le C'.
         \end{equation} 
      Since the last constant $C'$ does not depend on $L$, we can combine this with \eqref{loterm} to prove \eqref{plutiqueun} for an adequate choice of $L$. 
      This concludes our proof of Lemma \ref{conclouse}.

       \subsection{Change of measure: the proof of Lemma \ref{suZ}} \label{changameas}

    The proof relies on the idea exposed at the beginning of the previous section.
    We use \eqref{davycrockett} for the partition function $\tZ_{[0,n]}^{L,\tilde \go}$ with 
      \begin{equation}
     G(\tilde\go):= \prod_{i=0}^n g(\tilde \go_i) \quad \text{where} \quad  g(\tilde \go_i):=(1+\gb\tilde\go_i)^{q-1}.
      \end{equation}
     Using  H\"older's inequality, we have
         \begin{equation}\label{cost}
          \tilde \bbE\left[\left(\tilde Z_{n}^{L,\tilde \go}\right)^{q}\right] \\ \leq 
          \left(\tilde \bbE\left[\prod_{i=1}^n g(\tilde \go_i)^{-\frac{q}{1-q}}\right] \right)^{1-q} \left(\tilde \bbE \left[\prod_{i=1}^n 
          g(\tilde\go_i)\tilde Z_{n}^{L,\tilde \go} \right] \right)^{q}.
            \end{equation} 
         Since the environment is IID, the first term is equal to
    \begin{equation}\label{cost2}
         \left(\tilde \bbE\left[g(\tilde \go_1)^{-\frac{q}{1-q}}\right]\right)^{(1-q)n},
     \end{equation}
 while the expectation in the second term is equal to (recall \eqref{deftildek})
\begin{equation}\label{benef}
 \sum_{k\ge 1} \sumtwo{0=t_0<\dots<t_k=n}{t_{i}-t_{i-1<L}} \tilde \bbE[g(\tilde\go_1)(1+\gb \tilde \go_1)]^{k+1} \prod_{i=1}^k \tilde K(t_i-t_{i-1}).
\end{equation}
Note that with our choice for $g$ and the definition \eqref{defitilt}, the terms integrated w.r.t. $\tilde \bbP$ appearing in \eqref{cost2} and \eqref{benef} are equal. Indeed 
   $$\tilde \bbE[g(\tilde \go_1)^{-\frac{q}{1-q}}]= \tilde \bbE\left[(1+\gb \tilde \go_1) g(\tilde \go_1)\right]=
   \bbE[ (1+\gb \go_1)^{1+q}].$$
Thanks to our assumption \eqref{asumpq}, this expectation is finite. 
 We also need it to be small and for this we will make use of the following immediate 
consequence of the Dominated Convergence Theorem.
         
             \begin{lemma}\label{chR}
             For any given $\gep > 0$, we can choose $\gb_{0}(\gep) > 0 $ such that for every $\gb \in (0,\gb_{0})$, we have 
              \begin{equation}\label{unifg}
             \bbE[ (1+\gb \go_1)^{1+q}] \leq (1+\gep).
                           \end{equation} 
              
           \end{lemma}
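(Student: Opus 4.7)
The plan is a direct application of the Dominated Convergence Theorem, so the proof should be essentially routine once the dominating function is identified. First I would observe that, since $\go_1\ge -1$ almost surely by \eqref{star}, for every $\gb\in(0,1/2)$ the random variable $1+\gb\go_1$ is bounded below by $1/2$ and we have the pointwise convergence
\begin{equation*}
    (1+\gb\go_1)^{1+q}\xrightarrow[\gb\to 0]{}1 \qquad \bbbP\text{-a.s.}
\end{equation*}

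Next I would produce a $\gb$-independent integrable upper bound. Splitting according to the sign of $\go_1$, for any $\gb\in(0,1)$:
\begin{equation*}
    (1+\gb\go_1)^{1+q}\;\le\; \ind_{\{\go_1<0\}}+\ind_{\{\go_1\ge 0\}}(1+\go_1)^{1+q}\;\le\;1+(\go_1^+)^{1+q},
\end{equation*}
where $\go_1^+ = \max(\go_1,0)$ and where I used that $t\mapsto(1+t)^{1+q}$ is nondecreasing on $[0,\infty)$ together with $\gb\le 1$ for the second estimate, and convexity/monotonicity-type elementary manipulations for the constant term. The point is that this dominating function does not depend on $\gb$.

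The key integrability check is that $\bbE[(\go_1^+)^{1+q}]<\infty$. This is exactly where the assumption \eqref{asumpq}, namely $q<\gamma-1$ (equivalently $1+q<\gamma$), is used: the tail assumption \eqref{defgamma} gives $\bbP[\go_1\ge x]\sim C_\bbP x^{-\gamma}$, hence all moments of order strictly less than $\gamma$ of $\go_1^+$ are finite. Consequently the dominating function $1+(\go_1^+)^{1+q}$ is integrable.

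Applying dominated convergence then yields $\lim_{\gb\to 0}\bbE[(1+\gb\go_1)^{1+q}]=1$, from which the existence of $\gb_0(\gep)>0$ such that \eqref{unifg} holds for all $\gb\in(0,\gb_0)$ is immediate. There is no genuine obstacle here; the only thing to be careful about is to invoke \eqref{asumpq} (and not just $q<1$) to ensure the $(1+q)$-th moment of $\go_1^+$ is finite.
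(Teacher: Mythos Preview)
Your approach is exactly the one indicated in the paper (which simply calls the lemma an ``immediate consequence of the Dominated Convergence Theorem''), and the overall structure is correct: pointwise convergence plus an integrable, $\gb$-independent majorant built from $(\go_1^+)^{1+q}$, whose integrability follows from $1+q<\gamma$ via \eqref{defgamma}.

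There is one small slip in your displayed chain of inequalities: the bound $(1+\go_1)^{1+q}\le 1+(\go_1^+)^{1+q}$ on $\{\go_1\ge 0\}$ goes the wrong way, since for $p=1+q>1$ one has $(a+b)^p\ge a^p+b^p$ for $a,b\ge 0$. A harmless fix is to use $(1+\go_1)^{1+q}\le 2^{q}\bigl(1+\go_1^{1+q}\bigr)$ (from $(a+b)^p\le 2^{p-1}(a^p+b^p)$), so that the dominating function becomes $2^{q}\bigl(1+(\go_1^+)^{1+q}\bigr)$, which is still integrable and independent of $\gb$. With this correction the argument is complete.
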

           In the following computations we choose $\gep=L^{-2}$.
           As a consequence of the above Lemma \ref{chR}, \eqref{cost} implies that for $\gb<\gb_0$ we have 
           \begin{multline}\label{cruize}
            \tilde \bbE\left[\left(\tilde Z_{n}^{L,\tilde \go}\right)^{q}\right]\le (1+\gep)^{(1-q)n}
            \left( \sum_{k\ge 1} \sumtwo{0=t_0<\dots<t_k=n}{t_{i}-t_{i-1<L}} (1+\gep)^{k+1}  \prod_{i=1}^k \tilde K(t_i-t_{i-1}) \right)^q\\
            \le  (1+\gep)^{(n+1)} \left( \sum_{k\ge 1} \sumtwo{0=t_0<\dots<t_k=n}{t_{i}-t_{i-1<L}}  \prod_{i=1}^k\tilde K(t_i-t_{i-1}) \right)^q.
           \end{multline}
Now  we fix 
$$c= -\log \left(\sum_{j=1}^{\infty} \tilde K(j)\right),$$ 
and we note that  
     \begin{multline}
 \sum_{k\ge 1} \sumtwo{0=t_0<\dots<t_k=n}{t_{i}-t_{i-1<L}} \prod_{i=1}^k \tilde K(t_i-t_{i-1}) \\
\le  \max \left( \sum_{k\ge n/L} \left(\sum_{j=1}^{\infty} \tilde K(j)\right)^k\ ,  \sum_{k\ge 1}\sum_{0=t_0<\dots<t_k=n}\prod_{i=1}^k \tilde K(t_i-t_{i-1}) \right).
\end{multline}
The second term in the $\max$ is equal to $\bP\otimes \bP' (n\in \tau \cap \tau')=u(n)^2$, 
and is a sufficient bound in the case where $n\le L^2$ as the pre-factor
 $(1+\gep)^{(n+1)}$ in \eqref{cruize} is bounded by $e$ in that case.
The first term is smaller than $(1-e^{-c})^{-1} e^{-cnL^{-1}}$ which together with \eqref{cruize} gives (cf. \eqref{doney})
\begin{equation}
  \tilde \bbE\left[\left(\tilde Z_{n}^{L,\tilde \go}\right)^{q}\right]\le (1-e^{-c})^{-1} (1+\gep)^{(n+1)} e^{-cnqL^{-1}}
  \le C u(n)^{2q}
  \end{equation}
for all $n\ge L^2$ provided that $L$ is chosen large enough.
 \qed

\section{Upper bound on the critical point shift}\label{upperboundcps}

In this section, we adapt the tools used in the proof of Theorem \ref{diso1} in order to obtain a lower bound on the free energy 
when $\alpha> 1- \gamma^{-1}$, which yields an upper bound on the critical point shift. More precisely we prove 
\begin{proposition}\label{dashift}
There exists a constant $C$ such that for every $\gb\in[0,1]$
\begin{equation}\label{dares}
h_c(\gb)\le \begin{cases} C \gb^{\gamma} \text{ if } \alpha\ge 1,\\
C |\log \gb| \gb^{\frac{\alpha\gamma}{1-\gamma(1-\alpha)}} \quad &\text{ if } \alpha\in(1/2,1),\\
                  C \gb^{\frac{\gamma}{2-\gamma}} |\log \gb|^{\frac{4-\gamma}{2(2-\gamma)}},  \quad &\text{ if } \alpha=1/2.
            \end{cases}
\end{equation}
If $\alpha\in (1-\gamma^{-1},1/2)$,
given $\delta>0$ there exists a constant $C_{\delta}>0$ such that for every $\gb\in[0,1]$
\begin{equation}
 h_c(\gb)\le C_\delta \gb^{\frac{\alpha\gamma}{1-\gamma(1-\alpha)}-\delta}.
 \end{equation}

\end{proposition}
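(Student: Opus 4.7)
The strategy is truncation of the environment combined with a moment method, following the template of \cite{cf:AZ08,cf:T08,cf:Lmart}. For $T>0$ define the capped environment $\go^{\leq T}_n := \go_n\wedge T$; since $\omega\mapsto 1+[e^h(\gb\omega+1)-1]\delta_n$ is nondecreasing on $[-1,\infty)$, we have $Z^{\gb,\go}_{N,h}\geq Z^{\gb,\go^{\leq T}}_{N,h}$ and hence $h_c(\gb)\leq h_c^{\leq T}(\gb)$. Writing $\go^{\leq T}_n = \bar\go^T_n+m(T)$ with $m(T):=\bbE[\go^{\leq T}_1] = -\Theta(T^{1-\gamma})$, the identity $e^h(1+\gb\go^{\leq T}_n) = e^{\tilde h}(1+\tilde\gb\bar\go^T_n)$, where $\tilde h := h+\log(1+\gb m(T))$ and $\tilde\gb := \gb/(1+\gb m(T))$, reduces the task to estimating the critical-point shift $h_c^{\mathrm{cent}}(\tilde\gb;T)$ of the centered bounded environment $\bar\go^T$, of variance $v(T):=\bbE[(\bar\go^T_1)^2] = \Theta(T^{2-\gamma})$, plus paying the truncation cost:
\begin{equation}
h_c(\gb)\leq h_c^{\mathrm{cent}}(\tilde\gb;T)+C\gb T^{1-\gamma}.
\end{equation}

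When $\alpha\geq 1/2$ I would apply the classical second moment method \cite{cf:T08,cf:AZ08}. Using that $\bbE[\bar\go^T_n]=0$ and $\bbE[(\bar\go^T_n)^2]=v(T)$, a direct expansion gives
\begin{equation}
\bbE\bigl[(Z^{\tilde\gb,\bar\go^T}_{N,h})^2\bigr]=\bE^{\otimes 2}\Bigl[e^{h(|\tau\cap [1,N]|+|\tau'\cap [1,N]|)}(1+\tilde\gb^2 v(T))^{|\tau\cap\tau'\cap [1,N]|}\delta_N\delta'_N\Bigr].
\end{equation}
The intersection renewal $\tau\cap\tau'$ has return exponent $2\alpha-1$ for $\alpha\in(1/2,1)$, is marginally recurrent at $\alpha=1/2$, and is a genuine renewal with positive density for $\alpha\geq 1$. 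In each sub-regime the standard analysis provides a threshold $h_0=h_0(\tilde\gb,v(T))$ above which the ratio $\bbE[Z^2]/(\bbE[Z])^2$ is uniformly bounded in $N$. Combined with Lemma \ref{superzygmund} and the finite-volume criterion \eqref{finitfree} this yields $\tf(\gb,h)>0$ for $h>h_0$, hence $h_c^{\mathrm{cent}}(\tilde\gb;T)\leq h_0$. The three sub-cases give $h_0$ of order $(\tilde\gb^2 v(T))^{\alpha/(2\alpha-1)}$ for $\alpha\in(1/2,1)$, of order $\tilde\gb^2 v(T)$ with a logarithmic correction matching the exponent $(4-\gamma)/(2(2-\gamma))$ for $\alpha=1/2$, and of order $\tilde\gb^2 v(T)$ for $\alpha\geq 1$.

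For $\alpha\in(1-\gamma^{-1},1/2)$ the intersection renewal is transient, the ratio above is bounded at $h=0$ as soon as $\tilde\gb^2 v(T)$ stays below a fixed constant, and this naive approach only yields the suboptimal bound $h_c\leq C\gb^{\gamma/(2-\gamma)}$. To reach the claimed exponent I instead perform an $h$-dependent fractional moment estimate on $Z^{\tilde\gb,\bar\go^T}_{N,h}$: fix $p\in(1,\gamma)$ and reuse the size-biasing, the coarse-graining on long jumps, and the change-of-measure architecture of Section \ref{fracof}, but now at pinning level $h>0$ (so that the tilted renewal inter-arrival decays exponentially and the inner renewal $\tilde\tau$ in Lemma \ref{suZ} becomes substantially recurrent) and with $\bar\go^T$ in place of $\go$ (Lemma \ref{chR} still applies since $\bar\go^T$ is bounded). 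Tracking explicitly the $(T,h,p)$ dependence of the constants in \eqref{loterm} and \eqref{plutiqueun}, one obtains an inequality of the form $h_c^{\mathrm{cent}}(\tilde\gb;T)\leq C_\delta(\tilde\gb^2 v(T))^{1-\delta/2}$ for any prescribed $\delta>0$, where the $\delta$-loss encodes the replacement of $p$ by $2$.

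In each regime one concludes by optimizing $T$ to balance the internal estimate against the truncation cost $\gb T^{1-\gamma}$: the choice $T=\gb^{-1}$ for $\alpha\geq 1$ yields $h_c\leq C\gb^\gamma$, $T\sim \gb^{-1/(1-\gamma(1-\alpha))}$ for $\alpha\in(1-\gamma^{-1},1)$ yields $\gb^{\alpha\gamma/(1-\gamma(1-\alpha))}$, and the marginal and fractional cases pick up the stated $|\log\gb|^\bullet$ and $\gb^{-\delta}$ corrections. I expect the principal obstacle to be the fractional-moment step for $\alpha\in(1-\gamma^{-1},1/2)$: the argument of Section \ref{fracof} was written at the homogeneous critical point $h=0$ and exploits the $\gamma$-stable tail of $\go$, so Lemma \ref{suZ} must be re-proved for a bounded disorder and for the $h$-tilted inter-arrival law while still delivering a summable $u(n)^{q+1}$-type bound in \eqref{plutiqueun}. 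Carrying out this bookkeeping without losing more than an arbitrarily small $\delta$ in the exponent is the delicate point, and explains both the $-\delta$ correction and why this case must be stated separately from the three others.
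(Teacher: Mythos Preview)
Your overall architecture---truncate the environment, apply a moment method to the truncated system, then balance against the truncation cost---is the paper's strategy, and your second-moment treatment of $\alpha\in[1/2,1)$ is essentially Lemma~\ref{secondmoment}. For $\alpha\ge 1$ the paper bypasses the second moment entirely and reads off $h_c(\gb)\le C\gb^\gamma$ directly from \eqref{hilde}, using $-\bbE[\log(1+\gb\go_1)]\sim C\gb^\gamma$; your route would work but is heavier.

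The real gap is in the case $\alpha\in(1-\gamma^{-1},1/2)$. Your claimed intermediate bound $h_c^{\mathrm{cent}}(\tilde\gb;T)\le C_\delta(\tilde\gb^2 v(T))^{1-\delta/2}$ does not depend on $\alpha$, and it cannot yield the exponent $\alpha\gamma/(1-\gamma(1-\alpha))$: the right-hand side is only informative when $\tilde\gb^2 v(T)\lesssim 1$, i.e.\ $T\lesssim\gb^{-2/(2-\gamma)}$, and in that range the truncation cost is already $\gtrsim\gb^{\gamma/(2-\gamma)}$---the naive bound you wanted to beat. The idea of running Section~\ref{fracof} ``at positive $h$ so that the tilted renewal decays exponentially'' is also a red herring. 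What the paper does is simpler: it works at $h=h_\gb$, the annealed critical point of the truncated system (equivalently $h=0$ after your centering), and observes that the argument of Section~\ref{fracof} goes through \emph{verbatim} once Lemma~\ref{chR} is replaced by the single-site bound \eqref{chR2}, namely $\bbE[(e^{h_\gb}(1+\gb\hat\go^\gb_1))^p]\le 1+\gep$. Since $\hat\go^\gb$ is bounded, the constraint $p<\gamma$ disappears and one may take $p=(1+\delta')/(1-\alpha)$; only the lower bound $p>1/(1-\alpha)$ from \eqref{loterm} is still required. Verifying \eqref{chR2} reduces to $\gb^p N_\gb^{(p-\gamma)/\gamma}\to 0$, which is precisely what fixes the scale $N_\gb$ in \eqref{defaN} and pushes the truncation well past the $L^2$ threshold. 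This gives $\sup_N\bbE[(Z^{\hat\go^\gb,\gb,\cf}_{N,h_\gb})^p]<\infty$ and hence $h_c(\gb)\le h_\gb$ directly; the $\delta$-loss comes from the gap between $p$ and $1/(1-\alpha)$ needed to make \eqref{chR2} hold. So the obstacle is not re-proving Lemma~\ref{suZ} at positive $h$, but the one-line single-site estimate \eqref{chR2} at the correct truncation scale.
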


\begin{rem}
While we tried to optimize the factor in front of $\gb$ power for $\alpha\ge 1/2$, we did not perform such an operation in the case
 $\alpha<1/2$, in order to keep the computation simple. In any case the correction needed would be worse than a logarithmic power.
 We believe that these corrections are artifacts of the proof, and that the asymptotic behavior of $h_c(\gb)$ should be given by a pure power of $\gb$ in most cases.
\end{rem}

The case $\alpha\ge 1$ is the easiest one: in that case the result follows directly from \eqref{hilde}.
Indeed it is a simple exercise to check that \eqref{defgamma} implies
\begin{equation}
 \bbE\left[ \log(1+\gb \go_n) \right] \stackrel{\gb\to 0}{\sim} - C_\bbP \gb^{\gamma} \left(\int_{0}^{\infty} \frac{x^{1-\gamma}}{1+x} \dd x\right). 
\end{equation}

 For the rest, we treat separately the two cases $\alpha \in (1- \gamma^{-1},1/2]$ and $\alpha\in (1/2,1)$, and use different methods for each.
 In both cases
we replace the environment $\go$ by a truncated version which ignores high values of $\go$.
In order to know where to perform the truncation we need to fix a referential size for the system.
For the rest of the computation we set 
\begin{equation}\label{defaN}
 N= N_{\gb}:= \begin{cases}
            c_1 \gb^{-\frac{\gamma}{1-\gamma(1-\alpha)}}, \quad &\text{ if } \alpha\in(1/2,1), \\
            c_1 \left(\gb^2|\log \gb| \right)^{-\frac{\gamma}{2-\gamma}},  \quad &\text{ if } \alpha=1/2,\\
             c_1 \gb^{-\frac{\gamma(1-\delta)}{1-\gamma(1-\alpha)}}, \quad &\text{ if } \alpha\in(1-\gamma^{-1},1/2),
       \end{cases}
\end{equation}
where $c_1$ is a small constant, and $\delta>0$ is chosen arbitrarily small.
Of course we have to consider the integer part of the above but we chose to omit it to simplify the notation.
Now we introduce $\hat \go^\gb$ given by 
\begin{equation}
 \hat \go^\gb_n:= \max(\go_n, N_\gb^{\gamma^{-1}}).
\end{equation}
The truncation level is chosen so that  $\hat \go^\gb_n$ is a reasonable approximation of $\go$ when restricted to the segment 
$[1,N_{\gb}]$; namely, with a positive probability (which does not depend on $\gb$) the two environments   coincide. We also set 
$$h_\gb:= -\log \bbE[1+\gb \hat \go^\gb].$$ 
The quantity $h_{\gb}$ is chosen in such a way that the partition function associated to the truncated environment has expected value one (for all values of $N$)
\begin{equation}
 \bbE[ Z^{\hat \go^\gb,\gb,\cf}_{N,h_\gb}]=1.
\end{equation}
Note that with our choice for $N_{\gb}$ we have 

\begin{equation}\label{hvalue}
h_\gb \sim \begin{cases} c'_{\alpha} \gb^{\frac{\alpha\gamma}{1-\gamma(1-\alpha)}}, \quad &\text{ if } \alpha\in (1/2,1), \\
            c'_{\alpha} \gb^{\frac{\gamma}{2-\gamma}} |\log \gb|^{\frac{\gamma-1}{2-\gamma}},  \quad &\text{ if } \alpha=1/2,\\
    c'_{\alpha} \gb^{\frac{\alpha\gamma}{1-\gamma(1-\alpha)}-\frac{\delta(\gamma-1)}{1-\gamma(1-\alpha)}}, \quad &\text{ if } \alpha\in(1-\gamma^{-1},1/2),
            \end{cases}
\end{equation}
where $c'_{\alpha}$ is a constant that depends on $c_1$ (in \eqref{defaN}) and $\alpha$.

The idea of our proof is to control either the second moment (if $\alpha\ge 1/2$) or the $p$-moment for some $p\in(1,2)$  (if $\alpha< 1/2$) 
of $Z^{\hat \go^{ \gb},\gb,\cf}_{N,h_{\gb}}$
in order to be able to apply the results of Section \ref{proofirel}.

\subsection{The case $\alpha \ge 1/2$}
 
The main thing we have to prove is that the second moment of the truncated partition function for the system of size $N_{\gb}$ is bounded. 
With this result at hand, we perform the same computation as in Section \ref{decompex} to conclude.

\begin{lemma}\label{secondmoment}
 If $c_1$ is chosen sufficiently small, we have
 \begin{equation}
 \sup_{\gb\in[0,1]} \bbE\left[ \left(Z^{\hat \go^{\gb},\gb,\cf}_{N_{\gb},h_\gb}\right)^2 \right]\le 2.
 \end{equation}
 \end{lemma}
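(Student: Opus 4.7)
\medskip

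\noindent The strategy is the classical second-moment computation for disordered pinning, in which the choices in \eqref{defaN} are tuned so that the effective per-contact weight times the intersection-renewal Green function becomes a small constant controlled by $c_1$. First, starting from the identity
$$Z^{\hat\go^\gb,\gb,\cf}_{N,h_\gb}=\bE\left[\prod_{n=1}^N\left(1+[e^{h_\gb}(1+\gb\hat\go^\gb_n)-1]\delta_n\right)\right]$$
analogous to \eqref{partfunc} and introducing an independent copy $\tau'$ of $\tau$ (with law $\bP'$ and increments $\delta'_n=\ind_{\{n\in\tau'\}}$), I would square, take the $\bbE$-expectation, and use Fubini together with the defining identity $\bbE[e^{h_\gb}(1+\gb\hat\go^\gb_n)]=1$ to cancel the linear cross terms. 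What remains is
$$\bbE\left[(Z^{\hat\go^\gb,\gb,\cf}_{N,h_\gb})^2\right]=\bE\otimes\bE'\left[\prod_{n=1}^N(1+\lambda_\gb\,\delta_n\delta'_n)\right], \qquad \lambda_\gb:=\var\bigl(e^{h_\gb}(1+\gb\hat\go^\gb_1)\bigr).$$

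\medskip

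\noindent The next step is to estimate $\lambda_\gb$. Writing $\lambda_\gb=\gb^2\var(\hat\go^\gb_1)/(\bbE[1+\gb\hat\go^\gb_1])^2$, the denominator stays of order $1$ uniformly in $\gb\in[0,1]$ since $\bbE[\go_1]=0$, $\bbE\hat\go^\gb_1=-\bbE[(\go_1-N_\gb^{1/\gamma})_+]=O(N_\gb^{-(\gamma-1)/\gamma})$ by \eqref{defgamma}, and $1+\gb\hat\go^\gb_1\ge 0$ by \eqref{star}. For the numerator, integrating by parts against $\bbP[\go_1\ge x]\sim C_\bbP x^{-\gamma}$ yields
$$\var(\hat\go^\gb_1)\leq \bbE[(\hat\go^\gb_1)^2]\sim \frac{C_\bbP\gamma}{2-\gamma}N_\gb^{(2-\gamma)/\gamma},$$
so that $\lambda_\gb\leq C\gb^2 N_\gb^{(2-\gamma)/\gamma}$ for a constant $C$ depending only on the law of $\go$.

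\medskip

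\noindent Expanding the product in the two-replica expression and using $\bP\otimes\bP'[\{n_1<\cdots<n_k\}\subset\tau\cap\tau']=\prod_{i=1}^k u(n_i-n_{i-1})^2$, together with the elementary bound $\sum_{0<n_1<\cdots<n_k\leq N}\prod_i u(n_i-n_{i-1})^2\leq\bigl(\sum_{n=1}^N u(n)^2\bigr)^k$, gives the geometric bound
$$\bbE\left[(Z^{\hat\go^\gb,\gb,\cf}_{N_\gb,h_\gb})^2\right]\leq \sum_{k\geq 0}\left(\lambda_\gb\sum_{n=1}^{N_\gb}u(n)^2\right)^{k},$$
which is $\leq 2$ as soon as $\lambda_\gb\sum_{n=1}^{N_\gb}u(n)^2\leq 1/2$. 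By Lemma~\ref{lem:don}, $\sum_{n=1}^{N}u(n)^2\asymp N^{2\alpha-1}$ when $\alpha\in(1/2,1)$ and $\asymp\log N$ when $\alpha=1/2$. A direct power-counting check using \eqref{defaN} shows that $\lambda_\gb\sum_{n=1}^{N_\gb}u(n)^2$ is asymptotically of order $c_1^{2(1-\gamma(1-\alpha))/\gamma}$ when $\alpha\in(1/2,1)$ and of order $c_1^{(2-\gamma)/\gamma}\cdot\tfrac{2\gamma}{2-\gamma}$ when $\alpha=1/2$; in both cases the $\gb$-dependence cancels out exactly by design of $N_\gb$, and choosing $c_1$ small enough yields the desired bound uniformly.

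\medskip

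\noindent The main delicate point I expect is the $\alpha=1/2$ marginal case: the logarithmic factor built into $N_\gb$ must precisely compensate the logarithmic divergence of $\sum u(n)^2$, and checking this requires a slightly sharper form of Lemma~\ref{lem:don} (namely that the asymptotic constant controls the partial sum up to a bounded factor) rather than the bare first-order asymptotics. Uniformity in $\gb$ down to $\gb=0$ is trivial since $Z^{\hat\go^0,0,\cf}_{N,0}=1$, so the whole argument amounts to a careful, but otherwise routine, bookkeeping of exponents.
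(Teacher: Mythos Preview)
Your proposal is correct and follows essentially the same strategy as the paper: both start from the two-replica identity expressing the second moment as the partition function of a homogeneous pinning model for the intersection renewal $\tilde\tau=\tau\cap\tau'$ with per-contact weight $e^{\chi}$ (the paper's $\chi=\log(1+\gb^2\bbE[(\hat\go^\gb_1)^2])$ is asymptotically the same as your $\log(1+\lambda_\gb)$, the discrepancy between second moment and variance being lower order), and both reduce the question to checking that $\chi\cdot D(N_\gb)$ is a small constant for small $c_1$, where $D(N)=\sum_{n=1}^N u(n)^2$.

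The only genuine difference is in how this reduction is carried out. You expand the product directly and use the elementary factorization bound to obtain the geometric series $\sum_{k\ge 0}(\lambda_\gb D(N_\gb))^k$; the paper instead quotes an inequality from \cite{cf:BL} that bounds the homogeneous partition function in terms of $\bP^{\otimes 2}[\tilde\tau_1\le N_\gb]$, and then invokes a Tauberian theorem from \cite{cf:BGT} to relate the tail $\bP^{\otimes 2}[\tilde\tau_1>N]$ back to $1/D(N)$. The final criterion is the same, but your route is more self-contained and avoids both external references. The paper's route, on the other hand, makes the connection with the recurrence properties of $\tilde\tau$ more explicit, which is conceptually pleasant but not needed here. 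Your remark about the $\alpha=1/2$ case is apt: the logarithmic tuning in \eqref{defaN} is there precisely so that $\lambda_\gb\cdot\log N_\gb$ stays of constant order, and your computation confirms this.
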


As $\bbE\left[ \left(Z^{\hat \go^{\gb},\gb,\cf}_{N_{\gb},h_\gb}\right)^2 \right]<M$  for some constant $M$ that only depends on the choice $c_1$,
we can apply Lemma \ref{simplifiex} for $p=2$ 
and prove exactly as we proved Proposition \ref{irrelevex}, that there exist constants $C$ and $\delta$ such that for every $u\ge 0$
\begin{equation}
 \tf(\gb,h_\gb+u)\ge \frac{1}{N_{\gb}} \bbE \left[ \log  Z^{\hat \go^\gb,\gb,\cf}_{N_{\gb},h_\gb+u} \right]-C \frac{\log N_\gb}{N_{\gb}}\\
 \ge \delta N_\gb^{\alpha-1}u-C'\frac{\log N_{\gb}}{N_{\gb}}.
\end{equation}
Hence  setting 
$$u_\gb:=C'' N_{\gb}^{-\alpha} \log N_{\gb},$$ for a sufficiently large constant $C''$ we conclude that   
$$\tf(\gb, h_\gb+u_\gb )>0 \text{ and } h_c(\gb)\le h_\gb+u_\gb.$$
As for any value of $\alpha\in(1/2,1)$, $u_\gb$ is of a larger order magnitude than $h_{\gb}$,
we conclude that \eqref{dares} holds by replacing $N_{\gb}$ by its value.

\begin{proof}[Proof of Lemma \ref{secondmoment}]

Using the definition of $h_\gb$ we readily obtain that 
\begin{equation}\label{partint}
 \bbE\left[ \left(Z^{\hat \go^{\gb},\gb,\cf}_{N_{\gb},h_\gb}\right)^2 \right]=
 \bE^{\otimes 2}\left[ e^{\chi\sum_{n=1}^{N_{\gb}} \ind_{\{n\in \tau^{(1)}\cap \tau^{(2)}\}}} \right],
\end{equation}
where 
$$\chi=\chi(\gb):= \log(1+\gb^2 \bbE[(\hat \go^\gb_n)^2])$$  
and
$\tau^{(i)}$, $i=1,2$ are two IID renewal processes with distribution $\bP$.
Hence the quantity appearing in \eqref{partint} is simply the partition function of the homogeneous pinning model associated to $\tilde \tau$. Note that
since $\alpha \ge  1/2$, the renewal process $\tilde \tau:=\tau^{(1)}\cap \tau^{(2)}$ is recurrent.

\medskip

\noindent Repeating the computations performed in \cite[Equations (6.24)-(6.27)]{cf:BL}, we obtain 
\begin{equation}
 \bbE\left[ \left(Z^{\hat \go^{\gb},\gb,\cf}_{N_{\gb},h_\gb}\right)^2 \right]
 \le 1+\chi \sum_{k=1}^{N_\gb}\exp\left(k \left[\chi+ \log\bP^{\otimes 2}[\tilde \tau_1\le N_{\gb}]\right]\right).
\end{equation}
This is uniformly bounded by $2$ if we can show that 
\begin{equation}\label{copachi}
\log\bP^{\otimes 2}[\tilde \tau_1\le N_{\gb}]\le -2\chi.
\end{equation}
We observe that the function
\begin{equation}
 D(N):=\sum_{n=1}^N \bP^{\otimes 2}[n \in \tilde \tau]=\sum_{n=1}^N u(n)^2
\end{equation}
is regularly varying: according to \eqref{doney} it asymptotically equivalent to either $\log N$ (if $\alpha=1/2$) or $N^{2\alpha-1}$ times a constant (if $\alpha\in (1/2,1)$).
Thus we can use \cite[Theorem 8.7.3]{cf:BGT} which implies that 
\begin{equation}
 \bP^{\otimes 2}[\tilde \tau_1> N] \stackrel{n\to \infty}{\sim} \frac{1}{\gG(2\alpha)\gG(2(1-\alpha)) D(N)}.
\end{equation}
This yields that for an appropriate constant which we do not wish to compute that 
\begin{equation}
-\log\bP^{\otimes 2}[\tilde \tau_1\le N_{\gb}]\stackrel{\gb\to 0}{\sim} \begin{cases}
                                                                         C(K) (\log N_\gb)^{-1}, \ga = 1/2\\
                                                                         C(K) N_\gb^{1-2\alpha}, \ga \in (1/2,1).
                                                                        \end{cases}
\end{equation}
On the other hand from \eqref{defgamma} we have 
\begin{equation}
\chi=\chi(\gb):= \log(1+\gb^2 \bbE[(\hat \go^\gb_n)^2])\stackrel{\gb\to 0}{\sim} \frac{C_{\bbP}}{2-\gamma} \gb^2 N_{\gb}^{2-\gamma}.
\end{equation}
Replacing $N_\gb$ by its value, we can check that \eqref{copachi} is satisfied for $\gb$ sufficiently small if $c_1$ in \eqref{defaN}
is chosen small enough.
 
\end{proof}

\begin{rem}
 When $\alpha<1/2$, the method exposed above would also provide an upper bound on $h_c(\gb)$.
 As in that case the intersection of two independent renewals is a terminating renewal, one would have to chose $N_{\gb}$ in a way such that 
 $\chi(\gb)$ remains bounded by a small constant.
 However, this would not yield  the right exponent for $h_c(\gb)$.
 \end{rem}

\subsection{The case $\alpha\in(1-\gamma^{-1},1/2)$}

In this other case we do not require the size of the system to be $N_{\gb}$.
The important statement to prove is the following.

\begin{lemma}\label{pmoment}
 If $\gb\le \gb_0$ and $c_1$ is chosen sufficiently small, there exists $p>1$ such that 
 \begin{equation}
\sup_{N\ge 0} \bbE\left[ \left(Z^{\hat \go^{\gb},\gb,\cf}_{N,h_\gb}\right)^{p} \right]<\infty.
 \end{equation}
 \end{lemma}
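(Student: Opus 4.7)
The plan is to adapt the three-step scheme of Proposition \ref{fractionalmoment}—size-biasing, coarse-graining, and change of measure—to the truncated and shifted setting. The truncation at level $N_\gb^{1/\gamma}$ renders every $p$-th moment of $\hat\go^\gb$ finite, while the centering $h_\gb$ enforces $\bbE[Z^{\hat\go^\gb,\gb,\cf}_{N,h_\gb}] = 1$, which is exactly what allows us to size-bias as in Section \ref{sizebias}. Fix $p=1+q$ with $q\in(\alpha/(1-\alpha),1)$, an interval which is non-empty precisely because $\alpha<1/2$. Introduce an independent copy $\tau'$ of the renewal and observe that, owing to $e^{h_\gb}\bbE[1+\gb\hat\go^\gb]=1$, the analog of \eqref{souz} still defines a probability measure on the environment for each fixed $\tau'$. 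Denoting by $\tilde\bbP$ the tilted one-point distribution with density $e^{h_\gb}(1+\gb x)\,d\bbP_{\hat\go^\gb}(x)$, the computation of Section \ref{sizebias}, followed by Jensen on the untilted coordinates, yields
\begin{equation*}
\bbE\bigl[(Z^{\hat\go^\gb,\gb,\cf}_{N,h_\gb})^p\bigr] \leq \tilde\bbE\bE'\bigl[(Z^{\gb,h_\gb,\tilde\go}_N[\tau'])^q\bigr],
\end{equation*}
and the coarse-graining of Section \ref{corsgrain} reduces matters to showing, for a conveniently chosen $L$,
\begin{equation*}
\Bigl(\sum_{m\geq L}u(m)^{q+1}\Bigr)\Bigl(\sum_{n\geq 0}u(n)^{1-q}\,\tilde\bbE\bigl[(\tilde Z^{L,\tilde\go,h_\gb}_n)^q\bigr]\Bigr)<1.
\end{equation*}

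The first sum is treated exactly as in \eqref{loterm}, using $q>\alpha/(1-\alpha)$ to make it arbitrarily small by taking $L$ large. The second sum I would bound via an analog of Lemma \ref{suZ}: for $c_1$ small in \eqref{defaN} and $\gb\leq\gb_0(L)$,
\begin{equation*}
\tilde\bbE\bigl[(\tilde Z^{L,\tilde\go,h_\gb}_n)^q\bigr] \leq C\, u(n)^{2q}
\end{equation*}
uniformly in $n$. The change of measure is the same as in \eqref{cost}, using $g(\hat\go)=(e^{h_\gb}(1+\gb\hat\go))^{q-1}$, and the cost per site becomes $\bbE[(e^{h_\gb}(1+\gb\hat\go^\gb))^{1+q}]$. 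Here the truncation is essential, because $1+q$ may now exceed $\gamma$. Writing $X=e^{h_\gb}(1+\gb\hat\go^\gb)-1$, the choice of $h_\gb$ gives $\bbE[X]=0$, and the elementary bound $(1+x)^{1+q}\leq 1+(1+q)x+C_q|x|^{1+q}$ (valid for $x\geq -1$ since $1<1+q<2$) yields
\begin{equation*}
\bbE\bigl[(e^{h_\gb}(1+\gb\hat\go^\gb))^{1+q}\bigr] \leq 1 + C_q\,\bbE\bigl[|X|^{1+q}\bigr] \leq 1 + C'\,\gb^{1+q}\,\bbE\bigl[|\hat\go^\gb|^{1+q}\bigr].
\end{equation*}
When $1+q>\gamma$ one has $\bbE[|\hat\go^\gb|^{1+q}]\leq CN_\gb^{(1+q-\gamma)/\gamma}$, and the specific choice of $N_\gb$ in \eqref{defaN} makes $\gb^{1+q}N_\gb^{(1+q-\gamma)/\gamma}$ a strictly positive power of $\gb$, hence bounded by $L^{-2}$ once $\gb$ is small enough. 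The remainder of the argument of Section \ref{changameas} (splitting according to whether the renewal $\tilde\tau$ has more or fewer than $n/L$ jumps) then carries over verbatim.

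The main obstacle is the tight interplay between the three scales $q$, $L$, and $N_\gb$. The coarse-graining forces $q>\alpha/(1-\alpha)$ in order to make $\sum_m u(m)^{q+1}$ summable, while the change-of-measure cost per site, of order $\gb^{1+q}N_\gb^{(1+q-\gamma)/\gamma}$ once $1+q>\gamma$, must remain small. For $\alpha>1-\gamma^{-1}$ these two constraints are genuinely in competition, and one cannot use $N_\gb=c_1\gb^{-\gamma/(1-\gamma(1-\alpha))}$ (the scale suggested by the annealed heuristic) without losing control of the change of measure. The factor $(1-\delta)$ in the exponent of \eqref{defaN} is precisely the slack needed to close the estimate, and it is the origin of the $\delta$-loss in the final bound on $h_c(\gb)$ stated in Proposition \ref{dashift}.
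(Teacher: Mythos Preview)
Your proposal is correct and follows exactly the paper's three-step scheme (size-biasing, coarse-graining, change of measure), reducing everything to the per-site estimate $\bbE[(e^{h_\gb}(1+\gb\hat\go^\gb))^{p}]\le 1+\gep$. One point needs tightening: you fix $q$ arbitrarily in $(\alpha/(1-\alpha),1)$ and then assert that $\gb^{1+q}N_\gb^{(1+q-\gamma)/\gamma}$ is a strictly positive power of $\gb$, but a direct computation shows the exponent is
\[
(1+q)-\frac{(1-\delta)(1+q-\gamma)}{1-\gamma(1-\alpha)},
\]
which vanishes at $(q,\delta)=(\alpha/(1-\alpha),0)$ and is \emph{decreasing} in $q$; it is positive only when $q-\alpha/(1-\alpha)$ is small relative to $\delta$. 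The paper handles this by setting $p=(1+\delta')/(1-\alpha)$ with $\delta'$ tied to $\delta$ via $\delta'(1-\alpha)\gamma\le[1-\gamma(1-\alpha)]\delta$; your closing paragraph shows you see the competition, but the body of the proof should make the dependence of $q$ on $\delta$ explicit. Your Taylor-type inequality $(1+x)^{1+q}\le 1+(1+q)x+C_q|x|^{1+q}$, which exploits the centering $\bbE[X]=0$, is a clean variant of the paper's subadditivity bound.
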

 
 Following the steps of Section \ref{decompex} we can deduce from Lemma \ref{pmoment} that for any $u>0$ we have 
 \begin{equation*}
  \tf(\gb,h_{\gb}+u)\ge   \frac{1}{N} \bbE \left[ \log  Z^{\hat \go^\gb,\gb,\cf}_{N_{\gb},h_\gb+u} \right]-C \frac{\log N}{N}
  \ge  \delta N_\gb^{\alpha-1}u-C'\frac{\log N_{\gb}}{N_{\gb}}.
 \end{equation*}
 Choosing $N$ sufficient large (depending on $u$), the r.h.s.\ becomes positive, which implies $h_c(\gb)\ge h_{\gb}$.
This completes the proof of Proposition \ref{dashift}.

 \begin{proof}[Proof of Lemma \ref{pmoment}]
 
 We choose 
 $$p=\frac{1+\delta'}{1-\alpha}<2,$$
  where $\delta'>0$ is to depend on $\delta$ (which enters in the definition of $h_{\gb}$) in a way which we  determine later.
  In view of \eqref{antes}, it is sufficient to show that for some $L\in \bbN$
\begin{equation}\label{eq:goooz}
  \left(\sum_{m\ge L} u(m)^{p} \right)\left( \sum_{n=0}^{\infty} u(n)^{2-p} 
   \tilde \bbE^{\gb}\left[ \left(\tilde Z^{L,\tilde \go^{\gb}}_{n,h_{\gb}} \right)^{p-1} \right]\right)<1,
\end{equation}
where $u(m)$ is defined in \eqref{doney} and
 the tilted measure $\tilde \bbP^{\gb}$ for the 
environment $\tilde \go^{\gb}$ is defined 
as a product measure on $\bbZ$ whose marginal law is 
\begin{equation}
 \tilde \bE^{\gb}[ \tilde \go^{\gb}\in \dd x]=e^{h_{\gb}}(1+\gb x)\bbP[\hat \go^{\gb}\in \dd x],
\end{equation}
and the partition function $\tilde Z^{L,\tilde \go^{\gb}}_{n,h_{\gb}}$ is similar to the one defined as in \eqref{oroko}, but including $h_{\gb}$. More 
explicitly its definition is the following 
\begin{equation}
\tilde Z^{L,\tilde \go^{\gb}}_{n,h_{\gb}}=\tilde \bE^L\left[ \prod_{i=0}^n \left( e^{h_{\gb}}\gb \tilde \go^{\gb}_i \ind_{\{i \in \tilde \tau^L\}} + \ind_{\{i \notin \tilde \tau^L\}} \right)\right].
\end{equation}
In that case, one deduces from Lemma \ref{lem:don} that the first sum \eqref{eq:goooz} is smaller than
\begin{equation}
  \sum_{m\ge L} u(m)^{p}\le C L^{-\delta'}
\end{equation}
which tends to zero when $L$ tends to infinity.
To control the second term we must prove that 
\begin{equation}
  \tilde \bbE^{\gb}\left[ \left(\tilde Z^{L,\tilde \go^{\gb}}_{n,h_{\gb}} \right)^{p-1} \right]\le C u(n)^{2(p-1)}.
\end{equation}
The key point  is to show that given $L$, $p$ and $\gep > 0$, for $\gb$ sufficiently small, we have
\begin{equation}\label{chR2}
 \bbE\left[ \left[e^{h_{\gb}}(1+\gb\hat \go^\gb_1)\right]^p\right]\le 1+\gep,
\end{equation}
as the rest follows exactly as in the proof of Lemma \ref{chR}.
First we notice that the $e^{h_{\gb}}$ term can be neglected as it tends to one.
As for the rest, as $p/2 < 1$ we have 
\begin{equation}
(1+\gb\hat \go^\gb_1)^p=(1+2\gb\hat \go^\gb_1+(\gb\hat \go^\gb_1)^2)^{p/2} \le 1+ (2 \gb |\hat\go^\gb_1|)^{p/2}+ \gb|\hat \go^\gb_1|^p
\end{equation}
The expectation of the second term is smaller than $(2\gb \bbE[|\hat \go_1|])^{p/2}$ and thus tends to zero when $\gb$ tends to $0$. 
As for the last term, using \eqref{defgamma} and replacing $N_\gb$ by its value, we obtain
\begin{equation}
\bbE\left[ | \gb\hat \go^\gb_1|^p\right]\le C\gb^p N_{\gb}^{\frac{p-\gamma}{\gamma}} =C \gb^{\frac{\delta}{1-\alpha}-\frac{\delta'\gamma}{1-\gamma(1-\alpha)}+\frac{\delta\delta'}{(1-\alpha)(1-\gamma(1-\alpha))}}.
\end{equation}
The above exponent is positive if $\delta'(1-\alpha)\gamma\le [1-\gamma(1-\alpha)]\delta$. 
This allows to conclude that \eqref{chR2} holds if $\gb$ is sufficiently small.

 \end{proof}

\section{Disorder relevance}\label{rerel}

\subsection{Reduction to a fractional moment bound}

The aim of this section is to prove that when $\alpha > 1-\gga^{-1}$, there is a critical point shift whose magnitude is of order 
$\gb^{\frac{\ga \gga}{(\ga-1)\gga +1}}$ up to $\log$ correction.
To prove such a statement we adapt the technique used in \cite{cf:DGLT07} which combines the use of a finite volume criterion 
(Proposition \ref{prop:derau}) and a change of measure argument.

\medskip

Although some refinements of this method involving a more delicate coarse graining procedure have been developed in the literature (introduced in 
\cite{cf:T09} and developed in  e.g.\ \cite{cf:GLT08, cf:BL})
and should yield a better estimate on  $h_c(\gb)$, a lighter proof seemed more appropriate here.
Let us set 

\begin{equation}\label{defhbet}
 h^{(2)}_{\gb}= \begin{cases} c_2  \left( \frac{\gb}{|\log \gb|+1} \right)^{\frac{\alpha\gamma}{1-\gamma(1-\alpha)}} \quad  &\text{ if }  \alpha \in (1-\gamma^{-1},1),\\
  c_2 \frac{\gb^{\gamma}}{(|\log \gb|+1)^{3\gamma-2}}, \quad &\text{ if } \alpha=1,\\
  c_2  \left( \frac{\gb}{(|\log \gb|+1)}\right)^{\gamma}, \quad &\text{ if } \alpha>1.
         \end{cases}
\end{equation}

The conclusion we obtain is the following.

\begin{proposition}\label{propint}
         If $\alpha > 1-\gamma^{-1}$, then  $h_c(\gb)>0$ for every $\gb \in(0,1)$.
         Moreover there exists a choice for the constant $c_2$ in \eqref{defhbet} such that for all $\gb$
       \begin{equation} \label{disrel}
h_c(\gb) \geq  h^{(2)}_{\gb}.
\end{equation} 
\end{proposition}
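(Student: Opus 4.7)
The plan is to prove that the free energy vanishes at $h = h^{(2)}_\gb$ by a fractional moment method. I would invoke the finite volume criterion alluded to before the statement (Proposition~\ref{prop:derau}) which, in the spirit of \cite{cf:DGLT07}, reduces the problem to exhibiting a scale $N = N(\gb)$ and an exponent $p \in (0,1)$ such that $\bbE[(Z^{\gb,\go}_{N,h^{(2)}_\gb})^p]$ is smaller than an absolute constant depending only on $p$ and the renewal statistics. By monotonicity of $h_c$ in $h$, a bound of this form for every $\gb$ yields the desired shift.

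The approach is to coarse-grain at the scale of the pure correlation length at $h^{(2)}_\gb$, namely $\ell \sim (h^{(2)}_\gb)^{-1/\ga}$ when $\ga < 1$, $\ell \sim (h^{(2)}_\gb)^{-1}$ when $\ga \geq 1$. Choosing $N$ a moderate (large) multiple of $\ell$, I would partition $[1,N]$ into blocks $B_1,\dots,B_{N/\ell}$ of length $\ell$ and use the subadditivity inequality $(\sum a_i)^p \leq \sum a_i^p$ (analogous to \eqref{eq:subadt}) to decompose $\bbE[(Z^{\gb,\go}_{N,h})^p]$ as a sum over ``contact patterns'' $I \subseteq \{1,\dots,N/\ell\}$ encoding the blocks visited by the renewal, each term involving a restricted partition function supported on blocks of $I$ and a factor penalizing the long jumps between consecutive blocks of $I$.

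The decisive step, flagged in the overview as the paper's main innovation in the relevant regime, is the choice of change of measure inside each block. Rather than tilting by the empirical mean of $\go$ as in the $L^2$ case, I would use the indicator $g_B(\go) = \ind\{\max_{n \in B} \go_n \leq T\}$ for a truncation threshold $T \sim \ell^{1/\gga}$ calibrated so that $\ell\, \bbP[\go > T] \asymp 1$. A H\"older splitting of each block factor produces (a) a cost $\bbE[g_B]^{-p/(1-p)}$ of order one per block under this calibration, and (b) a tilted partition function in which the environment has negative mean of order $-T^{1-\gga}$, yielding an effective reduction of $h$ by $\gb\, T^{1-\gga}$ per contact. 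The point is that under \eqref{defgamma} the dominant contribution to the second moment comes from \emph{single} large values of $\go$, so truncating the tail rather than shifting the mean is both cheaper and more efficient.

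To close the estimate one must balance $T$, $\ell$ and $p$ so that $\gb\, T^{1-\gga} \gtrsim h^{(2)}_\gb$, which with $T \sim \ell^{1/\gga}$ forces $\ell \sim (\gb/h^{(2)}_\gb)^{\gga/(\gga-1)}$; combined with the correlation-length condition $h^{(2)}_\gb \ell^\ga \asymp 1$ this recovers the exponent $\a\gga/(1-\gga(1-\a))$ of \eqref{defhbet} for $\ga < 1$, and the analogous exponent $\gga$ for $\ga \geq 1$, with the $|\log \gb|$ corrections absorbing both the suboptimality of the H\"older bound and the combinatorics of the contact patterns. The main obstacle I anticipate is controlling the sum over the contact patterns $I$: the prefactor generated by summing the block-to-block jumps (each bounded using $u(n)^{q+1}$ as in \eqref{antes}) is only summable thanks to Lemma \ref{lem:don} precisely when $\ga > 1 - \gga^{-1}$, which is exactly where the assumption of the proposition enters and where the fractional moment calculation must be matched carefully against the per-block Hölder cost.
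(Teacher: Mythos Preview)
Your strategic instinct --- penalize environments with large extremal values rather than large empirical means --- is exactly the paper's novelty. But the concrete change of measure you propose has a fatal flaw: with the hard indicator $g_B(\go) = \ind\{\max_{n\in B}\go_n \le T\}$, the H\"older cost is $\bbE\bigl[g_B^{-p/(1-p)}\bigr]^{1-p}$, not $\bbE[g_B]^{-p/(1-p)}$ as you wrote, and since $g_B$ vanishes on a set of positive probability this expectation is infinite. The paper avoids this by using a \emph{soft} penalty
\[
G(\go)=\exp\Bigl(-(\log k)^{-1}\sum_{i=1}^{k}\ind_{\{\go_i\ge k^{1/\gamma}\}}\Bigr),
\]
which is everywhere positive; the factor $(\log k)^{-1}$ is tuned so that, with the choice $\theta=1-(\log k)^{-1}$, the cost $\bbE[G^{-\theta/(1-\theta)}]$ stays bounded (Lemma~\ref{lem:cout}), while the induced tilt still shifts the effective mean of $\go$ by roughly $-(\log k)^{-1}k^{(1-\gamma)/\gamma}$, enough to dominate $h^{(2)}_\gb$ (Lemma~\ref{lem:benef}).

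Two further differences from the paper are worth noting. First, the paper does \emph{not} use the block coarse-graining with contact patterns you describe; it uses the simpler one-scale recursion of Proposition~\ref{prop:derau} directly, bounding $A_j=\bbE[(Z^{\gb,\go}_{j,h})^\theta]$ for all $j\le k$ by a single application of H\"older on the whole interval $[1,k]$ (see the remark just before \eqref{defhbet}). Your more elaborate decomposition could in principle yield sharper constants, but is not needed here. Second, your identification of where the hypothesis $\alpha>1-\gamma^{-1}$ enters is taken from the \emph{irrelevance} proof (the summability of $\sum u(n)^{q+1}$ in \eqref{antes}); in the relevance argument it enters instead through the balance in \eqref{titadouz}: one needs $\gb(\log k)^{-1}k^{(1-\gamma)/\gamma}$ to dominate $h$ with $k\sim h^{-1/\min(\alpha,1)}$, and solving for $h$ gives a positive power of $\gb$ precisely when $1-(1-\alpha)\gamma>0$.
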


 A first important observation is that in order to control the free energy, it is sufficient to control the growth of fractional moments of the partition function.
Indeed, we have, for any $\theta\in (0,1)$
\begin{equation}
 \bbE \left[ \log Z^{\gb,\go}_{N,h} \right]= \frac{1}{\theta}\bbE \left[ \log (Z^{\gb,\go}_{N,h})^{\theta} \right]  \le
 \frac{1}{\theta} \log  \bbE \left[ (Z^{\gb,\go}_{N,h})^{\theta} \right].
\end{equation}
Hence to show that $\tf(\gb,h^{(2)}_\gb)=0$, we only need to prove that  $A_{N} := \E[(Z_{N,h^{(2)}_\gb}^{\gb,\go})^{\theta}]$ is uniformly bounded (we set by convention $A_0=1$).
We will do so by using a bootstrapping  argument from \cite{cf:DGLT07} which shows that if the first $k$ values $(A_{1},\ldots,A_{k})$ are small enough then
$A_N$ is uniformly bounded. More precisely,  given $\gb\in (0,1)$, $h\in \bbR$, $k\in \bbN$ and $\theta\in(0,1)$, we set 
\begin{equation}\label{def:rho}
 \rho(\gb,h,k,\theta) := \bE\left[e^{\theta h}(1+\gb\go_1)^{\theta}\right] \sum_{n=k}^{\infty} \sum_{j=0}^{k-1} K(n-j)^{\theta} A_{j} 
 \end{equation}
 We have the following:      
        \begin{proposition}[Proposition 2.5 in \cite{cf:DGLT07}]\label{prop:derau}
  Given $h$ and $\gb \in (0,1)$, if we can find $k\in \bbN$ and $\theta\in (0,1)$ such that 
  $$\rho (\gb,h,k,\theta)\le 1,$$
 then $\tf(\gb,h) =0$. 
        \end{proposition}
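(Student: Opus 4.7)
The plan is to establish a fractional-moment renewal inequality for $A_N := \bbE[(Z^{\gb,\go}_{N,h})^{\theta}]$ (with the convention $A_0 = 1$) and then exploit it via a generating-function argument. For $N \ge k$ I would decompose $Z^{\gb,\go}_{N,h}$ by conditioning on the largest renewal point $j \in [0, k-1]$ and the first renewal point $n \in [k, N]$; both exist uniquely because $\{0, N\} \subset \tau$. This yields the exact identity
\begin{equation*}
Z^{\gb,\go}_{N,h} = \sum_{j=0}^{k-1} \sum_{n=k}^{N} Z^{\gb,\go}_{j,h}\, K(n-j)\, e^h(1+\gb\go_n)\, Z^{\gb, \theta_n\go}_{N-n, h},
\end{equation*}
where $\theta_n\go$ denotes the environment shifted by $n$. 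The three factors involve disjoint index sets of $\go$ ($[1, j]$, $\{n\}$, and $[n+1, N]$ respectively), so they are $\bbbP$-independent by the IID assumption on $\go$.

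Next I would apply the elementary subadditivity $(\sum a_i)^{\theta} \le \sum a_i^{\theta}$, valid for $\theta \in (0,1)$ and $a_i \ge 0$, take the $\bbbP$-expectation, and use translation invariance together with independence to obtain
\begin{equation*}
A_N \le \sum_{n=k}^{N} \phi(n)\, A_{N-n}, \qquad \phi(n) := \bbE[e^{\theta h}(1+\gb\go_1)^{\theta}] \sum_{j=0}^{k-1} A_j\, K(n-j)^{\theta}.
\end{equation*}
By the definition \eqref{def:rho}, the total mass of $\phi$ is exactly $\sum_{n \ge k} \phi(n) = \rho(\gb, h, k, \theta)$, which is assumed to be at most $1$.

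To conclude I would pass to the generating function $\tilde A(z) := \sum_{N \ge 0} A_N z^N$ for $z \in (0, 1)$. Multiplying the recursion by $z^N$, summing for $N \ge k$, and applying Fubini gives
\begin{equation*}
\tilde A(z) \le \frac{\sum_{N < k} A_N z^N}{1 - \sum_{n \ge k} \phi(n) z^n}.
\end{equation*}
Since $\phi$ is a nonnegative sequence of total mass $\rho \le 1$, for every $z \in (0,1)$ one has $\sum_{n \ge k} \phi(n) z^n < \rho \le 1$, so the denominator is strictly positive and $\tilde A(z) < \infty$. This forces $A_N = o(z^{-N})$ for every $z < 1$, hence $\limsup_N N^{-1} \log A_N \le 0$. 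Applying Jensen's inequality to the concave map $x \mapsto x^{\theta}$ yields $\theta\, \bbE[\log Z^{\gb,\go}_{N,h}] \le \log A_N$, so dividing by $\theta N$ and letting $N \to \infty$ gives $\tf(\gb, h) \le 0$; together with the standard non-negativity of $\tf$, this produces $\tf(\gb, h) = 0$.

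The main subtlety is the decomposition step: one must verify that the two-index sum over $(j, n)$ realises a genuine partition of paths with $\{0, N\} \subset \tau$ (each configuration contributing exactly once) and that the three factors are truly $\bbbP$-independent so that the expectation of the product factorises cleanly. The generating-function step is robust enough to handle the boundary case $\rho = 1$: even though $\tilde A(z)$ may then diverge as $z \to 1^-$ and $A_N$ need not be bounded, finiteness of $\tilde A(z)$ for every $z < 1$ already yields subexponential growth of $A_N$, which is all that is needed for $\tf(\gb, h) = 0$.
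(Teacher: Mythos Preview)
Your proof is correct and follows the same route sketched in the paper: decompose $Z^{\gb,\go}_{N,h}$ according to the last contact in $[0,k-1]$ and the first contact in $[k,N]$, apply the subadditivity inequality $(\sum a_i)^\theta \le \sum a_i^\theta$, and use independence to obtain the renewal inequality $A_N \le \sum_{n=k}^N \phi(n) A_{N-n}$ with $\sum_{n\ge k}\phi(n)=\rho$.

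The only (minor) difference is in the final step. The argument in \cite{cf:DGLT07} concludes more directly: setting $\bar A_N := \max_{0\le M\le N} A_M$, the recursion gives $A_N \le \rho\,\bar A_{N-k} \le \bar A_{N-1}$ for $N\ge k$, so $\bar A_N = \bar A_{k-1}$ for all $N$ and the sequence $(A_N)$ is actually \emph{bounded}. Your generating-function route yields only subexponential growth of $A_N$, which is weaker but entirely sufficient for the conclusion $\tf(\gb,h)=0$; it has the advantage of making the treatment of the boundary case $\rho=1$ transparent.
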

The proof is very short and relies on a decomposition of the partition function $Z^{\gb,\go}_{N,h}$ 
according to the position of the first contact point in the interval
$[N-k+2,N]$ and the last one in the interval $[0,N]$, and the use of \eqref{eq:subadt}.

\medskip

\noindent For the rest of this section, we fix $h=h^{(2)}_{\gb}$,
\begin{equation}\label{defkk}
  k=\begin{cases}  h^{-\frac{1}{\alpha}}, \quad &\text{ if } \alpha \in (1-\gamma^{-1},1),\\
   h^{-1} |\log h|^{-2}\quad &\text{ if } \alpha=1,\\
   h^{-1}  \quad &\text{ if } \alpha>1.\\
  \end{cases}
 \end{equation}
 and choose $\theta=1-(\log k)^{-1}$.
 
 \medskip
 
In this setup, to prove that $\rho$ is smaller than one, it is enough to show that for all $j<k$ $A_{j}$ 
is significantly smaller (that is by a large constant factor) than $u(j):=P[j\in \tau]$.
More precisely we need the following estimate

\begin{lemma}\label{fracmoment}
 There exists $\eta(c_2)>0$ which can be made arbitrarily small by choosing $c_2$ adequately such that
 for all $j\ge \eta k$
 \begin{equation}
  A_j\le  \eta u(j).
 \end{equation}
Moreover for $c_2$ sufficiently small we also have for all $j\le \eta k$
\begin{equation}
 A_j\le 2e^2 u(j).  
\end{equation}
\end{lemma}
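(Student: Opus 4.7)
\emph{Plan.} The proof splits according to whether $j$ is a small or a large fraction of the correlation length $k$ from \eqref{defkk}. In the easy regime $j\le\eta k$ a soft Jensen/annealed bound is enough; in the hard regime $j\ge\eta k$ we must resort to a change of measure, and in keeping with the novelty advertised earlier we choose one that penalises \emph{large values} of the environment rather than its empirical mean.

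\emph{Regime $j\le\eta k$.} Since $\theta\in(0,1)$, Jensen's inequality together with the annealing identity $\bbE[Z^{\gb,\go}_{j,h}]=Z_{j,h}$ yields
\[ A_j\le \bigl(Z_{j,h^{(2)}_\gb}\bigr)^\theta. \]
A standard renewal estimate based on Lemma \ref{lem:don} and the generating-function representation of $K$ gives $Z_{j,h}\le C\,u(j)\,e^{j\tf(h)}$. The whole point of the choice \eqref{defkk} is that $k\,\tf(h^{(2)}_\gb)=O(1)$, hence for $j\le\eta k$ we have $j\,\tf(h^{(2)}_\gb)=O(\eta)$ and thus $Z_{j,h^{(2)}_\gb}\le C\,u(j)$. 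Since $\theta=1-1/\log k$ and $|\log u(j)|\le (1-\ga)\log k+O(1)$ by Lemma \ref{lem:don}, one has $u(j)^\theta\le e\,u(j)$, and for $c_2$ small enough in \eqref{defhbet} the accumulated multiplicative constant stays under $2e^2$.

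\emph{Regime $j\ge\eta k$.} Set $M_j:=(j/\gep)^{1/\gamma}$ and $g(\go):=\ind{\{\max_{1\le n\le j}\go_n<M_j\}}$ for a small $\gep$. By union bound and \eqref{defgamma}, $\bbP[g=0]\le 2\gep$ once $j$ is large. H\"older's inequality with exponents $1/\theta,1/(1-\theta)$ gives
\[ A_j\le \bbP[g=1]^{-\theta}\,\bbE\bigl[g\,Z^{\gb,\go}_{j,h^{(2)}_\gb}\bigr]^\theta\le (1+3\gep)\,\bbE\bigl[g\,Z^{\gb,\go}_{j,h^{(2)}_\gb}\bigr]^\theta. \]
On $\{g=1\}$ we may replace every $\go_n$ by $\go_n\wedge M_j$ without changing $Z^{\gb,\go}_{j,h}$, and then drop the indicator:
\[ \bbE\bigl[g\,Z^{\gb,\go}_{j,h^{(2)}_\gb}\bigr]\le \bbE\bigl[Z^{\gb,\go\wedge M_j}_{j,h^{(2)}_\gb}\bigr]=Z_{j,\tilde h_j}, \]
where $\tilde h_j:=h^{(2)}_\gb+\log(1+\gb\,\bbE[\go\wedge M_j])$. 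From \eqref{defgamma} and $\bbE[\go]=0$ one has $\bbE[\go\wedge M_j]=-\bbE[(\go-M_j)_+]\sim -\tfrac{C_\bbP}{\gamma-1}\,M_j^{1-\gamma}$, so $\tilde h_j\sim h^{(2)}_\gb-c\,\gb\,j^{-(\gamma-1)/\gamma}$. The definition \eqref{defhbet} of $h^{(2)}_\gb$ is precisely calibrated so that, for $j\ge\eta k$ and $c_2$ small, $\gb\,j^{-(\gamma-1)/\gamma}$ dominates $h^{(2)}_\gb$ by a factor as large as desired. Thus $\tilde h_j<0$ with $j\,|\tilde h_j|^{\max(1,1/\ga)}$ arbitrarily large, and a renewal estimate of the form $Z_{N,-h'}\le u(N)\,\psi(Nh'^{\max(1,1/\ga)})$, with $\psi$ decaying at infinity, yields $Z_{j,\tilde h_j}\le \eta^{1/\theta}u(j)$, hence $A_j\le\eta u(j)$.

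\emph{Main obstacle.} The delicate points are twofold. First, one has to produce matching upper bounds on $Z_{N,h}$ for small $h$ of \emph{either} sign, uniformly across the three subcases $\ga\in(1-\gamma^{-1},1)$, $\ga=1$, and $\ga>1$: this is what forces the logarithmic corrections in \eqref{defhbet} and \eqref{defkk}. Second, one must check the scaling compatibility of the two constraints $M_j\ge j^{1/\gamma}$ (to keep the change-of-measure cost bounded) and $\gb\,M_j^{1-\gamma}\gg h^{(2)}_\gb$ (to obtain a genuine benefit from the tilt); this compatibility is precisely the relation $k\sim \gb^{-\gamma/(1-\gamma(1-\ga))}$, which explains the exponent appearing in \eqref{defhbet}.
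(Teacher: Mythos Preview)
Your change-of-measure step in the regime $j\ge\eta k$ contains a genuine gap. With the hard indicator $g(\go)=\ind\{\max_{1\le n\le j}\go_n<M_j\}$, the H\"older bound you write,
\[
A_j\le \bbP[g=1]^{-\theta}\,\bbE\bigl[g\,Z^{\gb,\go}_{j,h^{(2)}_\gb}\bigr]^\theta,
\]
is not a valid inequality. In the scheme \eqref{eq:holdit} the cost factor is $\bbE[G^{-\theta/(1-\theta)}]^{1-\theta}$; since $\theta/(1-\theta)=\log k-1$, taking $G=g$ an indicator makes this cost infinite on $\{g=0\}$. If instead you split $A_j=\bbE[Z^\theta g]+\bbE[Z^\theta(1-g)]$ and apply conditional Jensen, the first piece indeed gives $\bbE[gZ]^\theta\,\bbP[g=1]^{1-\theta}$, but you have no control whatsoever on the second: on $\{g=0\}$ some $\go_n$ exceeds $M_j$, which \emph{boosts} $Z$ rather than damps it, and $\bbE[Z^\theta\ind_{g=0}]$ has no reason to be small. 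This is precisely why the paper uses the \emph{soft} penalty
\[
G(\go)=\exp\Bigl(-(\log k)^{-1}\sum_{i=1}^k\ind_{\{\go_i\ge k^{1/\gamma}\}}\Bigr),
\]
whose $(\log k)^{-1}$ prefactor is calibrated so that raising $G^{-1}$ to the power $\theta/(1-\theta)\approx\log k$ leaves a bounded cost (Lemma~\ref{lem:cout}), while the tilt is still strong enough to drive the effective pinning parameter $\eta_2$ negative of the required order (Lemma~\ref{lem:benef}). Your scaling heuristic linking $M_j$, $k$ and $h^{(2)}_\gb$ is correct, but converting it into an inequality requires this soft penalization; a hard cutoff simply does not survive the exponent $\theta/(1-\theta)$.

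A secondary point: your treatment of the regime $j\le\eta k$ invokes the bound $Z_{j,h}\le C\,u(j)\,e^{j\tf(h)}$ uniformly in small $h>0$ and $j\le k$, which is not immediate (for fixed $h>0$ the quantity $e^{-j\tf(h)}Z_{j,h}$ converges to a positive constant, not to $u(j)$, so some uniformity argument across the scaling window is needed). The paper avoids this altogether by using the \emph{same} soft change of measure for all $j\le k$: once one knows that the shifted parameter $\eta_2$ in \eqref{groosa} is positive, the bound $\bbE[G(\go)Z^{\gb,\go}_{j,h}]\le u(j)$ is automatic, and $A_j\le 2e^2 u(j)$ follows from $u(j)^\theta\le e^2 u(j)$ via the choice $\theta=1-1/\log k$.
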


 \begin{proof}[Proof of Proposition \ref{prop:derau}]
  Let us focus first on the case $\alpha\in (1-\gamma^{-1},1)$ for simplicity
  By Jensen's inequality
  \begin{equation}
  \bE\left[(e^{h}(\gb \go_{1} +1))^{\theta}\right]\le e^{\theta h} \le 2,
  \end{equation}
provided constants are chosen adequately.
  Now using assumption \eqref{Kt}, the definition of $\theta$ and usual comparisons between sums and integrals, we get
  \begin{multline}
  \sum_{n=k}^{\infty} K(n-j)^{\theta}\le C \int_{k-j}^{\infty} x^{-(1+\alpha)\theta}\dd x 
  = \frac{C}{(1+\alpha)\theta-1} (k-j)^{1-(1+\alpha)\theta}\\
  \le C'\exp((1+\alpha)(1-\theta) \log (k-j)) (k-j)^{-\alpha} \le C''(k-j)^{-\alpha}.
  \end{multline}
  Thus, using Lemma \ref{fracmoment} and \eqref{doney}, we have
  \begin{equation}
 \rho \le C \sum_{j=0}^{k-1}  (k-j)^{-\alpha} A_i
 \le C  \eta \sum_{j=\eta k}^{k-1}  (k-j)^{-\alpha} j^{\alpha-1}+ C'  \sum_{j= 0}^{\eta k} j^{\alpha-1} (k-j)^{-\alpha}.
  \end{equation}
where in the last line we have used \eqref{doney}. The reader can check that the sum can be made arbitrarily small by choosing $\eta$ small.

\medskip

The proof in the case $\alpha\ge 1$ follows exactly the same line, we leave the details of the computation to the reader.

\end{proof}

\subsection{Proof of Lemma \ref{fracmoment}}

The idea is to use a change of measure similar to the one used in the proof  of Lemma \ref{chR}, that is  which penalizes
environments which make the partition function very large. 
since these might give a significant contribution to $\bbE[Z^{\gb,\go}_{i,h}]$, and not to $A_i$.
Given $G$ such a penalization function we have by H\"older's inequality

\begin{equation}\label{eq:holdit}
 A_j \le \bbE[G(\go)^{-\frac{\theta}{1-\theta}}]^{1-\theta} \left(\bbE[G(\go)Z^{\gb,\go}_{j,h}]\right)^{\theta}.
\end{equation}
We choose to use a change of measure that penalizes high values for $\go$. 
As the exponent $\frac{\theta}{1-\theta}$ is large, we cannot choose a very high penalization. Thus we set
\begin{equation}
 G(\go):= \exp\left(- (\log k)^{-1}\sum_{i=1}^k \ind_{\{\go_i\ge k^{\gamma^{-1}} \}}\right)=:\prod_{i=1}^k g(\go_i).
\end{equation}
We are going to prove the two following results

\begin{lemma}\label{lem:cout}
There exists a constant $C$ such that for any value of $k$,
\begin{equation}
  \bbE[G(\go)^{-\frac{\theta}{1-\theta}}]\le C.
 \end{equation}
\end{lemma}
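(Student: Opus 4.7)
The plan is to exploit the careful balance between the strength of the penalization $(\log k)^{-1}$ sitting inside $G$ and the exponent $\theta=1-(\log k)^{-1}$ appearing in the H\"older split. The point is that the change of measure is made mild enough that its $L^{\theta/(1-\theta)}$-cost remains bounded despite $\theta/(1-\theta)$ blowing up with $k$.

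First I would compute the exponent explicitly. With $\theta=1-(\log k)^{-1}$ one gets $\frac{\theta}{1-\theta}=\log k - 1$, hence
$$
G(\go)^{-\frac{\theta}{1-\theta}}
=\exp\!\left(\bigl(1-(\log k)^{-1}\bigr)\sum_{i=1}^k\ind_{\{\go_i\ge k^{\gamma^{-1}}\}}\right)
\;\le\;\exp\!\left(\sum_{i=1}^k\ind_{\{\go_i\ge k^{\gamma^{-1}}\}}\right).
$$
By the IID nature of the environment, the right-hand side factorizes: setting $p_k:=\bbP[\go_1\ge k^{\gamma^{-1}}]$, we have
$$
\bbE\!\left[G(\go)^{-\frac{\theta}{1-\theta}}\right]\;\le\;\bigl(1+(e-1)p_k\bigr)^{k}.
$$

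To conclude I would apply the tail estimate \eqref{defgamma}, which gives $p_k\sim C_{\bbP}\,k^{-1}$ as $k\to\infty$. Consequently,
$$
k\log\bigl(1+(e-1)p_k\bigr)\;\xrightarrow[k\to\infty]{}\;(e-1)C_{\bbP},
$$
so $(1+(e-1)p_k)^k$ is uniformly bounded for $k$ large. For the finitely many remaining small values of $k$ (where $\theta$ is defined), the bound is trivial since $G\ge e^{-1}$. This yields a uniform constant $C$ as required.

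There is no serious obstacle here: the lemma is essentially a verification that the parameters are tuned correctly. The only subtle point is recognizing that the truncation threshold $k^{\gamma^{-1}}$ was precisely chosen so that $\bbP[\go_1\ge k^{\gamma^{-1}}]$ is of order $1/k$, and that the dampening factor $(\log k)^{-1}$ inside $G$ compensates exactly the growth of $\theta/(1-\theta)$. The companion lemma (controlling $\bbE[G(\go)Z^{\gb,\go}_{j,h}]$), which I expect is stated next, is where the actual gain from the change of measure must be extracted and will be the genuinely difficult part of the fractional moment analysis.
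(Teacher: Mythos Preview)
Your proof is correct and essentially identical to the paper's: both bound $G(\go)^{-\theta/(1-\theta)}$ by $\exp\bigl(\sum_{i=1}^k \ind_{\{\go_i\ge k^{\gamma^{-1}}\}}\bigr)$, factorize by independence to get $(1+(e-1)p_k)^k$, and then use the tail assumption $p_k=O(k^{-1})$ to conclude. One tiny imprecision: your claim $G\ge e^{-1}$ is not literally true (the sum can reach $k$, giving $G\ge e^{-k/\log k}$), but this is irrelevant since your own bound $(1+(e-1)p_k)^k\le e^k$ already handles the finitely many small values of $k$.
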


\begin{lemma}\label{lem:benef}
There exists $\eta(c_2)$, which can be chosen arbitrarily small if $c_2$ is chosen adequately, such that 
\begin{equation}
 \bbE[G(\go)Z^{\gb,\go}_{j,h}]\le \left(\ind_{\{j< \eta k\}}+\eta\ind_{\{ j\ge \eta k \}} \right)  u(j).                               
\end{equation}
\end{lemma}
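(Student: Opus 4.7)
The strategy is to integrate out the environment and reduce the task to a homogeneous pinning computation in the delocalized phase. Since both $G(\go)=\prod_{i=1}^{k}g(\go_i)$ and $Z_{j,h}^{\gb,\go}$ factorize across the $\go$-coordinates, and since $\bbE[g(\go_1)]\le 1$ allows us to harmlessly discard the factors coming from indices in $(j,k]$ as well as from unvisited sites in $[1,j]$, Fubini yields
\begin{equation*}
\bbE\bigl[G(\go)\,Z_{j,h}^{\gb,\go}\bigr]\le \bE\bigl[\lambda^{\cN_j(\tau)}\,\delta_j\bigr],\qquad \lambda:=e^{h}\,\bbE\bigl[g(\go_1)(1+\gb\go_1)\bigr],
\end{equation*}
where $\cN_j(\tau)=\sum_{n=1}^{j}\delta_n$. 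The right-hand side is exactly $Z_{j,\log\lambda}^{0}$, the pure pinning partition function at the negative parameter $\log\lambda$.

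The next step is to quantify how much $\lambda$ sits below $1$. Using $\bbE[\go_1]=0$,
\begin{equation*}
\bbE\bigl[g(\go_1)(1+\gb\go_1)\bigr]=1-\bigl(1-e^{-1/\log k}\bigr)\,\bbE\bigl[(1+\gb\go_1)\,\ind_{\{\go_1\ge k^{1/\gamma}\}}\bigr].
\end{equation*}
The heavy-tail assumption \eqref{defgamma} and Karamata's theorem give $\bbE[\go_1\,\ind_{\{\go_1\ge k^{1/\gamma}\}}]\sim \tfrac{\gamma C_\bbP}{\gamma-1}\,k^{(1-\gamma)/\gamma}$, so the correction is of order $(\log k)^{-1}\,\gb\,k^{(1-\gamma)/\gamma}$. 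Substituting the scalings \eqref{defhbet}--\eqref{defkk} (whose logarithmic factors are calibrated precisely for this purpose), a direct computation yields
\begin{equation*}
\lambda\le 1-C\,h\,c_2^{-q_0}
\end{equation*}
for some exponent $q_0=q_0(\alpha,\gamma)>0$; hence $1-\lambda$ is substantially larger than $h$ once $c_2$ is small.

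For $j<\eta k$, the trivial bound $\lambda\le 1$ already gives $\bE[\lambda^{\cN_j(\tau)}\delta_j]\le u(j)$, which is the required estimate. For $j\ge\eta k$ we exploit $\log\lambda<0$ through the generating-function identity
\begin{equation*}
\sum_{j\ge 1}Z_{j,\log\lambda}^{0}\,x^{j}=\frac{\lambda\,\widehat{K}(x)}{1-\lambda\,\widehat{K}(x)},\qquad \widehat{K}(x):=\sum_{n\ge 1}K(n)\,x^{n},
\end{equation*}
which identifies $Z_{j,\log\lambda}^{0}$ with the renewal mass $u_\lambda(j)$ of the terminating renewal with step distribution $\lambda K$. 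The standard Tauberian asymptotic $u_\lambda(j)\sim \lambda(1-\lambda)^{-2}K(j)$ holds uniformly in the window $(1-\lambda)\,j^{\alpha}\to\infty$, and choosing $c_2$ small places every $j\ge\eta k$ comfortably inside this window (indeed $(1-\lambda)j^\alpha\ge \eta^\alpha c_2^{-q_0}$). Comparing with Lemma~\ref{lem:don} one obtains
\begin{equation*}
\frac{\bE[\lambda^{\cN_j(\tau)}\delta_j]}{u(j)}\lesssim \frac{K(j)}{(1-\lambda)^{2}\,u(j)}\lesssim \frac{c_2^{2q_0}}{\eta^{2\alpha}},
\end{equation*}
which is at most $\eta$ once $c_2$ is chosen small enough. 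The main obstacle is securing the uniformity of this Tauberian asymptotic as $\lambda\uparrow 1$ simultaneously with $j\to\infty$; this is a delicate but classical renewal-theoretic point, and it is precisely to buy this uniformity that the logarithmic corrections in the definition of $h^{(2)}_\gb$ and $k$ have been built in. The cases $\alpha=1$ and $\alpha>1$ follow the same three-step scheme with the appropriate modifications of the asymptotics for $K$ and $u$ in Lemma~\ref{lem:don}.
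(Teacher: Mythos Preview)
Your reduction to a homogeneous pinning computation is exactly the paper's first move: after integrating out the environment one obtains $\bE[\lambda^{\cN_j(\tau)}\delta_j]$ with the same $\lambda$, and the paper checks (as you do) that $1-\lambda$ exceeds $h$ by a large multiplicative factor when $c_2$ is small. The case $j<\eta k$ also matches.

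The divergence is in how you handle $j\ge\eta k$. The paper writes $\bE[\lambda^{\cN_j}\delta_j]=u(j)\,\bE[\lambda^{\cN_j}\mid j\in\tau]$ and bounds the conditional expectation by splitting on the event $\{\cN_j(\tau)\le\gep\, j^{\min(\alpha,1)}\}$ (with a log correction when $\alpha=1$): on the complement one gets $\exp(-\gep\,|\log\lambda|\,j^{\min(\alpha,1)})$, which is small because $(1-\lambda)j^{\min(\alpha,1)}$ is large, while the event itself has small conditional probability by an elementary renewal estimate (Lemma~\ref{nocontax}). This is completely self-contained and uses nothing beyond Doney's asymptotic for $u(\cdot)$.

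Your route via the asymptotic $u_\lambda(j)\sim\lambda(1-\lambda)^{-2}K(j)$ is heuristically correct and would give the same conclusion, but the \emph{uniformity} as $\lambda\uparrow 1$ with $(1-\lambda)j^{\alpha}\to\infty$ is not the fixed-$\lambda$ result in \cite{cf:GB}; it is a sharp-asymptotic statement in a shrinking delocalized window that you neither prove nor cite, and you yourself flag it as ``the main obstacle''. In particular, the remark that the logarithmic corrections in $h^{(2)}_\gb$ and $k$ were ``built in precisely to buy this uniformity'' is off: those logarithms are there to absorb the $(\log k)^{-1}$ arising from the mild penalization $g$ (so that the gain from the change of measure dominates $h$), not to rescue a Tauberian estimate. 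So as written, your argument has a gap at exactly the step you identify as delicate; the paper's conditioning-plus-concentration device shows how to close it without any Tauberian machinery, at the modest cost of the extra Lemma~\ref{nocontax}.
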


Lemma \ref{fracmoment} can be immediately deduced from \eqref{eq:holdit}, Lemma \ref{lem:cout} and Lemma \ref{lem:benef}.
Indeed from Lemma \ref{lem:cout}  the first term in \eqref{eq:holdit} can be bounded by $2$ if $k$ is sufficiently large (that is $c_2$ sufficiently small),
while the second term is smaller than 
$$\left(\ind_{\{j< \eta k\}}+\eta^{\theta}\ind_{\{ j\ge \eta k \}} \right)  u(j)^{\theta},$$
and we can conclude using the fact that if $k$ is sufficiently large, from \eqref{doney} and our choice for $\theta$, for all $j\le k$
$$u(j)^{\theta} \ge e^{2}u(j).$$

\begin{proof}[Proof of Lemma \ref{lem:cout}]
 Because of the assumption on the tail distribution of $\go$ we have 
 $$\bbP\left[ \go_j\ge k^{\gamma^{-1}} \right]\le C k^{-1}.$$
Hence
 \begin{equation}
  \bbE[G(\go)^{-\frac{\theta}{1-\theta}}]\le  \bbE\left[e^{\sum_{j=1}^k \ind_{\{\go_j\ge k^{\gamma^{-1}} \}}} \right]
  =\left( 1+(e-1) \bbP\left[ \go_j\ge k^{\gamma^{-1}} \right] \right)^k\le C'.
 \end{equation}
\end{proof}

\begin{proof}[Proof of Lemma \ref{lem:benef}]
Using the product structure of $G$, we have  
\begin{equation}\label{groosa}
  \bbE[G(\go)Z^{\gb,\go}_{j,h}]:= \bE \left[ \exp\left(-\sum_{i=1}^k\left(\eta_1 \ind_{\{n\notin\tau \cap[1,j]\}})+\eta_2 \ind_{\{i\in\tau\cap[1,j]\}} \right) \right)\right],
 \end{equation}
where 
\begin{equation}
 \eta_1:= -\log \bbE\left[g(\go_1)\right] \text{ and }  \eta_2:= -\log \bbE\left[g(\go_1)(1+\gb \go_1)\right]-h.
\end{equation}
Both $\eta_1$ and $\eta_2$ are positive for our choices of parameter (and this fact is sufficient to treat the case $j\le \eta k$). 
For $\eta_1$ this is obvious as $g(\go_1)\le 1$. 
Concerning $\eta_2$ we note that
\begin{equation}
  \bbE\left[g(\go_1)(1+\gb \go_1)\right]\le 1+ \gb \bbE[ g(\go_1)\go_1 ]
\end{equation}
and as $\bbE[\go_1]=0$, we have 
 \begin{multline}
  \bbE[ g(\go_1)\go_1 ]=   \bbE[ (g(\go_1)-1)\go_1 ] \\
  = \left(e^{-(\log k)^{-1}}-1\right)\bbE\left[\go_1 \ind_{\{\go_1\ge k^{\gamma^{-1}}\}}\right]\le 
  -c (\log k)^{-1} k^{\frac{1-\gamma}{\gamma}},
 \end{multline}
where $c$ is a positive constant that only depends on the distribution of $\go_1$.
Thus we have
\begin{equation}\label{titadouz}
 \eta_2\le c\gb (\log k)^{-1} k^{\frac{1-\gamma}{\gamma}}-h.
\end{equation}
Using our choice of parameters (recall \eqref{defkk}), we see that when $c_2$ is chosen sufficiently small, 
the r.h.s.\ in \eqref{titadouz} is negative and that $\eta_2\le -\tilde h$
\begin{equation}
 \tilde h(M,k)=\begin{cases} M k^{-\min(\alpha,1)} \quad & \text{ if } \alpha\ne 1,\\
                M k^{-1} (\log k)^{2} \quad & \text{ if } \alpha=1,
               \end{cases}
\end{equation}
where $M$ can be chosen arbitrarily large if $c_2$ is chosen sufficiently small.
Hence recalling \eqref{groosa}, we have 
\begin{equation}\label{groosa2}
  \bbE[G(\go)Z^{\gb,\go}_{j,h}]\le \bE\left[ e^{-\tilde h \sum_{i=1}^j \delta_i} \delta_j\right]
  = u(j)  \bE\left[ e^{-\tilde h \sum_{i=1}^j \delta_i} \ | \ j\in \tau\right].
\end{equation}
To conclude, we need to show that the r.h.s.\ can be made much smaller than $u(j)$ for all $j\ge \eta k$ if $M$ is chosen sufficiently large.
To this purpose it is sufficient to use the following result (proved below).
\begin{lemma}\label{nocontax}
 We set
 \begin{equation}
\cA(N,\gep)= \begin{cases} \left\{\cN_N(\tau) \le \gep N^{\min(\alpha,1)}\right\} \quad & \text{ if } \alpha \ne 1,\\
              \left\{\cN_N(\tau) \le \gep N (\log N)^{-2}\right\} \quad & \text{ if } \alpha=1.
             \end{cases}
\end{equation}
We have 
\begin{equation} 
 \lim_{\gep \to 0} \limsup_{N\to \infty} \bP\left[  \cA(N,\gep) \ | \ N\in \tau \right]=0,
\end{equation}
\end{lemma}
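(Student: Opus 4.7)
The plan is to apply Bayes' formula together with the elementary identity
\begin{equation*}
\bP[\cN_N(\tau)=k,\, N\in\tau] \;=\; \bP[\tau_k=N],
\end{equation*}
which rewrites the conditional probability as
\begin{equation*}
\bP[\cA(N,\gep)\mid N\in\tau] \;=\; \frac{1}{u(N)}\sum_{k=1}^{\lfloor \gep\phi(N)\rfloor} \bP[\tau_k = N],
\end{equation*}
where $\phi(N):=N^{\min(\alpha,1)}$ if $\alpha\neq 1$ and $\phi(N):=N(\log N)^{-2}$ if $\alpha=1$. The asymptotics of $u(N)$ are provided by Lemma \ref{lem:don}, so the whole problem reduces to controlling the sum of local probabilities.

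For $\alpha\in(0,1)$, I would invoke the big-jump principle for sums in the domain of attraction of an $\alpha$-stable law: there exists a constant $C$ such that for every $k$ and $N$ with $k\leq N^{\alpha}$,
\begin{equation*}
\bP[\tau_k=N] \;\leq\; C\, k\, K(N) \;\leq\; C'\, k\, N^{-(1+\alpha)}.
\end{equation*}
This is a classical consequence of Doney's local limit theorem, and may alternatively be obtained by a direct union bound, decomposing according to whether $\max_i S_i\geq N/2$ (contribution bounded by $kK(N/2)$ using regular variation) or all increments are smaller (handled by a standard truncation estimate, which is negligible in this regime). Summing this bound from $k=1$ to $\gep N^{\alpha}$ yields $O(\gep^{2} N^{\alpha-1})$, and after dividing by $u(N)\sim C'_K N^{\alpha-1}$ one obtains an $O(\gep^{2})$ bound, vanishing as $\gep\to 0$ uniformly in $N$. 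The case $\alpha=1$ is handled identically with $K(n)\sim C_K n^{-2}$ and $u(N)\sim C'_K(\log N)^{-1}$, and one ends up with a bound of order $\gep^{2}(\log N)^{-3}$, which even tends to $0$ as $N\to\infty$ for every fixed $\gep>0$.

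For $\alpha>1$, the renewal has finite mean $\mu:=\bE[\tau_1]$ and $u(N)\to 1/\mu>0$, so
\begin{equation*}
\bP[\cA(N,\gep)\mid N\in\tau] \;\leq\; \frac{\bP[\cN_N(\tau)\leq \gep N]}{u(N)}.
\end{equation*}
Since $\cN_N(\tau)/N\to 1/\mu$ almost surely by the strong law of large numbers applied to $\tau_{\cN_N}/\cN_N$, for every fixed $\gep<1/\mu$ the numerator tends to $0$ as $N\to\infty$, yielding the conclusion a fortiori.

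The only step that demands genuine care is the big-jump estimate $\bP[\tau_k=N]\leq C k K(N)$, uniformly for $k$ in the range $k\leq\gep\phi(N)$; although classical, one needs to verify that this range is indeed within the validity regime of Doney's local limit theorem, and that the constant can be chosen independently of $k$ and $N$ there. This is the one point I expect to require a minor amount of bookkeeping, but no new idea is needed.
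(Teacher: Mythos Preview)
Your proposal is correct. For $\alpha>1$ you argue exactly as the paper does (law of large numbers together with the renewal theorem giving $u(N)\to 1/\mu>0$). For $\alpha<1$ the paper simply invokes \cite[Proposition~A.2]{cf:DGLT07}, while you supply a self-contained argument via the one-big-jump estimate $\bP[\tau_k=N]\le C\,k\,K(N)$; both routes land on the same bound.

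The genuine divergence is at $\alpha=1$. Rather than appealing to a uniform local large-deviation estimate, the paper uses the inclusion $\{\cA(N,\gep),\,N\in\tau\}\subset\{\tau_m\ge N\}$ with $m=\lfloor\gep N(\log N)^{-2}\rfloor$, truncates each increment at level $N$ (harmless for this event), and then applies Markov's inequality via $\bE[\tau_1\ind_{\{\tau_1\le N\}}]\le C\log N$. This gives $\bP[\cA(N,\gep);\,N\in\tau]\le C\gep(\log N)^{-1}$ with no local-limit machinery whatsoever. Your approach is more unified across $\alpha\le 1$ and even produces a sharper $O(\gep^{2})$ dependence, but it rests on the uniform validity of the big-jump bound over the full range $k\le\gep\,\phi(N)$---precisely the bookkeeping step you flag. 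The paper's Markov-inequality trick for $\alpha=1$ sidesteps that issue entirely at the price of being tailored to that case.
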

and for $j\ge \eta k$,
 \begin{equation}
  \bE\left[ e^{-\tilde h \sum_{i=1}^j \delta_i} \ | \ j\in \tau\right]
  \le \bP\left[ \cA(j,\gep) \ | \ j\in \tau \right]+ e^{-\gep j^{\min(\alpha,1)}\tilde h }.
 \end{equation}
If we choose $\gep$ sufficiently small and $j$ sufficiently large, both terms can be made arbitrarily small (and the same argument works for $\alpha=1$).

\end{proof}

\begin{proof}[Proof of Lemma \ref{nocontax}]
 
 In the case $\alpha>1$, the result is a consequence of the law of large numbers, since the renewal theorem (recall \eqref{doney}) yields that  $N\in \tau$ has a probability
 bounded away from zero.
 The case $\alpha<1$ is a direct consequence \cite[Proposition A.2]{cf:DGLT07}. For the case $\alpha=1$, we note that 
 \begin{multline}\label{laprobab}
   \bP\left[  \cA(N,\gep) \ ; \ N\in \tau \right]= \bP\left[  \sum_{i=1}^{\gep N (\log N)^{-2}} (\tau_{i}-\tau_{i-1}) =  N\right] \\
   =   \bP\left[  \sum_{i=1}^{\gep N (\log N)^{-2}} (\tau_{i}-\tau_{i-1})\ind_{\{\tau_{i}-\tau_{i-1}\}\le N\}} =  N\right].
 \end{multline}
Now we notice that 
\begin{equation}
\bE\left[  \sum_{i=1}^{\gep N (\log N)^{-2}} (\tau_{i}-\tau_{i-1})\ind_{\{\tau_{i}-\tau_{i-1}\}\le N\}} \right]=\gep N (\log N)^{-2} \bE\left[ \tau_1 \ind_{\{\tau_1\le N\}}\right]
\le C \gep N\log N^{-1}.
\end{equation}
Hence the probability in \eqref{laprobab} is smaller than $C\gep \log N^{-1}$ from Markov's inequality, and we can conclude using \eqref{doney}.
\end{proof}

\bigskip
 
 {\bf Acknowledgements:} This work was initiated during a visit of J.S.\ at IMPA, he acknowledges hospitality and support.
 H.L. acknowledges support  of a productivity grant from CNPq.

\end{document}